\definecolor{violet}{rgb}{0.0,0.2,0.7}
\definecolor{rouge2}{rgb}{0.8,0.0,0.2}
\newcommand\mypagesizel{
\textwidth= 6.5in
\textheight=9in
\voffset-.55in
\hoffset -0.75in
\marginparwidth=56pt
}
\newcommand{\Alb}{\textup{Alb}}
\newcommand{\Pic}{\textup{Pic}}
\newcommand{\codim}{\textup{codim}}
\newcommand{\Hol}{\textup{Hol}}
\newcommand{\tr}{\textup{tr}}
\renewcommand{\phi}{\varphi}
\newcommand{\into}{\hookrightarrow}
\newcommand{\map}{\dashrightarrow}
\newcommand{\wt}{\widetilde}
\newcommand{\wh}{\widehat}
\newcommand{\wb}{\overline}
\renewcommand{\le}{\leqslant}
\renewcommand{\ge}{\geqslant}
\newcommand{\bP}{\mathbb{P}}
\newcommand{\sA}{\mathscr{A}}
\newcommand{\sB}{\mathscr{B}}
\newcommand{\sC}{\mathscr{C}}
\newcommand{\sE}{\mathscr{E}}
\newcommand{\sG}{\mathscr{G}}
\newcommand{\sL}{\mathscr{L}}
\newcommand{\sO}{\mathscr{O}}
\newcommand{\sT}{T}
\newcommand{\sX}{\mathscr{X}}
\newcommand{\sY}{\mathscr{Y}}
\newcommand{\sZ}{\mathscr{Z}}
\renewcommand{\P}{\mathbb{P}}
\newcommand{\vp}{\varphi}
\renewcommand{\ge}{\geqslant}
\renewcommand{\le}{\leqslant}
\newcommand{\om}{\omega}
\newcommand{\ddc}{dd^c}
\DeclareMathOperator{\reg}{reg}
\DeclareMathOperator{\Sp}{Sp}
\DeclareMathOperator{\SU}{SU}
\numberwithin{equation}{section}
\newcommand{\X}{\mathscr X}
\newcommand{\Xr}{X_{\reg}}
\renewcommand{\L}{\mathscr L}
\newtheorem{thm}{Theorem}[section]
\newtheorem*{thma}{Theorem A}
\newtheorem*{thmb}{Theorem B}
\newtheorem*{thmf}{Corollary C}
\newtheorem{lemma}[thm]{Lemma}
\newtheorem{cor}[thm]{Corollary}
\newtheorem{prop}[thm]{Proposition}
\newtheorem{conj}[thm]{Conjecture}
\newtheorem*{thm*}{Theorem}
\theoremstyle{definition}
\newtheorem{defn}[thm]{Definition}
\newtheorem{exmp}[thm]{Example}
\newtheorem{defn-thm}[thm]{Definition-Theorem} 
\newtheorem{defn-lemma}[thm]{Definition-Lemma}
\newtheorem{rem}[thm]{Remark}
\newtheorem{fact}[thm]{Fact}
\theoremstyle{remark}
\newtheorem*{not-and-def}{Notation and definitions}
\newtheorem{setup-and-notation}[thm]{Setup and Notation}
\newtheorem{additional-assumption}[thm]{Additional assumption}
\newtheorem{notation}[thm]{Notation}
\begin{document} 

\title[A decomposition theorem for smoothable varieties with trivial canonical class]{A decomposition theorem for smoothable varieties with trivial canonical class}

\author{St\'ephane \textsc{Druel}}
\address{St\'ephane Druel: Institut Fourier, UMR 5582 du CNRS, Universit\'e Grenoble Alpes, CS 40700, 38058 Grenoble cedex 9, France} 
\email{stephane.druel@univ-grenoble-alpes.fr}

\author{Henri Guenancia}
\address{Henri Guenancia: Department of Mathematics, 
Stony Brook University, Stony Brook, NY 11794-3651, United States}
\email{henri.guenancia@stonybrook.edu}

\subjclass[2010]{14J32, 14E30}

\begin{abstract}
In this paper we show that any smoothable complex projective variety, smooth in codimension two, with klt singularities and numerically trivial canonical class admits a finite cover, \'etale in codimension one, that decomposes as a product of an abelian variety, and singular analogues of irreducible Calabi-Yau and irreducible symplectic
varieties.
\end{abstract}

\date{\today}
\maketitle

{\small\tableofcontents}

\section{Introduction}

The Beauville-Bogomolov decomposition theorem asserts that any compact K\"ahler manifold with numerically trivial canonical bundle admits an \'etale cover that decomposes into a product of a torus, and irreducible,
simply-connected Calabi-Yau, and symplectic manifolds (see \cite{beauville83}). 

With the development of the minimal model program, 
it became clear that singularities arise as an inevitable part of higher dimensional life.
If X is any complex projective manifold with Kodaira dimension $\kappa(X) = 0$, standard
conjectures of the minimal model program predict the existence of a birational contraction
$X \map X_{\rm min}$, where $X_{\rm min}$ has terminal singularities and $K_{{X}_{\rm min}}\equiv 0$.
This makes it imperative to extend the Beauville-Bogomolov decomposition theorem to the singular setting.

In the singular setting, several notion of irreducible Calabi-Yau varieties or irreducible symplectic varieties have been proposed but the results of \cite{GKP} and \cite{GGK}
provide evidence that the following definition should be the correct one in view of a singular analogue of the Beauville-Bogomolov decomposition theorem.

\begin{defn}
\label{def:cy}
Let $X$ be a normal projective variety with canonical singularities such that $K_X \sim_{\mathbb Z} 0$.  
\begin{enumerate}
\item We call $X$ \emph{irreducible Calabi-Yau} if
$h^0 \bigl(Y, \Omega_{Y}^{[q]} \bigr) = 0$ for all numbers
$0 < q < \dim X$ and all finite covers $Y \to X$, \'etale in codimension one.
\item We call $X$ \emph{irreducible symplectic} if there
exists a holomorphic symplectic $2$-form
$\sigma \in H^0 \bigl(X, \Omega_X^{[2]} \bigr)$ such that for all finite étale
covers $f\colon Y \to X$, \'etale in codimension one, the exterior
algebra of global reflexive forms is generated by $f^{[*]}\sigma$.
\end{enumerate}
\end{defn}
\noindent Let $X$ be a normal projective variety of dimension at least $2$ with $K_X \equiv 0$ and klt singularities.
Suppose moreover that its tangent sheaf is strongly stable in the sense of Definition~\ref{def:strongStab}.
In \cite[Theorem C]{GGK}, it is proved that $X$ admits a finite cover, \'etale in codimension one, that is either an irreducible Calabi-Yau variety or an irreducible symplectic variety. That result, given the infinitesimal version of the Beauville-Bogomolov decomposition theorem proved in \cite{GKP}, is a strong indication that the notions of irreducible Calabi-Yau and symplectic varieties described in Definition~\ref{def:cy} should be the good ones. 

In \cite{bobo} the first author extends the Beauville-Bogomolov decomposition theorem to complex projective varieties of dimension at most five with klt singularities and numerically trivial canonical class.
The main result of our paper is the following decomposition theorem for smoothable mildly singular spaces with numerically trivial canonical class.

\begin{thma}\label{thm:main}
Let $X$ be a normal complex projective variety with klt singularities and smooth in codimension two. Suppose that $K_X \equiv 0$.
Suppose furthermore that there exists a flat projective holomorphic map with connected fibers $f \colon \sX \to \Delta$ from a 
normal analytic space $\sX$
onto the complex open unit disk $\Delta$ such that 
$X \cong f^{-1}(0)$ and such that $f^{-1}(t)$ is smooth for $t \neq 0$.
Then there exists a finite cover $Y \to X$, \'etale in codimension one, and a decomposition
of $Y$ into a product of an abelian variety and irreducible, Calabi-Yau and symplectic varieties.
\end{thma}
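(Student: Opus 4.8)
The plan is to combine the differential-geometric splitting of the tangent sheaf with an algebraic product-splitting in which the smoothability hypothesis plays the decisive role, and to close the argument by invoking \cite[Theorem C]{GGK} on the pieces. First I would use the infinitesimal Beauville--Bogomolov decomposition of \cite{GKP}: since $X$ is klt, smooth in codimension two and has $K_X\equiv 0$, it carries a singular Ricci-flat K\"ahler--Einstein metric $\omke$ on $\Xr$ (Eyssidieux--Guedj--Zeriahi), and the de Rham--Berger analysis of its holonomy, together with the algebraicity of holonomy-invariant reflexive subsheaves, yields a splitting
\begin{equation*}
\Tx \cong \O_X^{\oplus a} \oplus \bigoplus_{j} \sE_j \oplus \bigoplus_{k} \sG_k,
\end{equation*}
where $\O_X^{\oplus a}$ is the flat part and each summand $\sE_j$, $\sG_k$ is a strongly stable integrable foliation (Definition~\ref{def:strongStab}) with holonomy in $\SU$, respectively $\Sp$.

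Next I would dispose of the flat summand. A nontrivial flat factor forces a positive augmented irregularity, and the Albanese morphism of a suitable finite cover, \'etale in codimension one, splits off an abelian variety $A$; after replacing $X$ by such a cover I may assume $a=0$, i.e. $\wt{q}(X)=0$. It then remains to integrate the decomposition $\Tx=\bigoplus_i \sF_i$ into its strongly stable pieces and to produce a finite cover $Y\to X$, \'etale in codimension one, splitting as a product $Y\cong\prod_i Y_i$ with $\mathscr T_{Y_i}$ the pullback of $\sF_i$.

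This product-splitting is the main obstacle, and it is precisely here that smoothability intervenes. Each $\sF_i$ has $c_1=0$ and numerically trivial canonical class, but in the singular setting neither its algebraic integrability nor the existence of a product structure is automatic. I would therefore bring in the family $f\colon\sX\to\Delta$: one first checks that the nearby fibres $\sX_t$ ($t\neq 0$) are smooth projective varieties with numerically trivial canonical class, so the classical Beauville--Bogomolov theorem of \cite{beauville83} provides, fibrewise, a finite \'etale cover of $\sX_t$ splitting as a product of a torus, irreducible Calabi--Yau and irreducible holomorphic-symplectic manifolds. The strategy is to spread this fibrewise decomposition out to a finite cover of $\sX\smallsetminus\sX_0$, to realise it by foliations matching the holonomy foliations $\sF_i$ under specialisation, and then to let $t\to 0$: the relative Chow space of leaves extends across the central fibre, the resulting finite cover of the total space restricts to a cover $Y\to X$ \'etale in codimension one, and the product structure on the general fibre specialises to one on $Y$. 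Controlling this degeneration---keeping the cover \'etale in codimension one and the factors normal, klt and with numerically trivial canonical class as $t\to 0$---is the technical heart of the argument and the step I expect to be most delicate.

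Finally I would identify the factors. By construction each $Y_i$ has strongly stable tangent sheaf and numerically trivial canonical class, so \cite[Theorem C]{GGK} applies and gives, after one further finite cover \'etale in codimension one, that $Y_i$ is either an irreducible Calabi--Yau or an irreducible symplectic variety in the sense of Definition~\ref{def:cy}. Reassembling the abelian factor $A$ with these building blocks and amalgamating the successive finite covers into a single cover $Y\to X$, \'etale in codimension one, produces the desired decomposition of $Y$ as a product of an abelian variety and irreducible Calabi--Yau and symplectic varieties.
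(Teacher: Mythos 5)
Your overall architecture---split off the abelian part, spread the fibrewise Beauville--Bogomolov decomposition of the nearby smooth fibres out over $\Delta^*$, and specialise it to a quasi-\'etale product splitting of the central fibre---is essentially the route the paper takes to its Theorem~B (Proposition~\ref{prop:decomposition}). The genuine gap is in your final step: the claim that ``by construction each $Y_i$ has strongly stable tangent sheaf'' is false. What the degeneration argument actually yields (Proposition~\ref{prop:holonomy}) is only that $T_{Y_i}$ is \emph{stable}: strong stability does not pass to flat limits, because the (restricted) holonomy of the limiting Ricci-flat metric can collapse and $\pi_1^{\textup{\'et}}$ of the regular locus can jump. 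The paper's Section~\ref{sec:nonsstable} gives explicit counterexamples: the singular Kummer quartic $A/\langle \pm 1\rangle$ of Example~\ref{ex:kummer} is a flat projective limit of smooth, simply-connected, irreducible $K3$ surfaces, yet its tangent sheaf is stable and \emph{not} strongly stable (it trivialises on the quasi-\'etale cover $A$). For the same reason your proposed ``matching'' of the fibrewise factors with the holonomy foliations $\sF_i$ of the central fibre cannot work: in the Kummer degeneration the nearby fibre is a single irreducible $\SU(2)$-factor, while the GKP decomposition of a quasi-\'etale cover of the limit is flat, so the two decompositions do not correspond under specialisation. Integrating the strongly stable GKP foliations into a product splitting is exactly Conjecture~\ref{conj}, which is open.

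The tell-tale symptom is that your argument never uses the hypothesis that $X$ is smooth in codimension two, and that hypothesis is precisely what is needed to close the gap (it also supplies the $\mathbb{Q}$-Gorenstein property of the smoothing via Lemma~\ref{lemma:q_gorenstein}, which Theorem~A, unlike Theorem~B, does not assume). The paper's key additional ingredient is Theorem~\ref{thm:fundamental_group}: quasi-\'etale covers of the central fibre extend to quasi-\'etale covers of the whole formal deformation (Proposition~\ref{proposition:extension_deformation_q_etale_covers}, which requires $\codim(X \smallsetminus X_{\reg}) \ge 3$ and $\textup{depth} \ge 3$, in the spirit of Schlessinger's rigidity results); since the geometric generic fibre is simply connected, purity forces any such cover to be trivial, whence $\pi_1^{\textup{\'et}}\big(X_{\reg}\big)=\{1\}$. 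Only then does the stability of $T_X$ provided by the smoothing automatically upgrade to \emph{strong} stability, so that \cite[Theorem C]{GGK} (Theorem~\ref{thm:ggk}) applies. Your proof becomes correct if you replace ``strongly stable by construction'' with: reduce to a projective $\mathbb{Q}$-Gorenstein smoothing over an algebraic curve (Lemmas~\ref{lemma:smoothing_disk_versus_alg_curve2} and~\ref{lemma:q_gorenstein}), apply Proposition~\ref{prop:decomposition}, and then run Theorem~\ref{thm:fundamental_group} on each factor before invoking \cite{GGK}.
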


\begin{rem}\label{rem:schl}
The assumptions of Theorem A imply that $X$ does not have any quotient singularity. Indeed, a theorem of Schlessinger \cite[Theorem 2]{Sch71} (see also  \cite[Theorem 10.1]{artin_def_sing}) shows that a germ of a quotient singularity $(X,x)$ is rigid as soon as $\codim \overline{ \{x\} }\ge 3$. 
\end{rem}

In addition to the smoothability condition, the strategy of proof of Theorem~A requires us to assume that the $\codim(X\smallsetminus X_{\rm reg})\ge 3$. Let us briefly explain why. The idea of the proof is to consider a cover of the smooth generic fiber that splits off an abelian variety as well as irreducible, simply-connected Calabi-Yau and symplectic manifolds. A significant part of the paper is devoted to showing that one can take the flat limits of these irreducible pieces and recover the central fiber as product of those limits.

It is then tempting to believe that the flat limit $X$ of irreducible and simply-connected, Calabi-Yau or symplectic manifolds 
admits a finite cover, \'etale in codimension one, which is an irreducible Calabi-Yau or symplectic variety. Unfortunately, this turns out to be false in general as we explain in Section~\ref{sec:nonsstable}. This makes it much harder to use the smoothability assumption in order to prove a decomposition theorem in full generality. 
However, we are able to prove that $T_X$ is stable, see Theorem~B below. 
To conclude the proof of Theorem~A, we show that, in the setting of Theorem~A, we must have $\pi_1^{\textup{\'et}}\big(X_{\textup{reg}}\big)=\{1\}$
(see Theorem~\ref{thm:fundamental_group}).
Note that in Theorem~A, we do not require $\sX$ to be $\mathbb Q$-Gorenstein as this condition is automatically satisfied, see Lemma~\ref{lemma:q_gorenstein}.

As we explained above, it seems difficult to obtain a full decomposition theorem using our strategy without further assumptions on the singularities of $X$. However, we are still able to produce a splitting of some quasi-étale cover of $X$ where each non-abelian factor has a stable tangent bundle, and possesses the same algebra of reflexive holomorphic forms as the one of an irreducible, simply-connected, Calabi-Yau or symplectic manifold of the same dimension.

\begin{thmb}
\label{thm:main2}
Let $X$ be a normal complex projective variety with klt singularities. Suppose that $K_X \equiv 0$.
Suppose furthermore that there exists a projective morphism with connected fibers $f \colon \sX \to \Delta$ from a 
normal analytic space $\sX$ whose canonical divisor $K_\sX$ is $\mathbb{Q}$-Cartier
onto the complex open unit disk $\Delta$ such that 
$X \cong f^{-1}(0)$ and such that $f^{-1}(t)$ is smooth for $t \neq 0$.
Then, there exists an abelian variety $A$
as well as a projective variety $X'$ with canonical
singularities, a finite cover $$A \times X' \to X,$$ \'etale in
codimension one, and a decomposition
$$X' \cong \prod_{i\in I} Y_i \times \prod_{j\in J} Z_j$$
of $X'$ into normal projective varieties with trivial canonical class, such that the following holds.

\begin{enumerate}
\item The sheaf $T_{Y_i}$ is stable with respect to any polarization and one has $h^0 \big( Y_i, \Omega_{ Y_i}^{[q]} \big) = 0$ for all numbers $0 < q < \dim Y_i$.
\item The sheaf $T_{Z_j}$ is stable with respect to any polarization and there exists a reflexive $2$-form $\sigma \in
H^0\big(Z_j, \Omega_{Z_j}^{[2]} \big)$ such that $\sigma$ is everywhere
non-degenerate on the smooth locus of $Z_j$, and such that the exterior algebra of global reflexive forms is generated by $\sigma$.\\
\end{enumerate}
\end{thmb}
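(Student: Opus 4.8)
The plan is to transfer the classical Beauville--Bogomolov decomposition from the smooth nearby fibres to the central fibre $X$ by taking flat limits of the irreducible factors. First I would check that the general fibre is an honest Calabi--Yau manifold. Since $X$ is klt with $K_X\equiv 0$, its canonical class is torsion, and the invariance of plurigenera for the family $f$ then forces $K_{X_t}\sim_{\mathbb Q}0$ for $t\neq 0$, where $X_t:=f^{-1}(t)$; as $X_t$ is smooth this makes it a Calabi--Yau manifold in the classical sense. By \cite{beauville83}, after a finite étale cover each such $X_t$ splits as a product of an abelian variety, finitely many irreducible simply-connected Calabi--Yau manifolds, and finitely many irreducible symplectic manifolds.

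The next step is to make this splitting vary in the family. After a finite base change $\Delta\to\Delta$ ramified only at the origin, which trivialises the monodromy permuting the factors and acting on the fundamental groups of the fibres, and after replacing $\sX$ by a finite cover étale in codimension one, I would spread the fibrewise decomposition out to an isomorphism over the punctured disk $\Delta^\ast$,
\[
\sX^\ast \cong \cA^\ast \times \textstyle\prod_{i\in I}\cV_i^\ast \times \prod_{j\in J}\cS_j^\ast ,
\]
where $\cA^\ast\to\Delta^\ast$ is the relative $\Alb$, an abelian scheme, and where $\cV_i^\ast\to\Delta^\ast$ and $\cS_j^\ast\to\Delta^\ast$ are smooth families of irreducible Calabi--Yau, respectively irreducible symplectic, manifolds. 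After a further base change ensuring good reduction, the relative Albanese extends across the origin to an abelian scheme $\cA\to\Delta$, whose central fibre furnishes the abelian factor $A$.

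The heart of the argument, and the main obstacle, is to extend the simply-connected factors across the origin and to show that the flat limit of the product is the product of the flat limits. For each factor family I would take the relative canonical model over $\Delta$; since the total space carries a $\mathbb Q$-Cartier canonical divisor that is numerically trivial on every fibre, these relative canonical models are flat projective families $\cV_i\to\Delta$ and $\cS_j\to\Delta$ whose central fibres $Y_i$ and $Z_j$ are normal, have canonical singularities, and satisfy $K_{Y_i}\sim_{\mathbb Z}0$ and $K_{Z_j}\sim_{\mathbb Z}0$. Using uniqueness of relative canonical models together with the normality of $\sX$, I would then identify the central fibre of $\cA\times\prod_{i}\cV_i\times\prod_{j}\cS_j\to\Delta$ with a finite cover of $X$, étale in codimension one, producing the asserted splitting $A\times X'\to X$ with $X'\cong\prod_{i} Y_i\times\prod_{j} Z_j$. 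Arranging that the product structure genuinely descends to the fibre over $0$, and not merely over the generic point, is precisely the delicate point flagged in the introduction.

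Finally I would verify properties (1) and (2) for the limit factors. Since $K_{Y_i}\sim_{\mathbb Z}0$ and $K_{Z_j}\sim_{\mathbb Z}0$, each factor carries a nowhere-vanishing reflexive top-form; the symplectic form $\sigma$ on $Z_j$ is obtained by extending the fibrewise symplectic forms to a relative reflexive $2$-form over $\cS_j\to\Delta$ and restricting to $Z_j$, its non-degeneracy on the smooth locus and the generation of the reflexive algebra being inherited in the limit. The vanishing $h^0\big(Y_i,\Omega^{[q]}_{Y_i}\big)=0$ for $0<q<\dim Y_i$ follows because any reflexive $q$-form on $Y_i$ extends to a relative reflexive form over $\cV_i\to\Delta$ and restricts to a holomorphic $q$-form on the irreducible Calabi--Yau fibre $\cV_{i,t}$, where it must vanish. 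For stability of $T_{Y_i}$ and $T_{Z_j}$ with respect to an arbitrary polarization I would use that the tangent sheaf of a klt variety with numerically trivial canonical class is semistable for every polarization, hence polystable by the results underlying \cite{GKP,GGK}; were such a tangent sheaf not stable, a proper nonzero direct summand $F$ would, upon contraction of $\det F$ with the trivialising reflexive top-form, produce a nonzero reflexive form of intermediate degree, contradicting the vanishing in (1) for $Y_i$, and, for $Z_j$, the fact that the algebra of reflexive forms is generated by the single $2$-form $\sigma$. The crucial subtlety, and the reason only these weaker conclusions rather than full irreducibility in the sense of Definition~\ref{def:cy} are asserted, is that the flat limits $Y_i$ and $Z_j$ need not remain irreducible after passing to further quasi-étale covers.
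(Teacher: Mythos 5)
Your overall skeleton (smooth fibres are K-trivial, apply Beauville--Bogomolov to them, spread the splitting over the punctured disk after base change and a cover \'etale in codimension one, then take limits of the factors) is the same as the paper's, which is carried out in Proposition~\ref{proposition:smoothing_disk_versus_alg_curve}, Lemma~\ref{lemma:limit_etale_cover} and Proposition~\ref{prop:decomposition}. But the two steps where you depart from the paper both have genuine gaps. First, the identification of the central fibre with the product of the limits cannot be obtained from ``uniqueness of relative canonical models.'' When the relative canonical divisor is numerically trivial, the relative canonical model in the $\textup{Proj}$ sense is the base itself and carries no information, and minimal models over $\Delta$ with trivial canonical class agreeing over $\Delta^\ast$ are unique only up to flops: two such fillings can have non-isomorphic central fibres (non-separatedness of K-trivial families). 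Moreover, your factor families $\cV_i^\ast$, $\cS_j^\ast$ carry no map to $\sX$, so even flatness, normality and irreducibility of whatever central fibres an MMP produces are unclear, and nothing forces the central fibre of $\cA\times\prod_i\cV_i\times\prod_j\cS_j$ to map to $X$ quasi-\'etale. The paper instead keeps everything inside a quasi-\'etale cover $\sZ \to \sX$ extended across $0$ by semistable reduction and Stein factorization (Lemma~\ref{lemma:limit_etale_cover}, with normality and klt-ness of the central fibre coming from inversion of adjunction, Lemma~\ref{lemma:quasi_etale_cover_singularities_fibers}), splits off the abelian part over the whole curve by a relative Albanese (Proposition~\ref{proposition:Abelian_factor}), and then rigidifies by a relatively very ample line bundle which, since $h^1(\sO)=0$ on the factors, splits as a tensor product of pullbacks over the punctured base; the family then sits in a Segre-embedded product of projective spaces and separatedness of the Hilbert scheme forces the flat limit of the product to be the product of the flat limits. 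Some polarization/Hilbert-scheme rigidification of this kind is indispensable; you flag this as the delicate point but your proposed resolution of it is the step that fails.

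Second, your algebraic derivation of stability does not close. Semistability does not imply polystability, and even granting polystability of $T_{Y_i}$ with respect to every polarization, a direct summand $F$ has slope zero for every polarization, hence only a \emph{numerically trivial} determinant; contracting with the trivializing top form therefore yields a section of $\Omega^{[q]}_{Y_i}\otimes(\det F)^{-1}$, not of $\Omega^{[q]}_{Y_i}$, and the vanishing in (1) says nothing about such twisted forms. Trivializing $\det F$ would require passing to a further quasi-\'etale cover, where (1) is precisely not asserted --- this is the whole difference between Theorem~B and Definition~\ref{def:cy}, and it is a real phenomenon (see the Kummer example of Section~\ref{sec:nonsstable}, where $T_X$ is stable but the pullback to the cover splits). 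The paper proves stability analytically (Proposition~\ref{prop:holonomy}): by the Rong--Zhang convergence theorem (Theorem~\ref{thm:rz}) the Ricci-flat metrics of the nearby fibres converge locally smoothly on $X_{\reg}$ to the singular Ricci-flat metric of the central fibre, and a reducible holonomy representation would produce, via the Bochner principle (Theorem~\ref{holprin}), an honest untwisted reflexive form of intermediate degree, which extends to the family by Proposition~\ref{prop:extension_differential_forms} and contradicts irreducibility of the nearby fibre. The same machinery repairs a further gap in your symplectic case: a flat limit of symplectic forms can perfectly well degenerate, so non-degeneracy of $\sigma$ is not ``inherited in the limit''; the paper obtains a non-degenerate $\sigma$ on the smooth locus by taking limits of compatible complex structures (Remark~\ref{cstr}), where non-degeneracy is automatic from the metric.
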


\begin{rem}\label{rem:nami}
If $\pi:\wt Z_j \to Z_j$ is a $\mathbb Q$-factorial terminalization of the variety $Z_j$ from Theorem B, then it follows from \cite[Corollary 2]{Namikawa06} that $\wt Z_j$ is a \textit{smooth} symplectic variety, but not necessarily irreducible, since the varieties $Z_j$ are smoothable by construction (see Section~\ref{sec4}). In particular, 
$Z_j$ admits a symplectic resolution. Note that the existence of $\pi$ is established in \cite[Corollary 1.4.3]{bchm}.
\end{rem}

In fact, a little more can be said about the factors of $X'$ in the decomposition given by Theorem~B above. We refer to Section~\ref{ssec:factors} for partial results from the point of view of holonomy representation, and for a proof of Conjecture~\ref{conj} below assuming that a weak analogue of Beauville-Bogomolov decomposition theorem holds.

\begin{conj}\label{conj}
Let $X$ be a normal complex projective variety with klt singularities and $K_X \equiv 0$.
Suppose that $T_X$ is stable with respect to some polarization. 
Then there exists a quasi-\'etale cover $Y \to X$ such that either $Y$ is an abelian variety, or
it splits as a product of copies of a single Calabi-Yau (resp. irreducible symplectic) variety.
\end{conj}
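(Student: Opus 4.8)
The plan is to deduce the statement from the assumed weak analogue of the Beauville--Bogomolov decomposition theorem and then to use the stability hypothesis only through the elementary fact that a stable reflexive sheaf is indecomposable. First I would invoke the weak decomposition: it provides a quasi-\'etale cover $p\colon Y \to X$ together with a product decomposition
$$ Y \cong A \times \prod_{i \in I} C_i \times \prod_{j \in J} S_j, $$
where $A$ is an abelian variety, the $C_i$ are irreducible Calabi--Yau varieties, and the $S_j$ are irreducible symplectic varieties. Since $p$ is \'etale in codimension one we have $T_Y \cong p^{[*]}T_X$, and the product structure induces a direct sum decomposition
$$ T_Y \cong T_A' \oplus \bigoplus_{i \in I} T_{C_i}' \oplus \bigoplus_{j \in J} T_{S_j}', $$
where $T_F'$ denotes the reflexive pullback $\mathrm{pr}_F^{[*]}T_F$ of the tangent sheaf of the factor $F$.

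The key point is that the Galois group $G$ of $Y \to X$ can only permute factors lying in a single isomorphism class: an element of $G$ preserves the (canonical) abelian factor $A$, and sends each irreducible factor to an isomorphic one, necessarily of the same type and dimension. Grouping the factors by isomorphism class therefore yields a $G$-invariant direct sum decomposition of $T_Y$. Over the open set $U \subseteq X$ where $p$ is \'etale, whose complement has codimension at least two, this descends to a direct sum decomposition of $T_X|_U$, and taking reflexive hulls produces an honest decomposition $T_X \cong \bigoplus_\alpha F_\alpha$ indexed by the isomorphism classes $\alpha$ of factors. Now I would observe that a stable sheaf is indecomposable: if $T_X = F' \oplus F''$ with both summands nonzero, then since $\mu_H(T_X) = 0$ (because $c_1(T_X) = -K_X \equiv 0$) the slope $\mu_H(T_X)$ is a weighted average of $\mu_H(F')$ and $\mu_H(F'')$, so at least one summand has slope $\ge 0$, contradicting stability with respect to the given polarization $H$. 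Hence exactly one isomorphism class of factor occurs.

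It then remains to read off the three cases. If the surviving class is the abelian one, then $Y \cong A$ is an abelian variety. Otherwise the abelian factor is absent---its presence alongside any non-abelian factor would furnish a second $G$-invariant block---and all remaining factors are isomorphic either to a single irreducible Calabi--Yau variety $C$, so that $Y \cong C^{k}$, or to a single irreducible symplectic variety $S$, so that $Y \cong S^{k}$. This is precisely the asserted dichotomy.

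The genuine obstacle is of course the weak Beauville--Bogomolov decomposition taken as input, which is not available unconditionally and is exactly why the statement is phrased as a conjecture. Granting it, the point worth emphasizing---conceptually the translation of stability into irreducibility of the full, $G$-twisted holonomy representation---is that stability of $T_X$ does \emph{not} force $Y$ to be irreducible: the cover $Y$ may genuinely split into several isomorphic pieces, and this is compatible with stability precisely because $G$ mixes those pieces so that no proper sub-collection is $G$-invariant. The only technical verifications required are then the descent of the type-graded decomposition from a $G$-equivariant splitting over $U$ to a reflexive splitting of $T_X$, and the fact that the abelian part constitutes its own $G$-invariant block.
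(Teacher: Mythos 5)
Your overall strategy---feed in the assumed weak decomposition, let the deck group permute the pieces, and use stability of $T_X$ (slopes average to $\mu_H(T_X)=0$, so a proper direct summand would contradict stability) to force a single orbit---is the same as the paper's conditional proof of Conjecture~\ref{conj} (Proposition~\ref{prop:conj}). However, the two steps you treat as automatic are exactly where the mathematical content of that proof lies, and as written they are genuine gaps. First, you invoke ``the Galois group $G$ of $Y \to X$'', but the quasi-\'etale cover produced by the assumed decomposition has no reason to be Galois over $X$. The paper repairs this by passing to a further quasi-\'etale cover $Z \to Y$ such that $Z \to X$ is Galois; the price is that $Z$ is no longer known to be a product, so one cannot argue with automorphisms permuting factors of a product. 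All one retains on $Z$ is a direct sum decomposition $T_Z \cong \sE_0 \oplus \sE_1 \oplus \cdots \oplus \sE_m$ into foliations, and the geometric pieces have to be reconstructed: the paper observes that $\sE_i$ is induced by the projection $\pi_i \colon Z \to B_i$, that the complementary summand gives a flat connection on $\pi_i$, hence that $\pi_i$ is a locally trivial fibration whose general fiber components $Z_i$ are quasi-\'etale covers of the factors $Y_i$.

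Second, your ``key point''---that a deck transformation sends each factor to an isomorphic factor, so that grouping by isomorphism class yields a $G$-invariant splitting---is asserted, not proved. Nothing in the assumed decomposition statement gives canonicity or uniqueness of the splitting, and a priori $g^*$ could carry a summand of $T_Z$ to a ``diagonal'' subsheaf equal to no summand; this is precisely the point where descent could fail. The paper proves the needed canonicity in two pieces: (i) the non-abelian summands satisfy $h^0(Z,\sE_i)=0$ by strong stability, so the abelian summand $\sE_0$, being the globally generated trivial part, is a canonical subsheaf fixed by $G$; stability of $T_X$ then forces $T_Z=\sE_0$ whenever $\dim Y_0\ge 1$, which is the abelian case of the dichotomy; (ii) when there is no abelian factor, the summands are pairwise non-isomorphic, proved by restricting to the leaves: ${\sE_j}_{|Z_i}$ is trivial for $j\neq i$ while $\big({\sE_i}_{|Z_i}\big)^{**}\cong T_{Z_i}$ has no global sections. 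Pairwise non-isomorphism is what upgrades an abstract isomorphism $g_i^*\sE_1\cong \sE_i$ to an equality of subsheaves of $T_Z$, so that $g_i$ carries leaves to leaves and one concludes $Z_i\cong Z_1$ as \emph{varieties}; this is also how the final ``product of copies of a single variety'' cover (namely $\prod_i Z_i\cong Z_1^{\times m}\to Y\to X$) is produced---a step your sketch gets for free only because of the unjustified Galois assumption. (A smaller discrepancy: the paper's hypothesis assumes factors with strongly stable tangent sheaves rather than irreducible Calabi--Yau or symplectic factors, and it is strong stability that is used directly in both vanishing arguments above; your stronger form of the input would need Theorem~\ref{thm:ggk} to bridge back.)
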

Note that a positive answer to the conjecture above implies that Theorem A holds without the assumption that $X$ is smooth in codimension $2$. 

The following result is an immediate consequence of Theorem B. 

\begin{thmf}
Let $X$ be a normal complex projective variety with klt singularities and $K_X \equiv 0$.
Suppose that $\pi_1^{\textup{\'et}}\big(X_{\textup{reg}}\big)=\{1\}$.
Suppose furthermore that there exists a projective morphism with connected fibers $f \colon \sX \to \Delta$ from a 
normal analytic space $\sX$ whose canonical divisor $K_\sX$ is $\mathbb{Q}$-Cartier
onto the complex open unit disk $\Delta$ such that 
$X \cong f^{-1}(0)$ and such that $f^{-1}(t)$ is smooth for $t \neq 0$.
Then there exists a decomposition
of $X$ into a product of irreducible, Calabi-Yau and symplectic varieties.
\end{thmf}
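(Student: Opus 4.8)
The plan is to read the statement off Theorem~B, using the hypothesis $\pi_1^{\textup{\'et}}(X_{\textup{reg}})=\{1\}$ at exactly two places: to collapse the covering map furnished by Theorem~B onto an isomorphism, and then to promote each factor to a genuinely irreducible Calabi-Yau or symplectic variety.

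First I would apply Theorem~B, whose hypotheses coincide verbatim with those of the corollary (in particular $K_\sX$ is $\bQ$-Cartier). This yields an abelian variety $A$, a variety $X'$ with canonical singularities, a finite cover $g\colon A\times X'\to X$ \'etale in codimension one, and a decomposition $X'\cong\prod_{i\in I}Y_i\times\prod_{j\in J}Z_j$ with the properties listed there. Since $A$ and all the $Y_i,Z_j$ are irreducible projective varieties, $A\times X'$ is irreducible, hence connected. By Zariski--Nagata purity, connected quasi-\'etale covers of the normal variety $X$ are classified by $\pi_1^{\textup{\'et}}(X_{\textup{reg}})$; as this group is trivial, $X$ admits no nontrivial such cover and $g$ is an isomorphism, so $X\cong A\times\prod_{i\in I}Y_i\times\prod_{j\in J}Z_j$.

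Next I would check that every factor of this product has simply connected smooth locus. For a product of normal varieties one has $(V\times W)_{\textup{reg}}=V_{\textup{reg}}\times W_{\textup{reg}}$, so $X_{\textup{reg}}$ is the product of the smooth loci of the factors (with $A_{\textup{reg}}=A$). Each projection $X_{\textup{reg}}\to F_{\textup{reg}}$ onto a factor $F$ admits a section (fix base points in the other factors), whence $\pi_1^{\textup{\'et}}(X_{\textup{reg}})\twoheadrightarrow\pi_1^{\textup{\'et}}(F_{\textup{reg}})$ and therefore $\pi_1^{\textup{\'et}}(F_{\textup{reg}})=\{1\}$ for every factor $F$. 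Applied to $F=A$, this forces $A$ to be a point, since a positive-dimensional abelian variety has nontrivial \'etale fundamental group (for instance via the multiplication maps $[n]$). Hence $X\cong\prod_{i\in I}Y_i\times\prod_{j\in J}Z_j$.

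Finally I would upgrade the factors to irreducible pieces, which is the only step where the hypothesis really bites. Since $\pi_1^{\textup{\'et}}\big((Y_i)_{\textup{reg}}\big)=\{1\}$, the identity is the only quasi-\'etale cover of $Y_i$, so the vanishing $h^0\big(Y_i,\Omega_{Y_i}^{[q]}\big)=0$ for $0<q<\dim Y_i$ provided by Theorem~B\,(1)---a priori asserted only for $Y_i$ itself---holds for every finite cover \'etale in codimension one; together with the canonical singularities and trivial canonical class inherited from $X'$, this makes $Y_i$ irreducible Calabi-Yau in the sense of Definition~\ref{def:cy}\,(1). Likewise, the reflexive symplectic form $\sigma$ of Theorem~B\,(2) generates the algebra of reflexive forms on the only quasi-\'etale cover of $Z_j$, namely $Z_j$ itself, so $Z_j$ is irreducible symplectic by Definition~\ref{def:cy}\,(2). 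Everything outside this last passage---the equivalence between quasi-\'etale covers and $\pi_1^{\textup{\'et}}$ of the regular locus, and the splitting of $\pi_1^{\textup{\'et}}$ off a product factor---is formal, so I expect no serious obstacle beyond bookkeeping.
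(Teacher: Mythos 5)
Your proposal is correct and is exactly the argument the paper intends: the paper states this corollary as an ``immediate consequence of Theorem~B,'' namely that triviality of $\pi_1^{\textup{\'et}}\big(X_{\textup{reg}}\big)$ forces the quasi-\'etale cover $A\times X'\to X$ to be an isomorphism, kills the abelian factor, and makes the conditions of Theorem~B\,(1)--(2) on the factors equivalent to Definition~\ref{def:cy}, since each factor's regular locus inherits trivial \'etale fundamental group and hence admits no nontrivial quasi-\'etale cover. Your write-up just makes these bookkeeping steps explicit, so there is nothing to correct.
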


\subsection*{Structure of the paper}Section \ref{sec0} is mainly devoted to setting up the basic notation. We have also gathered a number of facts and basic results which will later be used in the proofs. 
Sections \ref{sec1}, \ref{sec2} and \ref{sec3} consist of technical preparations.
In Section \ref{sec1}, we recall some results on deformations of K\"ahler-Einstein metrics on smoothable singular spaces with numerically trivial canonical class.
In Section \ref{sec2}, we establish a structure result for families of mildly singular varieties with trivial canonical class.
In Section \ref{sec3}, we use results from \cite{GGK} to analyze the stability of the tangent sheaf of smoothable
singular spaces with numerically trivial canonical class.
Section \ref{sec4} is devoted to the proof of Theorem~B.
In Section \ref{sec5}, we prove Theorem~A.
Finally, in Section \ref{sec6}, we give examples of smoothable (irreducible) Calabi-Yau and symplectic varieties. We have also
collected examples which illustrate to what extent our results are sharp.

\subsection*{Acknowledgements} 
The project started while the authors were visiting the Freiburg Institute for Advanced Studies. The authors would like to thank this institution for its support and Stefan Kebekus for the invitation. 
They would like to thank Beno\^it Claudon, Daniel Greb, Stefan Kebekus, Mihai P\u{a}un, and Song Sun for interesting discussions concerning the content of this paper.

The first author was partially supported by the project Foliage of Agence Nationale de la Recherche, under agreement ANR-16-CE40-0008-01.

The second author is partially supported by NSF Grant DMS-1510214.

\section{Notation, conventions, and basic facts}
\label{sec0}

\subsection{Global Convention}
Throughout the paper we work over the field $\mathbb{C}$ of complex numbers.

A \textit{variety} is a reduced and irreducible scheme of finite type over $\mathbb{C}$. An \textit{analytic variety} is a reduced and irreducible analytic space. Given a scheme $X$, we denote by $X^{\textup{an}}$ the associated analytic space, equipped with the Euclidean topology.   

Given a scheme or an analytic space $X$, we denote by $X_{\textup{reg}}$ its smooth locus.  

\subsection{Reflexive differential forms}
Given a normal (analytic) variety $X$, we denote the sheaf of K\"{a}hler differentials by
$\Omega^1_X$. 
If $0 \le p \le \dim X$ is any number, write
$\Omega_X^{[p]}:=(\Omega_X^p)^{**}$.
The tangent sheaf will be denoted by $T_X:=(\Omega_X^1)^*$.

\subsection{Quasi-\'etale covers}

\begin{defn}
A morphism $\gamma\colon Y \to X$ between normal (analytic) varieties is called a \textit{quasi-\'etale cover}
if $\gamma$ is finite and \'etale in codimension one.
\end{defn}

\begin{rem}
Let $\gamma\colon Y \to X$ be a quasi-\'etale cover. By the Nagata-Zariski purity theorem, $\gamma$ branches only on the singular set of $X$. In particular, we have $\gamma^{-1}(X_{\textup{reg}}) \subset Y_{\textup{reg}}$. 
\end{rem}

\subsection{Stability}
The word ``stable'' will always mean ``slope-stable with respect to a
given polarization''.

\begin{defn}[{\cite[Definition 7.2]{gkp_bo_bo}}]\label{def:strongStab}
Let $X$ be a normal projective (analytic) variety of dimension $n$, and let $\sG$ be a reflexive
coherent sheaf. We call $\sG$ \emph{strongly stable}, if
for any quasi-\'etale cover $\gamma \colon Y \to X$, and for any choice of ample divisors $H_1, \ldots, H_{n-1}$ on $Y$, the reflexive pull-back $\gamma^{[*]} \sG$ is stable with respect to $(H_1, \ldots, H_{n-1})$.
\end{defn}
 
\subsection{Smoothings}
We will use the following notation.

\begin{notation}
Let $f \colon \sX \to T$ be a morphism (resp. holomorphic map) of schemes (resp. analytic spaces). We will denote by $\sX_t$ the fiber of $f$ over $t \in T$. 
\end{notation}

\begin{defn}
Let $X$ be a compact analytic space. A \textit{smoothing of $X$} is a flat proper holomorphic map $f \colon \sX \to \Delta$, where $\sX$ is an analytic space and $\Delta$ is the complex open unit disk, such that 
$\sX_0 \cong X$ and $\sX_t$ is smooth for any $t \neq 0$.
A \textit{smoothing of a proper scheme $X$} is a smoothing of the associated analytic space $X^{\textup{an}}$.
Let $f \colon \sX \to \Delta$ be a smoothing of a compact analytic space or a proper scheme. 
We say that $f$ is a \textit{projective smoothing} if $f$ is a projective map. 
If $\sX$ is normal, then we say that
$f$ is a \textit{$\mathbb{Q}$-Gorenstein smoothing} if $K_{\sX/\Delta}$ is $\mathbb{Q}$-Cartier.

Let $X$ be a proper scheme. A \textit{smoothing of $X$ over an algebraic curve} is a flat proper
morphism $f \colon \sX \to C$, where $\sX$ is a scheme and $C$ is a connected algebraic curve, such that 
$\sX_{t_0} \cong X$ for some point $t_0$ on $C$ and $\sX_t$ is smooth for any $t \neq t_0$.
A smoothing $f \colon \sX \to C$ of $X$ over an algebraic curve is said to be \textit{projective} if $f$ is a projective morphism. If $\sX$ is normal, then we say that
$f$ is a \textit{$\mathbb{Q}$-Gorenstein smoothing} if $K_{\sX/C}$ is $\mathbb{Q}$-Cartier.
\end{defn}

The following elementary facts will be used throughout.

\begin{fact}\label{remark:normality}
Let $\sX$ be an analytic space (resp. a scheme), and let $f \colon \sX \to T$ be a flat holomorphic map (resp. morphism)
with connected fibers
onto a smooth connected analytic (resp. algebraic) curve $T$. If $\sX_t$ is normal for any $t \in T$, then so is $\sX$ by \cite[Corollaire 5.12.7]{ega24} and \cite[Th\'eor\`eme 1.2]{dieudonne_grothendieck}. In particular, $\sX$ is reduced and locally irreducible (resp. reduced and irreducible).
\end{fact}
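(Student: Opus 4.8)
The plan is to reduce all the assertions to the single statement that $\sX$ is normal, and then to establish normality via Serre's criterion $(R_1 + S_2)$ together with the behaviour of these conditions under the flat map $f$. First I would record that once $\sX$ is known to be normal, the remaining claims follow formally: a normal (analytic) space is reduced, and its local rings are integrally closed domains, so $\sX$ is locally irreducible. In the scheme case a normal Noetherian scheme is the disjoint union of its integral components, hence irreducible as soon as it is connected; and connectedness of $\sX$ follows from the flatness of $f$ (which makes $f$ open), the connectedness of $T$, and the hypothesis that the fibers are connected. Indeed, if $\sX = U \sqcup V$ with $U,V$ open and nonempty, then each connected fiber lies entirely in $U$ or in $V$, so $f(U)$ and $f(V)$ are disjoint open subsets covering $T$ (using surjectivity of $f$), forcing one of them to be empty.

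For normality itself, the key geometric input is that $T$ is a smooth curve, so every local ring $\mathcal{O}_{T,t}$ is regular (a field at the generic point, a discrete valuation ring at a closed point), in particular normal and $S_k$ for all $k$. I would then invoke the flat descent of Serre's conditions. For $x \in \sX$ with image $t = f(x)$, flatness gives the depth and dimension formulas $\depth \mathcal{O}_{\sX,x} = \depth \mathcal{O}_{T,t} + \depth \mathcal{O}_{\sX_t,x}$ and $\dim \mathcal{O}_{\sX,x} = \dim \mathcal{O}_{T,t} + \dim \mathcal{O}_{\sX_t,x}$. Since the fibers $\sX_t$ are normal they satisfy $S_2$ and $R_1$; combined with the regularity of the base, the first pair of formulas propagates $S_2$ to $\sX$ (a short case analysis on $\dim \mathcal{O}_{T,t} \in \{0,1\}$ shows $\depth \mathcal{O}_{\sX,x} \ge \min(2,\dim \mathcal{O}_{\sX,x})$), while $R_1$ for $\sX$ follows because a codimension $\le 1$ point of $\sX$ either lies in a regular fiber ring over a field, or is a generic point of a closed fiber over a discrete valuation ring, and in both cases regularity ascends along the flat map. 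By Serre's criterion $\sX$ is then normal; this is precisely the content of \cite[Corollaire 5.12.7]{ega24} in the algebraic setting and of its analytic counterpart \cite[Th\'eor\`eme 1.2]{dieudonne_grothendieck}.

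The one point requiring genuine care, and the main potential obstacle, is geometric normality: the descent statement demands that the fibers be geometrically normal over their residue fields, and for the generic point of $T$ in the scheme case this residue field $\mathbb{C}(T)$ is not algebraically closed. Because we work in characteristic zero, however, every residue field is perfect, so a normal fiber is automatically geometrically normal and no extra hypothesis is needed. The analytic case is cleaner still: there is no generic fiber to worry about, normality is a pointwise condition on the honest fibers over points of $T$ (with residue field $\mathbb{C}$), and this is exactly what is assumed. Apart from this verification, everything is a formal application of Serre's criterion and the depth/dimension formulas for flat morphisms, which is why the statement can be recorded as a ready-to-use fact with the two citations above.
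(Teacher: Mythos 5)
There is a genuine gap, and it sits exactly where the real content of the two cited theorems lies. Your verification of $R_1$ and $S_2$ for $\sX$ via the depth and dimension formulas only uses the hypothesis at points of $\sX$ lying \emph{on} the fibers $\sX_t$, $t\in T$; but both Serre conditions are conditions at \emph{all} points (all primes of all local rings), and the hypothesis of the Fact says nothing there. Concretely, in the scheme case your $R_1$ argument asserts that a codimension $\le 1$ point lying over the generic point $\eta$ of $T$ "lies in a regular fiber ring over a field": this presupposes that the generic fiber $\sX_\eta = \sX\times_T \operatorname{Spec}\mathbb{C}(T)$ is normal ($R_1$ and, for your $S_2$ case $\dim\mathcal{O}_{T,t}=0$, also $S_2$). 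That is not among the hypotheses: the fibers assumed normal are those over the actual points of the curve or disk — the only reading compatible with the analytic half of the statement and with every application in the paper, where one knows the closed fibers (the special one is $X$, the others are smooth) and nothing a priori about the generic fiber. Your paragraph of "genuine care" addresses a different and much easier point (perfectness of $\mathbb{C}(T)$, i.e.\ normal $\Rightarrow$ geometrically normal), not the question of why the generic fiber is normal at all. The same gap reappears, contrary to your claim that the analytic case is "cleaner": for $x\in\sX_t$, normality of $\mathcal{O}_{\sX,x}$ requires $R_1+S_2$ at all primes of that local ring, including those \emph{not} containing the pullback $\pi$ of a local coordinate on $T$, and these are not localizations of the fiber ring $\mathcal{O}_{\sX,x}/\pi\mathcal{O}_{\sX,x}$; so the depth formula alone never sees them.

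This is precisely what the paper's citations supply. The underlying result is local: if $B$ is a Noetherian local ring and $t\in\mathfrak{m}_B$ is a nonzerodivisor such that $B/tB$ is normal, then $B$ is normal (essentially [EGA IV, Cor.\ 5.12.7]; [D-G, Th.\ 1.2] is the analytic-algebra counterpart). Its proof needs more than the flat-ascent formulas: for a prime $\mathfrak{q}$ with $t\notin\mathfrak{q}$ one passes to a prime $\mathfrak{q}'$ minimal over $\mathfrak{q}+tB$, uses Krull's principal ideal theorem to bound $\dim B_{\mathfrak{q}'}/\mathfrak{q}B_{\mathfrak{q}'}$, an Ischebeck-type inequality $\depth B_{\mathfrak{q}} \ge \depth B_{\mathfrak{q}'} - \dim B_{\mathfrak{q}'}/\mathfrak{q}B_{\mathfrak{q}'}$ for $S_2$, and the fact that $B_{\mathfrak{q}'}$ is regular when $(B/tB)_{\mathfrak{q}'}$ is regular for $R_1$. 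Granting that lemma, one applies it at every point of $\sX$ lying over a point of $T$ (flatness over the local ring of a smooth curve is exactly "$\pi$ is a nonzerodivisor"), and then handles points over $\eta$ in the scheme case by noting that their local rings are localizations of local rings at closed points (every nonempty closed subset of a finite-type $\mathbb{C}$-scheme contains a closed point, and localizations of normal local rings are normal). Alternatively, for proper $f$ you could first invoke openness of the geometrically-normal locus in flat proper families (EGA IV$_3$, 12.2.4) to conclude that the generic fiber is normal and then run your Serre-criterion argument; but some such theorem is indispensable, and your write-up replaces it with an assertion.
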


\begin{fact}
Let $\sX$ be a variety, and let $X \subset \sX$ be a Cartier divisor. If $X$ is regular at $x$, then so is $\sX$.
\end{fact}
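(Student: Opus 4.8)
The statement is local and purely a matter of commutative algebra, so the plan is to pass to the local ring $A := \O_{\sX,x}$, which is a Noetherian local domain (it is a domain because $\sX$ is a variety, hence reduced and irreducible) with maximal ideal $\frm$ and residue field $k := A/\frm$. Since $x$ lies on $X$ and $X$ is a Cartier divisor, its ideal in $A$ is principal, say $\cI_{X,x} = tA$ for some $t \in \frm$; the hypothesis that $X$ is regular at $x$ then says precisely that $A/tA = \O_{X,x}$ is a regular local ring. Because $A$ is a domain and $X \neq \sX$, the element $t$ is nonzero, hence automatically a nonzerodivisor, which is the one structural input I need.

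The key numerical point is that quotienting by $t$ drops the Krull dimension by exactly one. Indeed $\dim A/tA \ge \dim A - 1$ holds for any principal quotient by Krull's Hauptidealsatz, while $t$ being a nonzerodivisor forces the reverse inequality, so $\dim A/tA = \dim A - 1$. I would then compare cotangent spaces: there is a canonical surjection $\frm/\frm^2 \twoheadrightarrow \frm/(\frm^2 + tA)$ whose target is exactly the cotangent space of $A/tA$, and whose kernel $(\frm^2 + tA)/\frm^2$ is spanned over $k$ by the class of $t$, hence is at most one-dimensional. Combining this with regularity of $A/tA$ and the dimension drop gives
$$\dim_k \frm/\frm^2 \;\le\; \dim_k \frm/(\frm^2+tA) + 1 \;=\; \dim A/tA + 1 \;=\; \dim A.$$

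On the other hand, the general inequality $\dim_k \frm/\frm^2 \ge \dim A$ holds for every Noetherian local ring, so both sides are equal and $A$ is regular; that is, $\sX$ is regular at $x$. I do not anticipate any serious obstacle here: the only points requiring (routine) care are the verification that the local equation $t$ is a nonzerodivisor, which is guaranteed by $\sX$ being an integral scheme, and the exactness of the dimension drop, which rests on that same nonzerodivisor property. As an alternative one could argue geometrically through the conormal sequence $\cI_X/\cI_X^2 \to \Omega^1_{\sX}|_X \to \Omega^1_X \to 0$, where smoothness of $X$ at $x$ makes $\Omega^1_X$ locally free of rank $\dim X$ near $x$ and the conormal sheaf a line bundle; computing the dimension of the fiber at $x$ then recovers exactly the cotangent-space estimate above.
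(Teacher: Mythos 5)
Your proof is correct. Note that the paper states this Fact without any proof at all --- it is invoked as standard commutative algebra --- so there is no argument of the authors to compare yours against; what you have written is a complete justification of exactly the statement the paper takes for granted.

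All the delicate points are handled properly: the local equation $t$ of the Cartier divisor is a nonzerodivisor because $\O_{\sX,x}$ is a domain (the variety hypothesis) and $t\neq 0$; this is what turns Krull's inequality $\dim A/tA \ge \dim A - 1$ into the equality $\dim A/tA = \dim A - 1$; and the kernel of $\frm/\frm^2 \twoheadrightarrow \frm/(\frm^2+tA)$ is indeed at most one-dimensional, since the $A$-linear map $a \mapsto ta + \frm^2$ kills $\frm$ and hence factors through $k$. Combined with the general bound $\dim_k \frm/\frm^2 \ge \dim A$, this gives regularity of $A$. An equivalent and slightly more common textbook phrasing of the same argument: lift a regular system of parameters $\bar{x}_1,\dots,\bar{x}_{d-1}$ of $A/tA$ to $\frm$ and observe that $x_1,\dots,x_{d-1},t$ generate $\frm$, so the embedding dimension of $A$ is at most $d = \dim A$. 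Your closing remark about the conormal sequence is fine as an alternative over $\mathbb{C}$ (where regularity and smoothness agree for finite-type schemes), but the local-algebra argument is preferable since it applies verbatim at non-closed points $x$, which is the generality in which the Fact is stated.
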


\begin{fact}\label{remark:inversion_adjunction}
In the setup of Fact \ref{remark:normality}, suppose that $\sX$ is a normal variety, and
that $K_\sX$ is $\mathbb{Q}$-Cartier. If $\sX_t$ has klt singularities for some point $t$ on $C$, then $\sX$ has klt singularities in a neighborhood of $\sX_t$ by inversion of adjunction (see \cite[Corollary 7.6]{kollar97}). This implies that $\sX$ has rational singularities in a neighborhood of $\sX_t$ (see \cite[Theorem 1.3.6]{kmm}).
\end{fact}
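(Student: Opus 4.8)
The plan is to exhibit $\sX_t$ as a reduced Cartier divisor on $\sX$, to apply inversion of adjunction to the pair $(\sX,\sX_t)$, and to transfer the resulting log terminality of the pair to $\sX$ itself; rationality then follows formally from the cited result. First I would note that $\sX_t$ is a reduced Cartier divisor. Since $t$ is a regular point of the smooth curve $C$, the maximal ideal $\frm_{C,t}$ is principal, generated by a local coordinate $s$; by flatness of $f$ the scheme-theoretic fiber is cut out by $f^*s$, so it equals $\div(f^*s)$ and is a Cartier divisor on $\sX$. Because $\sX_t$ has klt singularities it is normal, hence reduced and, the fibers of $f$ being connected, irreducible. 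As $K_\sX$ is $\mathbb{Q}$-Cartier and $\sX_t$ is Cartier, the class $K_\sX+\sX_t$ is $\mathbb{Q}$-Cartier and the pair $(\sX,\sX_t)$ is defined in a neighborhood of $\sX_t$.

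The key point, where the hypotheses are genuinely used, is that the different vanishes. Write the adjunction formula $(K_\sX+\sX_t)|_{\sX_t}\sim_{\mathbb{Q}}K_{\sX_t}+\Diff_{\sX_t}(0)$, where the effective divisor $\Diff_{\sX_t}(0)$ is determined at the codimension-one points of $\sX_t$ and is supported on $\sX_t\cap\Sing(\sX)$. Now $\sX_t$ is normal, hence regular in codimension one, so at the generic point of each prime divisor of $\sX_t$ — a point of codimension two in $\sX$ — the divisor $\sX_t$ is smooth; since $\sX_t$ is Cartier, the elementary fact recorded above, that a variety is regular at a point at which a Cartier divisor through it is regular, shows that $\sX$ is smooth there as well. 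Hence $\Sing(\sX)$ meets $\sX_t$ only in codimension at least two in $\sX_t$, and $\Diff_{\sX_t}(0)=0$. Thus the pair $(\sX_t,\Diff_{\sX_t}(0))=(\sX_t,0)$ is klt, which is exactly our hypothesis, and inversion of adjunction in the form of \cite[Corollary 7.6]{kollar97} gives that $(\sX,\sX_t)$ is plt in a neighborhood of $\sX_t$.

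It remains to pass from plt of the pair to klt of $\sX$, and this is where being Cartier pays off. Choosing a log resolution $g\colon \sX'\to\sX$ and comparing $K_{\sX'}=g^*K_\sX+\sum_i a_iE_i$ with $K_{\sX'}+\sX_t'=g^*(K_\sX+\sX_t)+\sum_i b_iE_i$, where $\sX_t'$ is the strict transform, together with $g^*\sX_t=\sX_t'+\sum_i m_iE_i$ and $m_i\geq 0$ (as $\sX_t$ is effective and Cartier), one finds $a_i=b_i+m_i\geq b_i>-1$ for every exceptional divisor $E_i$. Hence all discrepancies of $\sX$ exceed $-1$, so $\sX$ is klt in a neighborhood of $\sX_t$, and \cite[Theorem 1.3.6]{kmm} then yields that $\sX$ has rational singularities there. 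The step I expect to demand the most care is the vanishing of the different: it is precisely here that normality of the klt fiber and the Cartier property combine, via the regularity statement for Cartier divisors, to reduce us to the boundary-free situation in which the cited inversion-of-adjunction statement applies cleanly.
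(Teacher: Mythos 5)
Your proof is correct and follows essentially the same route as the paper, which simply cites \cite[Corollary 7.6]{kollar97} for the klt statement and \cite[Theorem 1.3.6]{kmm} for rationality: you have unpacked the content of that corollary (exhibiting $\sX_t$ as a reduced normal Cartier divisor, checking $\Diff_{\sX_t}(0)=0$ via the paper's regularity fact for Cartier divisors, invoking inversion of adjunction to get $(\sX,\sX_t)$ plt near $\sX_t$, and passing from plt to klt by the discrepancy comparison $a_i=b_i+m_i$ with $m_i\ge 0$). All steps are sound, including the use of normality of the klt fiber to define the different and of connectedness to get irreducibility, so nothing further is needed.
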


\begin{rem}\label{remark:lc_canonical}
In the setup of Fact \ref{remark:normality}, suppose that $\sX$ is a normal variety, and
that $K_\sX$ is $\mathbb{Q}$-Cartier.
If $\sX_{t_0}$ has lc singularities for some point $t_0$ on $C$ and $\sX_t$ has canonical singularities for $t \neq t_0$, then $\sX \times_C C_1$ has canonical singularities for any finite morphism $C_1 \to C$ from a smooth algebraic curve $C_1$ by \cite[Theorem 2.5]{karu_stable}. Note that the proof of \cite[Theorem 2.5]{karu_stable} relies on \cite[Theorem 1.2]{bchm}. However, we will not need this stronger statement.
\end{rem}

Let $X$ be a normal projective variety. If $X$ admits a projective $\mathbb{Q}$-Gorenstein
smoothing over an algebraic curve then $X$ obviously admits a projective $\mathbb{Q}$-Gorenstein smoothing. The main result of the present section is a partial converse to this observation.
See Lemma \ref{lemma:smoothing_disk_versus_alg_curve} and Proposition \ref{proposition:smoothing_disk_versus_alg_curve} for precise statements.

\begin{lemma}\label{lemma:smoothing_disk_versus_alg_curve}
Let $X$ be a normal projective variety with Gorenstein singularities. If $X$ admits a projective smoothing, then $X$ admits a projective $\mathbb{Q}$-Gorenstein smoothing over an algebraic curve.
\end{lemma}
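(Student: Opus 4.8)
The plan is to algebraize the given analytic smoothing by means of the Hilbert scheme, and then to exploit the fact that the central fibre $X$ is Gorenstein in order to force the total space of a suitable algebraic family to be Gorenstein, hence $\mathbb{Q}$-Gorenstein.

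First I would fix a line bundle $\mathscr L$ on $\sX$ that is very ample relative to $f$. For $m \gg 0$ the sheaf $f_*\mathscr L^{\otimes m}$ is locally free, hence free over the disk $\Delta$, and yields a closed embedding $\sX \hookrightarrow \P^N \times \Delta$ over $\Delta$ with $N+1 = P(m)$, where $P$ is the (constant) Hilbert polynomial of the fibres. Setting $H := \Hilb^P(\P^N)$ with universal family $\Univ \to H$, the family $\sX \to \Delta$ is classified by a holomorphic map $\psi \colon \Delta \to H^{\textup{an}}$ with $\psi(0) = h_0$ the point corresponding to $X \subset \P^N$, and $\psi(t) \in H_{\textup{sm}}$ for $0 < |t| \ll 1$, where $H_{\textup{sm}} \subset H$ is the Zariski open locus parametrizing smooth subschemes. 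Since $h_0$ is a Euclidean limit of points of $H_{\textup{sm}}$, and the Euclidean closure of a constructible subset of a variety coincides with its Zariski closure, I obtain $h_0 \in \overline{H_{\textup{sm}}}$.

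Next I would pick an irreducible component $Z$ of $\overline{H_{\textup{sm}}}$ containing $h_0$; its general point lies in $H_{\textup{sm}}$. If $\dim Z = 0$ then $h_0 \in H_{\textup{sm}}$, so $X$ is smooth and the trivial family $X \times \mathbb{A}^1 \to \mathbb{A}^1$ already does the job; so assume $\dim Z \geq 1$. Cutting $Z$ by general hyperplanes through $h_0$ produces an irreducible curve $C_0 \ni h_0$ whose general point lies in $H_{\textup{sm}}$. Let $\nu \colon C_1 \to C_0$ be the normalization, choose $t_0 \in \nu^{-1}(h_0)$, and form $\mathscr Y_1 := \Univ \times_H C_1 \to C_1$; this is flat and projective, with $(\mathscr Y_1)_{t_0} \cong X$ and smooth generic fibre. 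As the non-smooth-fibre locus and the locus $\{h^0(\mathcal O) \geq 2\}$ are proper closed subsets of the curve $C_1$ that are avoided by the general point, I remove the finitely many offending points other than $t_0$ to obtain a connected smooth algebraic curve $C \ni t_0$ and a flat projective family $f' \colon \mathscr Y := \mathscr Y_1 \times_{C_1} C \to C$ with connected fibres, central fibre $\mathscr Y_{t_0} \cong X$, and all other fibres smooth.

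Finally I would check the two defining properties. Every fibre of $f'$ is normal (either smooth or isomorphic to $X$), so $\mathscr Y$ is normal by Fact~\ref{remark:normality}. Moreover every fibre of $f'$ is Gorenstein ($X$ by hypothesis, the rest being smooth) and $C$ is regular; since the Gorenstein property is fibrewise for a flat morphism over a regular base, $\mathscr Y$ is Gorenstein. In particular $K_{\mathscr Y}$ is Cartier, and because $K_C$ is Cartier the relative canonical divisor $K_{\mathscr Y/C} = K_{\mathscr Y} - f'^* K_C$ is Cartier, a fortiori $\mathbb{Q}$-Cartier. Thus $f' \colon \mathscr Y \to C$ is a projective $\mathbb{Q}$-Gorenstein smoothing of $X$ over an algebraic curve. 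I expect the delicate step to be the algebraization, namely placing $h_0$ in the Zariski closure of $H_{\textup{sm}}$ and extracting through it an algebraic curve with connected, generically smooth fibres; the normality and Gorensteinness of the total space then follow formally from the fibrewise hypotheses.
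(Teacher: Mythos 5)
Your proof is correct, and its first half — embedding $\sX$ into $\P^N\times\Delta$, classifying by a holomorphic map to $\Hilb(\P^N)^{\textup{an}}$, and extracting an algebraic curve through $[X]$ and a general (smooth) point of the relevant Hilbert-scheme component — is the same algebraization argument as in the paper (your use of ``Euclidean closure of a constructible set equals Zariski closure'' is just a more explicit packaging of the paper's choice of a component whose generic point parametrizes smooth fibres). Where you genuinely diverge is in how the Gorenstein hypothesis on $X$ propagates to the total space. The paper never needs the other fibres of the curve family to be Gorenstein a priori: it works over (a neighbourhood in) the Hilbert scheme itself, invokes Conrad's base-change theorem $\omega_{U/H}|_{U_t}\cong\omega_{U_t}$ for the Cohen--Macaulay morphism $U\to H$, and uses openness of invertibility of a coherent sheaf to conclude that $\omega_{U/H}$ is invertible near $[X]$ \emph{before} restricting to a curve; the relative dualizing sheaf is then identified with $\sO_{\sY}(K_{\sY/C})$ by reflexivity. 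You instead choose and prune the curve first, so that every fibre is either smooth or isomorphic to $X$, and then apply the local-algebra fact (Matsumura) that a flat local homomorphism with Gorenstein base and Gorenstein closed fibre has Gorenstein source; combined with normality of $\sY$ (via Fact~\ref{remark:normality}, after your connectedness pruning using semicontinuity of $h^0(\sO)$ from $h^0(\sO_X)=1$) this gives that $K_{\sY}$ itself is Cartier, hence $K_{\sY/C}$ is. Your route buys elementarity — commutative algebra in place of relative duality theory — and even yields the slightly stronger conclusion that the total space is Gorenstein; the paper's route buys invertibility of the relative dualizing sheaf over a whole Hilbert-scheme neighbourhood of $[X]$, with no need to control the remaining fibres of the curve family in advance. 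Both are complete proofs of the lemma.
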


\begin{proof}
Let $f\colon \sX \to \Delta$ be a projective smoothing of $X$. We may assume without loss of generality that $\sX \subset \mathbb{P}^N\times \Delta$ for some positive integer $N$. Let $\Delta \to \textup{Hilb}(\mathbb{P}^N)^{\textup{an}}$ be the universal holomorphic map, and let $H$ any irreducible component of $\textup{Hilb}(\mathbb{P}^N)$ such that $H^{\textup{an}}$ contains the image of $\Delta$. 
Let also $U \subset \mathbb{P}^N \times H$ be the universal family.
Note that $\dim H \ge 1$ and that the generic fiber of the natural morphism $U \to H$ is smooth. Replacing $H$ by a Zariski open neighborhood of $[X]$, if necessary, we may assume that 
for any $t \in H$, $U_t$ is normal and Cohen-Macaulay by \cite[Th\'eor\`emes 12.2.1 et 12.2.4]{ega28}.
Denote by $\omega_{U/H}$
the dualizing sheaf of the Cohen-Macaulay morphism $U \to H$ (see \cite[3.5]{conrad}).
By \cite[Theorem 3.5.6]{conrad}, for any closed point $t \in H$, we have  
${\omega_{U/H}}_{|U_t}\cong \omega_{U_t}$ where $\omega_{U_t}$ is the dualizing sheaf of $U_t \to \textup{pt}$.
Set $t_0:=[X] \in H$.
Because $\omega_{U_{t_0}}$ is invertible by assumption, we see that $\omega_{U/H}$ is invertible in a Zariski open neighborhood of $X$. By shrinking $H$ if necessary, we may therefore assume that $\omega_{U/H}$ is invertible.
Let $C \to H$ be a smooth curve passing through $[X]$ and a general point, and set $\sY:=C\times_H U$.
It comes with a natural morphism $g \colon \sY \to C$. 
Note that $g$ is a projective smoothing of $X$ over $C$. By Fact \ref{remark:normality}, we know that
$\sY$ is normal.
Applying \cite[Theorem 3.5.6]{conrad} again, we see 
that $\omega_{\sY/C}$ is invertible. Moreover, its restriction to the locus 
${\sY}_{\textup{reg}}^\circ\subset {\sY}_{\textup{reg}}$
where $g$ is smooth is $\omega_{{\sY}_{\textup{reg}}^\circ/H}$. On the other hand,
$\omega_{{\sY}_{\textup{reg}}^\circ/H} \cong \sO_{{\sY}_{\textup{reg}}^\circ}\big(K_{{\sY}_{\textup{reg}}^\circ/H}\big)
\cong {\sO_{\sY}\big(K_{\sY/H}\big)}_{|{\sY}_{\textup{reg}}^\circ}$.
It follows that
$\omega_{\sY/H} \cong \sO_{\sY}\big(K_{\sY/H}\big)$ since both are reflexive sheaves. Hence
$K_{\sY/H}$ is Cartier, completing the proof of the lemma.
\end{proof}

The same argument used in the proof of Lemma \ref{lemma:smoothing_disk_versus_alg_curve} above shows that the following holds.

\begin{lemma}\label{lemma:smoothing_disk_versus_alg_curve2}
Let $X$ be a normal projective variety. If $X$ admits a projective smoothing, then $X$ admits a projective smoothing over an algebraic curve.
\end{lemma}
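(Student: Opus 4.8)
The plan is to run the construction from the proof of Lemma~\ref{lemma:smoothing_disk_versus_alg_curve} essentially verbatim, discarding every step that concerns the relative dualizing sheaf $\omega_{U/H}$, since here we assert nothing about the canonical class. Let $f\colon \sX \to \Delta$ be a projective smoothing of $X$. First I would use the projectivity of $f$ to embed $\sX \subset \mathbb{P}^N\times \Delta$ for some $N$, form the classifying holomorphic map $\Delta \to \textup{Hilb}(\mathbb{P}^N)^{\textup{an}}$, and let $H$ be an irreducible component of $\textup{Hilb}(\mathbb{P}^N)$ whose analytification contains the image of $\Delta$, with universal family $U \subset \mathbb{P}^N\times H$. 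Since the central fibre $X$ is singular while the fibres $\sX_t$ for $t\neq 0$ are smooth, the classifying map is non-constant, so $\dim H \ge 1$ and the generic fibre of the natural morphism $U \to H$ is smooth. If I wish, I can shrink $H$ to a Zariski open neighbourhood of $t_0:=[X]$ on which all fibres $U_t$ are normal, using \cite[Th\'eor\`emes 12.2.1 et 12.2.4]{ega28} exactly as in Lemma~\ref{lemma:smoothing_disk_versus_alg_curve}; this is not needed for the statement but lets Fact~\ref{remark:normality} apply at the end and costs nothing.

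The heart of the argument, and the one place that is genuinely new rather than a transcription of the earlier proof, is the choice of the base curve. Because the generic fibre of $U \to H$ is smooth, the locus $B \subset H$ over which the fibre is singular is a proper closed subset, and it contains $t_0$. What I must arrange is a curve through $t_0$ meeting $B$ \emph{only} at $t_0$, since the definition of a smoothing over an algebraic curve requires every fibre other than the central one to be smooth. Concretely: take an irreducible algebraic curve $C_0$ through $t_0$ with $C_0\not\subseteq B$ (a general linear section through $t_0$ in a projective embedding of $\overline{H}$ when $\dim H \ge 2$, or $C_0 = H$ itself when $\dim H = 1$); then $C_0\cap B$ is a finite set $\{t_0\}\cup\{q_1,\dots,q_k\}$; delete the $q_i$ and normalise, obtaining a smooth connected algebraic curve $C$ together with a morphism $C \to H$ and a point still denoted $t_0$ lying over $[X]$, such that every other point of $C$ maps into $H\smallsetminus B$. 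The subtlety to keep in mind is simply that a curve through $[X]$ and a general point need not meet $B$ only at $[X]$, which is precisely why the extra intersection points with $B$ must be removed; everything surrounding this choice is routine general position.

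Finally I would set $\sY := C\times_H U$ with its projection $g\colon \sY \to C$. Base change preserves flatness and projectivity, so $g$ is a flat projective morphism of schemes; its fibre over $t_0$ is $U_{[X]}\cong X$, and its fibre over any other point of $C$ is smooth, that point lying outside $B$. Hence $g$ is a projective smoothing of $X$ over the algebraic curve $C$, as required (and if the normality shrinking was performed, $\sY$ is moreover normal by Fact~\ref{remark:normality}). The only delicate point in the whole argument is the arrangement, described above, that no fibre other than the central one be singular; all remaining assertions are formal consequences of the flatness and projectivity of the universal family over the Hilbert scheme.
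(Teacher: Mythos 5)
Your proposal is correct and is essentially the paper's own argument: the paper proves Lemma~\ref{lemma:smoothing_disk_versus_alg_curve2} by invoking ``the same argument'' as Lemma~\ref{lemma:smoothing_disk_versus_alg_curve} (embed $\sX$ in $\mathbb{P}^N\times\Delta$, classify into $\textup{Hilb}(\mathbb{P}^N)$, base change the universal family to a curve through $[X]$ and a general point), which is exactly what you do after discarding the steps concerning $\omega_{U/H}$. Your additional care in arranging that the curve meets the discriminant locus only at $[X]$ (deleting the extra intersection points before normalizing) addresses a point the paper's write-up glosses over --- its definition of a smoothing requires \emph{every} non-central fibre to be smooth --- so your version is, if anything, slightly more complete than the original.
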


The proof of \cite[Proposition 1.4.14]{lazarsfeld1} apply in the analytic setting to show that the following holds.

\begin{lemma}\label{lemma:locus_nef}
Let $f \colon \sX \to T$ be a projective holomorphic map (resp. morphism) of analytic spaces (resp. schemes), and let $\sL$ be a line bundle
on $\sX$. Suppose furthermore that $f$ is surjective.
Then the set of points 
$t$ on $T$ such that $\sL_{|\sX_{t}}$ is not nef is a countable union of analytic subsets.
\end{lemma}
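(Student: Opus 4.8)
The plan is to realize the non-nef locus as a countable union of images of a relative parameter space of curves, cut out by the condition that $\sL$ has negative degree. The starting point is the standard characterization of nefness on a fiber: for $t\in T$, the restriction $\sL_{|\sX_t}$ fails to be nef precisely when there is a reduced irreducible curve $C\subset\sX_t$ with $\sL\cdot C<0$. So the first task is to organize \emph{all} curves contained in fibers of $f$ into a single family over $T$, and to understand how the intersection number $\sL\cdot C$ behaves in such a family.

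Since $f$ is projective, fix a line bundle $\sA$ on $\sX$ that is ample relative to $f$, and consider the relative cycle space $\pi\colon\cC_1(\sX/T)\to T$ of relative $1$-cycles (the relative Barlet space in the analytic category, the relative Chow scheme in the algebraic one). Both functions $Z\mapsto\sA\cdot Z$ and $Z\mapsto\sL\cdot Z$ are integer-valued and continuous on $\cC_1(\sX/T)$ — being obtained by integrating the first Chern forms of $\sA$ and of $\sL$ over the cycle $Z$ — hence locally constant. I would therefore decompose $\cC_1(\sX/T)$ into the open-and-closed pieces $\cC_1^{(a,\ell)}$ on which $\sA\cdot Z=a$ and $\sL\cdot Z=\ell$; there are only countably many of these, indexed by $(a,\ell)\in\mathbb{Z}_{\ge 0}\times\mathbb{Z}$. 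The crucial point is that for fixed $a$ the cycles of $\sA$-degree $a$ sit inside the compact fibers of $f$ and have bounded volume, so by Bishop's compactness theorem (in the algebraic case, boundedness of the Hilbert/Chow scheme with fixed numerical invariants) each restriction $\pi\colon\cC_1^{(a,\ell)}\to T$ is a \emph{proper} map. Consequently its image $S_{a,\ell}:=\pi\big(\cC_1^{(a,\ell)}\big)$ is an analytic subset of $T$ by Remmert's proper mapping theorem (in the algebraic case, by properness of the morphism).

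It then remains to verify the equality $\{t : \sL_{|\sX_t}\ \text{not nef}\}=\bigcup_{\ell<0}S_{a,\ell}$, the union running over all pairs $(a,\ell)$ with $\ell<0$. If $\sL_{|\sX_t}$ is not nef, some reduced irreducible $C\subset\sX_t$ satisfies $\sL\cdot C<0$, and $[C]\in\cC_1^{(a,\ell)}$ for $a=\sA\cdot C$ and $\ell=\sL\cdot C<0$, whence $t\in S_{a,\ell}$; conversely, any $t\in S_{a,\ell}$ with $\ell<0$ carries an effective $1$-cycle $Z$ with $\sL\cdot Z=\ell<0$, so some component of $Z$ meets $\sL$ negatively and $\sL_{|\sX_t}$ is not nef. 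This exhibits the non-nef locus as a countable union of analytic subsets, as claimed. I expect the main technical obstacle to be the construction of the relative cycle space together with the properness of $\cC_1^{(a,\ell)}\to T$ in the analytic setting: this is exactly the point where Lazarsfeld's purely algebraic boundedness argument must be replaced by Bishop's theorem on the compactness of families of cycles of bounded volume (combined with the properness of $f$), and one must check that relative ampleness of $\sA$ genuinely forces the volume bound on the members of $\cC_1^{(a,\ell)}$.
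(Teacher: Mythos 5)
Your proposal is correct and follows essentially the same route as the paper, which simply cites \cite[Proposition 1.4.14]{lazarsfeld1} and asserts that its proof (parametrize curves in fibers by the relative Chow/Hilbert space, use local constancy of intersection numbers and properness of the components over the base, and take images of the components with negative $\sL$-degree) carries over to the analytic setting. You have merely spelled out the adaptation the paper leaves implicit --- the relative Barlet space, Bishop's compactness theorem to get properness of the pieces of fixed $\sA$-degree, and Remmert's proper mapping theorem to conclude that the images are analytic subsets --- which is exactly the intended argument.
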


We end the preparation for the proof of Proposition \ref{proposition:smoothing_disk_versus_alg_curve} with the following observation.

\begin{lemma}\label{lemma:num_trivial_versus_torsion}
Let $f \colon \sX \to T$ be a flat projective holomorphic map (resp. morphism) 
of analytic spaces (resp. schemes). Suppose that $\sX$ is normal, $T$ is smooth, and that
$\sX_t$ is connected with klt singularities for any point $t \in T$. Suppose furthermore that $K_{\sX/T}$ is $\mathbb{Q}$-Cartier. If $K_{\sX_{t_0}} \equiv 0$ for some point $t_0\in T$, then
there exists a Zariski open cover $(T^\alpha)_{\alpha \in A}$ of $T$ such that $K_{\sX^\alpha/T^\alpha}$ is torsion where $\sX^\alpha:=f^{-1}(T^\alpha)$. In particular, 
$K_{\sX_t}$ is torsion for all $t \in T$.
\end{lemma}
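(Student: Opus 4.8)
The plan is to reduce everything to a single Cartier line bundle on $\sX$ and to prove that it is relatively trivial. Fix $m>0$ so that $mK_{\sX/T}$ is Cartier. Since $\sX_{t_0}$ has klt singularities and $K_{\sX_{t_0}}\equiv 0$, the abundance theorem for klt canonical classes of numerical dimension zero (which guarantees $K\equiv 0\Rightarrow K$ torsion) shows that $K_{\sX_{t_0}}$ is torsion; after enlarging $m$ we may assume $mK_{\sX_{t_0}}\sim 0$, so that the invertible sheaf $\cL:=\mathcal{O}_\sX(mK_{\sX/T})$ satisfies $\cL_{|\sX_{t_0}}\cong \mathcal{O}_{\sX_{t_0}}$. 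The goal then becomes to show that $\cL_{|\sX_t}\cong\mathcal{O}_{\sX_t}$ for \emph{every} $t$, and in fact that $\cL\cong f^*\cM$ for some invertible sheaf $\cM$ on $T$.

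First I would prove that $mK_{\sX_t}\sim 0$ for all $t$, not just for $t_0$. Fix a line bundle $A$ on $\sX$ whose restriction to each fiber is ample, and set $n=\dim\sX_t$. By flatness the intersection number $\big(mK_{\sX_t}\big)\cdot\big(A_{|\sX_t}\big)^{n-1}$ is locally constant in $t$; since it vanishes at $t_0$, it vanishes for all $t$. Independently, invariance of plurigenera for the klt family $f$ (whose relative canonical class is $\mathbb{Q}$-Cartier) gives $h^0\big(\sX_t,\mathcal{O}(mK_{\sX_t})\big)=h^0\big(\sX_{t_0},\mathcal{O}(mK_{\sX_{t_0}})\big)=1$ for all $t$, so $mK_{\sX_t}$ is effective. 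An effective divisor meeting an ample class in degree zero must be zero; hence $mK_{\sX_t}\sim 0$, i.e. $\cL_{|\sX_t}\cong\mathcal{O}_{\sX_t}$, for every $t$.

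Granting this, the rest is formal. The function $t\mapsto h^0\big(\sX_t,\cL_{|\sX_t}\big)$ is constant equal to $1$, so by Grauert's theorem $\cM:=f_*\cL$ is invertible on $T$ and commutes with base change. The adjunction morphism $f^*\cM\to\cL$ restricts on each fiber to the evaluation of a nowhere vanishing section of $\cL_{|\sX_t}\cong\mathcal{O}_{\sX_t}$, hence is an isomorphism fiberwise, and therefore an isomorphism of line bundles on $\sX$. Thus $\mathcal{O}_\sX(mK_{\sX/T})\cong f^*\cM$. As $\cM$ is invertible it trivializes on the members $T^\alpha$ of a Zariski open cover $(T^\alpha)_{\alpha\in A}$ of $T$; on $\sX^\alpha=f^{-1}(T^\alpha)$ this yields $\mathcal{O}_{\sX^\alpha}\big(mK_{\sX^\alpha/T^\alpha}\big)\cong\mathcal{O}_{\sX^\alpha}$, that is $mK_{\sX^\alpha/T^\alpha}\sim 0$, so $K_{\sX^\alpha/T^\alpha}$ is torsion. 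Restricting to a fiber gives $mK_{\sX_t}\sim 0$ for all $t$, which is the final assertion.

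The main obstacle is the effectivity statement on the \emph{special} fibers used in the second paragraph. Numerical triviality of $K$ propagates freely to very general fibers: applying Lemma~\ref{lemma:locus_nef} to $\pm\, mK_{\sX/T}$ shows that the locus where $K_{\sX_t}$ fails to be nef, together with its analogue for $-K_{\sX_t}$, is a countable union of proper analytic subsets of $T$ avoiding $t_0$, so $K_{\sX_t}\equiv 0$ off such a set. Controlling the possibly jumping plurigenera of the remaining (special) fibers is exactly the delicate point, and this is where deformation invariance of plurigenera for klt families — equivalently, an extension theorem for pluricanonical sections — is indispensable. The klt, hence rational, singularities of $\sX$ in a neighbourhood of $\sX_{t_0}$ provided by Fact~\ref{remark:inversion_adjunction} are precisely what make such an invariance statement applicable here.
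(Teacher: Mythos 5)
Your overall skeleton (degree zero on fibers by flatness, effectivity on fibers, then Grauert plus base change to trivialize $\sO_\sX(mK_{\sX/T})$ over a Zariski open cover of $T$) is sound, and steps (a), (c) and your final paragraph agree with the paper's concluding argument. The gap is step (b): you invoke ``deformation invariance of plurigenera for klt families'' as a citable theorem. No such theorem exists in this generality. Siu--P\u{a}un type invariance requires the fibers to be smooth; the known extension results for singular central fibers require at least canonical singularities and, outside the general-type case, are entangled with open abundance-type problems. Here the fibers are merely klt and the total space is singular, and the rational singularities of $\sX$ provided by Fact~\ref{remark:inversion_adjunction} give no section-extension statement. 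Worse, what you assume is essentially equivalent to the lemma being proved: once $h^0\big(\sX_t,\sO_{\sX_t}(mK_{\sX_t})\big)=1$ for all $t$, the conclusion follows formally, as your own last paragraph shows. So the proposal is circular at its only nontrivial point.

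The step is also unnecessary, because the propagation should run in the opposite direction. The paper argues as follows: (i) by Lemma~\ref{lemma:locus_nef} applied to $\pm m_0K_{\sX/T}$, one has $K_{\sX_t}\equiv 0$ for all $t$ outside a countable union of proper Zariski closed subsets; (ii) on each such fiber $K_{\sX_t}$ is torsion by Nakayama's abundance result \cite[Corollary V 4.9]{nakayama04}, the same result you use at $t_0$; (iii) for each \emph{fixed} $m$, the locus $\big\{t \in T : mm_0K_{\sX_t}\sim_{\mathbb{Z}} 0\big\}$ is \emph{closed}, because it is the intersection of the two loci $\big\{h^0\big(\sX_t,\sO_{\sX_t}(\pm mm_0K_{\sX_t})\big)\ge 1\big\}$, each closed by the ordinary semicontinuity theorem (a Cartier divisor $D$ on a connected normal projective variety satisfies $D\sim 0$ if and only if both $D$ and $-D$ are effective); (iv) since $T$ is irreducible and is not a countable union of proper closed subsets, there is a single $m_1$ whose closed locus contains the very general set of (ii)--(iii), hence equals $T$. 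In other words, effectivity is spread from very general fibers to special fibers, which is the cheap upper-semicontinuity direction; you tried to spread it from $t_0$ outward to the special fibers, which is exactly the hard extension direction that is unavailable. Your closing claim that invariance of plurigenera is ``indispensable'' is therefore the opposite of the truth: the whole point of the paper's proof is to avoid it, at the modest price of the countability argument fixing $m_1$, which your route never confronts because the unavailable black box hides it.
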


\begin{proof}
Let $m_0$ be a positive integer such that $m_0K_{\sX/T}$ is a Cartier divisor.
Applying Lemma \ref{lemma:locus_nef} in the analytic setting or \cite[Proposition 1.4.14]{lazarsfeld1}
in the algebraic setting
to $\pm m_0K_{\sX/T}$, we see that 
the set of points $t \in T$ such that $K_{\sX_t}\not\equiv 0$ is a countable union of proper Zariski closed subsets.
If $K_{\sX_t}\equiv 0$, then $K_{\sX_t}$ is torsion by \cite[Corollary V 4.9]{nakayama04}. Note also that
we have $m_0K_{\sX_t}\sim_\mathbb{Z} {m_0K_{\sX/T}}_{|\sX_t}$ by the adjunction formula.
Let now $m$ be a positive integer. Because the functions 
$t \mapsto h^{0}\big(\sX_t,\sO_{\sX_t}(\pm mm_0K_{\sX_t})\big)$ are upper semicontinuous in the Zariski 
topology on $T$ (see \cite[Chapter III, Theorem 4.12]{banica} in the analytic setting or \cite[Theorem 12.8]{hartshorne77} in the algebraic setting), the set of points $t$ on $T$ such that 
$mm_0K_{\sX_t} \sim_\mathbb{Z} 0$ is closed.
It follows that 
there exists a positive integer $m_1$ such that 
$m_1m_0K_{\sX_t} \sim_\mathbb{Z} 0$ for all $t \in T$.
From \cite[Chapter III, Theorem 4.12]{banica} in the analytic setting or \cite[Corollary 12.9]{hartshorne77} in the algebraic setting, we see that
$f_*\sO_{\sX}(m_1m_0K_{\sX/T})$ is a line bundle. 
Let $(T^\alpha)_{\alpha \in A}$ be a Zariski open affine cover of $T$ such that
${f_*\sO_{\sX}(m_1m_0K_{\sX/T})}_{|T^\alpha}\cong \sO_{T^\alpha}$ for all $\alpha \in A$. Set $\sX^\alpha:=f^{-1}(T^\alpha)$, and let $t \in T^\alpha$.
Because the formation of $f_*\sO_{\sX}(m_1m_0K_{\sX/T})$
commutes with base change (see \textit{loc. cit.}), any non-zero section of $m_1m_0{K_{\sX/T}}_{|\sX_t}$ extends to a global section of $m_1m_0{K_{\sX/T}}_{|\sX^\alpha}$ that is nowhere vanishing in a neighborhood of $\sX_t$. Refining the cover, if necessary, we conclude that $m_1m_0{K_{\sX/T}}_{|\sX^\alpha} \cong \sO_{\sX^\alpha}$. This completes the proof of the lemma.
\end{proof}

\begin{prop}\label{proposition:smoothing_disk_versus_alg_curve}
Let $X$ be a normal projective variety with klt singularities. Suppose that $K_X \equiv 0$. 
If $X$ admits a projective $\mathbb{Q}$-Gorenstein smoothing, then 
there exist a normal projective variety $Y$, a quasi-\'etale cover $Y \to X$, and a projective smoothing $\sY \to C$ of $Y$ 
over an algebraic curve such that $K_{\sY/C}\sim_\mathbb{Z} 0$.
\end{prop}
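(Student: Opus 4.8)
\section*{Proof proposal}

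The plan is to reduce first to the case of a Gorenstein central fibre with genuinely trivial canonical class, by passing to a global index-one cover of the total space, and then to spread the resulting analytic family out to an algebraic curve by a Hilbert-scheme argument, carried out so as to keep the relative canonical class \emph{exactly} trivial. First I would apply Lemma~\ref{lemma:num_trivial_versus_torsion} to the given $\mathbb{Q}$-Gorenstein smoothing $f\colon\sX\to\Delta$: here $\sX$ is normal (this is built into the definition of a $\mathbb{Q}$-Gorenstein smoothing), $\Delta$ is smooth, $K_{\sX/\Delta}$ is $\mathbb{Q}$-Cartier, and the fibres are connected with klt singularities (the central fibre $X$ is klt by hypothesis, the other fibres are smooth, and after shrinking $\Delta$ all fibres are connected and $\sX$ is klt near $X$ by Fact~\ref{remark:inversion_adjunction}). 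Since $K_X\equiv0$, the lemma yields an integer $N>0$ with $NK_{\sX/\Delta}\sim_{\mathbb Z}0$ after shrinking $\Delta$, because every line bundle on the disk is trivial. Taking $N$ to be the order of the torsion class of $K_{\sX/\Delta}$, I would form the associated cyclic cover $\gamma\colon\sY\to\sX$, a quasi-\'etale cover with $\sY$ normal and klt and $K_{\sY/\Delta}\sim_{\mathbb Z}0$. The composition $\sY\to\Delta$ is flat ($\sY$ is normal and dominates the smooth curve $\Delta$) and projective, with smooth fibres over $\Delta\smallsetminus\{0\}$; restricting the cyclic-cover construction to the central fibre identifies $\gamma^{-1}(X)$, up to normalisation, with the index-one cover of $X$ attached to $K_X$, which is quasi-\'etale over $X$ since its branch locus meets $X$ in codimension $\ge2$. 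Stein-factorising $\sY\to\Delta$ and replacing $\Delta$ by the normalisation of the Stein factor (again a disk) I obtain a connected central fibre $Y$; then $Y\to X$ is a quasi-\'etale cover, $Y$ has canonical Gorenstein singularities with $K_Y\sim_{\mathbb Z}0$, and $\sY\to\Delta$ is a projective smoothing of $Y$ with $K_{\sY/\Delta}\sim_{\mathbb Z}0$.

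Next I would produce the algebraic smoothing of $Y$ by the argument of Lemma~\ref{lemma:smoothing_disk_versus_alg_curve}, refined so as to preserve triviality of the relative canonical class. Embedding $\sY\subset\mathbb{P}^M\times\Delta$ and letting $H$ be a component of $\Hilb(\mathbb{P}^M)$ whose analytification contains the image of the classifying map, I shrink $H$ so that the universal family $U\to H$ has normal Cohen-Macaulay fibres and so that $\omega_{U/H}$ is invertible near $[Y]$, which is possible since $Y$ is Gorenstein. On this locus $\omega_{U_t}$ is a line bundle, so $K_{U_t}\sim_{\mathbb Z}0$ is equivalent to $h^0(U_t,\omega_{U_t})\ge1$ together with $h^0(U_t,\omega_{U_t}^{-1})\ge1$; by upper semicontinuity the locus $H_1\subset H$ where both hold is Zariski closed. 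It contains $[Y]$ and the whole image of the disk, hence has dimension $\ge1$ and contains points with smooth fibre. Choosing a smooth curve $C\to H_1$ through $[Y]$ and a general such point and setting $\sZ:=C\times_HU$, I obtain a projective smoothing of $Y$ over $C$ with $\sZ$ normal (Fact~\ref{remark:normality}) and $K_{\sZ/C}$ Cartier. Since now \emph{every} fibre of $\sZ\to C$ has trivial canonical class, the pushforward $h_*\omega_{\sZ/C}$ is a line bundle whose formation commutes with base change, so the generator of $\omega_Y$ extends to a nowhere-vanishing section of $\omega_{\sZ/C}$ in a neighbourhood of the central fibre; shrinking $C$ gives $K_{\sZ/C}\sim_{\mathbb Z}0$, which is the asserted smoothing.

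I expect the main obstacle to be exactly the passage from a \emph{torsion} relative canonical class to an \emph{honestly trivial} one. Lemma~\ref{lemma:num_trivial_versus_torsion} on its own only yields torsion, and a general algebraic curve through $[Y]$ in the Hilbert scheme would have generic member with nontrivial torsion canonical class, so the trivialising section of $\omega_Y$ need not extend over it. This is what forces the two-step device above: first trivialising the torsion by an index-one cover at the analytic level (which is precisely why a cover $Y\to X$, and not $X$ itself, appears in the statement), and then confining the spreading-out curve to the Zariski-closed locus $H_1$ on which the canonical class remains trivial, so that the relevant $h^0(\omega)$ is constant and base change applies. The remaining points are bookkeeping that should nevertheless be checked with some care: the reflexive base-change identification of the central fibre of the index-one cover with the index-one cover of $X$ (legitimate because reflexive sheaves are determined in codimension one and the branch locus meets $X$ in codimension $\ge2$), and the use of Stein factorisation together with normality of the total space to ensure that all fibres, in particular $Y$, are connected.
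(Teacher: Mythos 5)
Your proposal is correct. Its first half coincides with the paper's proof: both apply Lemma~\ref{lemma:num_trivial_versus_torsion} to the given family over $\Delta$ and then pass to the index-one cyclic cover of the total space, so that the new central fibre $Y$ is a quasi-\'etale cover of $X$ with canonical Gorenstein singularities and $K_Y\sim_{\mathbb Z}0$, together with a smoothing over the disk having trivial relative canonical class. The second half genuinely differs. The paper invokes Lemma~\ref{lemma:smoothing_disk_versus_alg_curve} as a black box to get an algebraic smoothing of $Y$ with $K_{\sY/C}$ merely Cartier, and then \emph{repeats} the torsion-plus-cyclic-cover argument over the algebraic curve (``arguing as above''), so its final quasi-\'etale cover is a composite $Y_1\to Y\to X$ of two covers. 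You avoid the second cover by rerunning the Hilbert-scheme argument with an extra constraint: the spreading-out curve is confined to the locus $H_1$ where $h^0(\omega_{U_t})\ge 1$ and $h^0\big(\omega_{U_t}^{-1}\big)\ge 1$, which on the shrunk $H$ (integral normal fibres, $\omega_{U/H}$ invertible, base change for dualizing sheaves as in \cite[Theorem 3.5.6]{conrad}) is precisely the Zariski-closed locus of fibres with trivial canonical class; constancy of $h^0(\omega_{\sZ_t})=1$ then lets Grauert base change extend the trivializing section of $\omega_Y$ to a nowhere-vanishing section of $\omega_{\sZ/C}$ after deleting from $C$ the finite image of its zero locus. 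Your route buys a smaller, more explicit final cover (essentially just the index-one cover of $X$) and a single application of the cyclic-cover construction; the paper's route buys modularity, since Lemma~\ref{lemma:smoothing_disk_versus_alg_curve} is quoted verbatim and reused elsewhere (e.g.\ for Lemma~\ref{lemma:smoothing_disk_versus_alg_curve2}). Two small points you should pin down, both at the same level of implicitness as in the paper's own lemma: the curve must be drawn inside a single irreducible component of $H_1$ containing both $[Y]$ and points with smooth fibre (this holds because the image of the connected disk lies in one component of the closed set $H_1$), and in the degenerate case where the classifying map $\Delta\to H$ is constant, $Y$ is already smooth and a constant family over any algebraic curve satisfies the statement.
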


\begin{proof}
Let $f\colon \sX \to \Delta$ be a projective smoothing of $X$.
Applying Lemma \ref{lemma:num_trivial_versus_torsion}, we see that
$K_{\sX/\Delta}$ is torsion.

By \cite[Definition 2.52 and Lemma 2.53]{kollar_mori}, there exists a normal analytic variety $\sY$ and a
finite cover $\gamma \colon \sY \to \sX$, \'etale over $\sX_{\textup{reg}}$,
such that $K_{\sY/\Delta}\sim_\mathbb{Z} 0$. 
Note that $\gamma_t\colon\sY_t \to \sX_t$ is \'etale for any $t \neq 0$. In particular, $\sY_t$ is smooth if $t \neq 0$. Note also that
$\sY_0$ is normal, and that $\gamma_0\colon \sY_0 \to \sX_0$ is a quasi-\'etale cover.
Applying \cite[Proposition 3.16]{kollar97} to $\gamma_0$, we see that
$Y:=\sY_0$ has klt singularities. Moreover, by the adjunction formula, we have $K_{Y}\sim_\mathbb{Z} 0$.

By Lemma \ref{lemma:smoothing_disk_versus_alg_curve}, $Y$ admits a projective $\mathbb{Q}$-Gorenstein smoothing over an algebraic curve. Arguing as above, we see that there exist a normal projective variety 
$Y_1$ as well as a quasi-\'etale cover $\gamma_1\colon Y_1 \to Y$, and a projective smoothing $\sY_1 \to C_1$ of $Y_1$ over an algebraic curve such that $K_{\sY_1/C_1}\sim_\mathbb{Z} 0$, completing the proof of the proposition.
\end{proof}

\section{K\"ahler-Einstein metrics on smoothable spaces}
\label{sec1}

In this section, we work in the following setting, referred to later as the analytic setting. 

\subsection{The analytic setting}\label{sec:anset}
Let $f \colon \sX \to \Delta$ be a projective smoothing of 
a normal projective (analytic) variety $X$  
such that $K_{\sX/\Delta} \sim_{\mathbb Z} 0$. Suppose moreover that $X$ has canonical singularities.  
Let $\mathscr L$ be a relatively ample line bundle on $\X$, and set $\L_t:=\L_{|\X_t}$.
Given $t \neq 0$, we denote by $g_t$ the unique Ricci-flat K\"ahler metric on $\sX_t$ 
whose fundamental form $\omega_t$ satisfies $[\omega_t]=[c_1(\sL_t)]\in H^2(\sX_t,\mathbb{R})$.
The existence of $g_t$ is established in \cite{Yau78}. We denote by $g_0$ the Ricci-flat K\"ahler metric on $X_{\reg}$ given by 
\cite[Theorem 7.5]{EGZ} applied to $(\sX_0,\sL_0)$.

\subsection{Smooth convergence on $\Xr$}
In the setting described above, let $\Phi\colon\Xr \times \Delta \to \X$ be a smooth embedding such that $\Phi(x,t)\in \X_t$ for any $(x,t)\in \Xr \times \Delta$ and $\Phi_{|\Xr\times \{0\}}=\mathrm{Id}_{\Xr}$ (see \cite[p. 1547]{RZ0}). Let us write $\Phi_t:=\Phi_{|\Xr\times \{t\}}\colon\Xr \to \X_t$.
If $I_t$ denotes the complex structure on $\X_t$ for $t\neq 0$, and $\Xr$ for $t=0$, then it is not hard to see that $\Phi_{t}^*I_t$ converges to $I_0$ in the $\sC^{\infty}_{\rm loc}(\Xr)$-topology. The following theorem is due to Rong-Zhang (see \cite[Theorem 1.4]{RZ}).

\begin{thm}[\cite{RZ}]
\label{thm:rz}
The family of Riemannian metrics $\Phi_{t}^*g_t$ converges to $g_0$ when $t\to 0$, in the $\sC^{\infty}_{\rm loc}(\Xr)$-topology. 
\end{thm}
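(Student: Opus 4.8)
The plan is to exhibit the Ricci-flat metrics $g_t$ as solutions of a family of complex Monge-Amp\`ere equations, to prove a priori estimates that are uniform in $t$ on compact subsets of $\Xr$, and then to identify the limit by a uniqueness argument. First I would use the hypothesis $K_{\sX/\Delta}\sim_{\mathbb Z}0$ to fix a nowhere-vanishing section of the relative canonical bundle; this induces a smooth family of volume forms $\Om_t$ on the fibers $\sX_t$ (and on $\Xr$ for $t=0$) which, transported through $\Phi$, depends smoothly on $t$ up to $t=0$. The Ricci-flat condition on $\om_t$ is then equivalent to $\om_t^{\,n}=c_t\,\Om_t$, where $c_t=[\om_t]^n/\int_{\sX_t}\Om_t$; since $[\om_t]=c_1(\sL_t)$ gives a topological (hence locally constant) numerator and $\int_{\sX_t}\Om_t\to\int_{\Xr}\Om_0<\infty$ by the canonical singularities of $X$, one checks $c_t\to c_0$. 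Choosing a smooth family of reference K\"ahler forms $\theta_t$ with $[\theta_t]=c_1(\sL_t)$ (for instance curvature forms of a smooth positive metric on $\sL$) and writing $\om_t=\theta_t+\ddc\varphi_t$, the equation becomes $(\theta_t+\ddc\varphi_t)^n=c_t\,\Om_t$.

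The core of the argument is a uniform $\sC^0$ estimate followed by local interior estimates. For the $\sC^0$ bound I would invoke the pluripotential estimates of Ko\l odziej and \cite{EGZ}: because $X$ has canonical singularities, the densities $\Om_t/\theta_t^{\,n}$ are bounded in $L^p$ for some $p>1$ uniformly as $t\to 0$, which yields a uniform bound $\|\varphi_t\|_{L^\infty}\le C$. Next, on a fixed compact set $K\subset\Xr$ the fibers are smooth and carry, via $\Phi$, complex structures $\Phi_t^*I_t$ converging smoothly to $I_0$; pulling back by $\Phi_t$ and using $g_0$ as a background metric, a Chern-Lu (Yau-Aubin) Laplacian inequality combined with the uniform $\sC^0$ estimate gives a uniform second-order bound $\Lambda_{g_0}\!\big(\Phi_t^*\om_t\big)\le C_K$ on $K$. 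Evans-Krylov together with Schauder bootstrapping then upgrade this to uniform $\sC^k_{\rm loc}(\Xr)$ bounds on $\Phi_t^*\varphi_t$ for every $k$.

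With these estimates, Arzel\`a-Ascoli shows that any sequence $t_j\to 0$ has a subsequence along which $\Phi_{t_j}^*g_{t_j}$ converges in the $\sC^{\infty}_{\rm loc}(\Xr)$-topology to a Ricci-flat K\"ahler metric $g_\infty$ on $\Xr$ whose K\"ahler form represents $c_1(\sL_0)$ and solves $\om_\infty^{\,n}=c_0\,\Om_0$, the very equation defining $g_0$. By the uniqueness part of \cite[Theorem 7.5]{EGZ}, $g_\infty=g_0$; as the limit is then independent of the subsequence, the full family $\Phi_t^*g_t$ converges to $g_0$. The main obstacle is the uniform Laplacian estimate near the singular fiber: the ambient geometry degenerates as $t\to 0$, so the Chern-Lu argument must be run with constants controlled only on compact subsets of $\Xr$, crucially exploiting the smooth convergence $\Phi_t^*I_t\to I_0$ and the uniform $\sC^0$ bound to keep the relevant curvature and torsion terms bounded independently of $t$.
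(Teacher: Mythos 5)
Your proposal is correct in outline and shares the paper's overall architecture (uniform $\sC^0$ bound on potentials, second-order bound from Chern--Lu combined with the Monge--Amp\`ere equation, local higher-order estimates, extraction of sequential limits, identification of the limit by uniqueness), but it differs from the paper's argument in the key analytic step. The paper, following Rong--Zhang, obtains the uniform $L^\infty$ bound on $\vp_t$ by Moser iteration, which requires uniform Sobolev and Poincar\'e constants for $(\X_t,\om_t)$; these rest in turn on a uniform diameter estimate, a substantial geometric input of \cite{RZ}. You instead propose a pluripotential (Ko\l odziej/\cite{EGZ}-type) estimate, trading those geometric bounds for a uniform $L^p$ bound ($p>1$) on the densities $\Om_t/\theta_t^n$ along the degeneration. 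That uniformity is the one point you assert rather than prove: it does hold here --- since $\sX$ has klt singularities near $X$ by inversion of adjunction (Fact~\ref{remark:inversion_adjunction}), semicontinuity of singularity exponents in the sense of Demailly--Koll\'ar yields a uniform Skoda-type integrability bound --- but it is a genuine ingredient, not a formality, and supplying it is exactly the route taken in later pluripotential treatments of families of singular K\"ahler--Einstein metrics. A second, smaller difference: the paper pins down the limit through the normalization $\int_{X_t}\vp_t\om_t^n=0$, identifying $\vp_\infty$ with $\vp_0$, whereas you identify the limiting current directly by uniqueness of the solution of the global Monge--Amp\`ere equation; for this to be complete you should note that the bounded limit potential extends as a psh potential across $X\smallsetminus \Xr$ and that its Bedford--Taylor Monge--Amp\`ere puts no mass on the pluripolar set $X\smallsetminus \Xr$, so that the limit really is the \cite{EGZ} solution rather than merely a local solution on $\Xr$. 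As for what each approach buys: yours avoids the diameter/Sobolev machinery entirely and is self-contained on the Monge--Amp\`ere side, while the geometric estimates underlying the paper's route are not wasted effort, since they are reused for the Gromov--Hausdorff statement of Theorem~\ref{thm:gh}, which a purely pluripotential argument does not give.
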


\begin{proof}[Sketch of proof]
We recall the main arguments of the proof for the convenience of the reader. 

The first step is to show that $\om_t$ and $\om_{\rm FS,t}$ differ by the $\ddc$ of an uniformly bounded 
potential $\vp_t$, that we can assume to be normalized by $\int_{X_t} \vp_t\om_t^n=0$. Here, $\om_{\rm FS,t}={\om_{\rm FS}}_{|\X_t}$ under the embedding 
$\X \hookrightarrow \P^N \times \Delta$ given by sections of $\L$. This is proved in \cite[Lemma 3.1]{RZ} 
using Moser iteration given that the Sobolev and Poincar\'e constants of $(\X_t,\om_t)$ are bounded, which in 
turn is a consequence of the diameter estimate \cite[Theorems B.1 and 2.1]{RZ}. 

The $L^{\infty}$-estimate on $\vp_t$ combined 
with Chern-Lu inequality implies that $\om_t \ge C^{-1}\om_{\rm FS,t}$ for some uniform $C>0$ (see the 
proof of \cite[Lemma 3.2]{RZ}). This enables to get estimates at any order on $\Phi_t^*\vp_t$ over compact subsets of $\Xr$. Therefore, one can extract smooth sequential limits $\vp_{\infty}$ of $\Phi_t^*\vp_t$ over $\Xr$: $\vp_{\infty}$ is bounded on the whole $\Xr$ and as $\Phi_{t}^*I_t$ converges to $I_0$, $\vp_{\infty}$ satisfies the same Monge-Amp\`ere equation as $\vp_0$ on $\Xr$, where $\vp_0$ is the normalized potential of $\om_0$ with respect to $\om_{FS,0}$. By the choice of the normalization, $\vp_{\infty}$ and $\vp_0$ coincide, and therefore $\Phi_t^*\vp_t$ converges locally smoothly to $\vp_0$, which concludes the proof. 
\end{proof}

\begin{rem}\label{cstr}
An important observation is that if $\X_t$ admits another complex structure $J_t$ compatible with $g_t$ (in the sense that $J_t$ is unitary and parallel with respect to $g_t$), then 
one can extract sequences $t_j\to 0$ such that $\Phi_{t_j}^*J_{t_j}$ converges locally smoothly to a complex structure $J_0$ over $\Xr$ which is compatible with respect to $g_0$. Indeed, $\Phi_{t}^*J_{t}$ is almost $g_0$-unitary and $g_0$-parallel in the sense that the tensors $(\Phi_t^*J_t)^{*_0}(\Phi_t^*J_t)-\mathrm {Id}$ and $\nabla^{g_0}(\Phi_t^*J_t)$ converge to zero on $\Xr$, where $*_0$ denotes the adjoint with respect to $g_0$. The first property gives order $0$ estimates while the second one enables to get higher order ones. Arzela-Ascoli theorem combined with a diagonal argument yield the result. 
\end{rem}

\subsection{Identification of the Gromov-Hausdorff limit}
One can actually understand the global limit of $(\X_t,g_t)$, but this requires considerable more work and will not be used in the following. The next result is essentially 
contained in \cite[Section 3]{SSY} (see also \cite[Section 4.2]{LWX}). As the context is slightly different here since we deal with Calabi-Yau manifolds rather than log-Fano manifolds, we will recall their main arguments and point out the slight changes to operate.

\begin{thm}[\cite{SSY,LWX}]
\label{thm:gh}
In the standard setting, the metric spaces $(\X_t, \om_t)$ converge in the Gromov-Hausdorff sense to 
$(X,d)$ when $t\to 0$ for some metric $d$ on $X$. Moreover, 
the convergence is smooth on $X_{\reg}$, and the restriction of $d$ to $X_{\reg}$ is induced by the Riemannian metric $g_0$.
\end{thm}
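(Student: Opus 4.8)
The plan is to fix an arbitrary sequence $t_j\to 0$, extract a subsequence along which $(\X_{t_j},\om_{t_j})$ converges in the Gromov--Hausdorff sense, identify the limit with $(X,d)$ for a metric $d$ that does not depend on the chosen subsequence, and then upgrade subconvergence to convergence of the whole family. For the precompactness, observe that the metrics $g_t$ are Ricci-flat, hence have Ricci curvature bounded below, while the total volume $\vol(\X_t,\om_t)=\tfrac{1}{n!}\int_{\X_t}\om_t^n=\tfrac{1}{n!}c_1(\L_t)^n$ (with $n=\dim_{\mathbb C}X$) is independent of $t$ by flatness of $f$, and \cite[Theorems B.1 and 2.1]{RZ} supply a uniform upper bound on $\mathrm{diam}(\X_t,\om_t)$. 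Gromov's compactness theorem then yields a subsequence converging to a compact length space $(Z,d_Z)$, and Bishop--Gromov volume comparison (using the fixed total volume together with the diameter bound) shows the convergence is non-collapsed. By the Cheeger--Colding and Cheeger--Colding--Tian structure theory for non-collapsed limits of Einstein metrics, $Z=\mathcal R\sqcup\mathcal S$ with $\mathcal R$ open and carrying a smooth Ricci-flat K\"ahler metric $g_Z$, the convergence being smooth on $\mathcal R$, and $\mathcal S$ closed of Hausdorff codimension at least $4$.

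Following the arguments of \cite{SSY} (compare \cite{LWX}), the next step is to endow $Z$ with an algebraic structure. The uniform $L^\infty$-bound on the K\"ahler potential $\vp_t$ relative to $\om_{\rm FS,t}$ obtained in the proof of Theorem~\ref{thm:rz}, combined with H\"ormander $L^2$-estimates and the non-collapsed Cheeger--Colding structure, yields a uniform partial $C^0$-estimate for the polarizations $\L_t$, in the spirit of the work of Donaldson--Sun. Consequently, for $k\gg 0$ the embeddings $\X_t\hookrightarrow\P^{N_k}$ given by $\L_t^{\otimes k}$ converge, after suitable unitary changes of coordinates, to an embedding of $Z$ as a normal projective variety $W\subset\P^{N_k}$, in such a way that $\mathcal R=W_{\reg}$ as complex manifolds and $\mathcal S=W_{\sing}$.

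It remains to identify $W$ with $X$. By the partial $C^0$-estimate, the polarized Gromov--Hausdorff limit $Z$ coincides with the flat limit of the $\L_t^{\otimes k}$-embedded fibers inside the relevant Hilbert scheme; since that flat limit is the $\L^{\otimes k}$-embedding of the central fiber $X=\X_0$, we obtain $W\cong X$ as projective varieties. The smooth convergence then matches the geometry precisely: by Theorem~\ref{thm:rz} and the accompanying convergence $\Phi_t^*I_t\to I_0$ of complex structures, the maps $\Phi_t$ induce a biholomorphic isometry from $(X_{\reg},g_0)$ onto the regular part $(\mathcal R,g_Z)=W_{\reg}$, and a comparison of total volumes $\vol(\mathcal R,g_Z)=\lim_j\vol(\X_{t_j},\om_{t_j})=\vol(X_{\reg},g_0)$ shows this isometry is onto $\mathcal R$ up to a set of measure zero. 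This identifies $(Z,d_Z)$ with $(X,d)$, with $d|_{X_{\reg}}$ induced by $g_0$ and the convergence smooth on $X_{\reg}$.

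Finally, because the limit $(X,d)$ and the metric $d$ are pinned down independently of the subsequence $t_j$, a routine subsequence argument promotes subconvergence to convergence of the full family as $t\to 0$. I expect the identification step to be the main obstacle: establishing the partial $C^0$-estimate with respect to the \emph{arbitrary} polarization $\L$ (rather than $-K_{\X_t}$, as in the log-Fano setting of \cite{SSY,LWX}) and matching the abstract Cheeger--Colding limit $Z$ with the given variety $X$. The delicate point is to rule out any spurious bubbling, so that the metric singular set $\mathcal S$ is exactly $X_{\sing}$ and the limit complex structure agrees with that of $X$; the volume comparison above together with the smooth convergence of Theorem~\ref{thm:rz} are precisely what control this.
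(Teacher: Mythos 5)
Your overall architecture---Gromov precompactness from the Ricci-flat, diameter and volume bounds, the Donaldson--Sun/Cheeger--Colding structure theory giving the abstract limit $Z$ the structure of a normal projective variety $W$, and then an identification $W\cong X$---is the same as the paper's, which follows \cite[Section 3]{SSY}. The gap is in the identification step, which you yourself flag as the main obstacle but then dispose of with a claim that begs the question. You write that ``by the partial $C^0$-estimate, the polarized Gromov--Hausdorff limit $Z$ coincides with the flat limit of the $\L_t^{\otimes k}$-embedded fibers inside the relevant Hilbert scheme.'' This is precisely the assertion that needs proof, and the partial $C^0$-estimate alone does not give it. The embedding produced by the Donaldson--Sun theory is the map $i_t$ defined by an $L^2(\om_t)$-orthonormal basis of $H^0\big(\X_t,\L_t^{\otimes k}\big)$, whereas the flat limit $X$ is taken in the \emph{fixed} relative embedding $\X\hookrightarrow \P^{M}\times\Delta$ coming from $\L^{\otimes k}$. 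For each $t$ the two embeddings of $\X_t$ differ by a transformation $g_t\in\mathrm{PGL}(\mathbb C^{M+1})$, and if the $g_t$ were allowed to diverge in $\mathrm{PGL}(\mathbb C^{M+1})$, the cycle limit of $i_{t_j}(\X_{t_j})$ could be a projective variety entirely different from $\lim_j \X_{t_j}=X$: divergent sequences of projective transformations do change flat and cycle limits. So your two families of embedded varieties cannot be conflated without further argument.

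The missing ingredient---and the heart of the paper's proof, following the proof of \cite[Theorem 3.1]{SSY}---is to show that $g_t$ evolves in a \emph{compact} subset of $\mathrm{PGL}(\mathbb C^{M+1})$. This is where the analytic estimates enter a second time, beyond their role in the partial $C^0$-estimate: one uses the uniform bound $\|\vp_t\|_{L^{\infty}}\le C$ (which, crucially, persists under Veronese re-embeddings), the constancy of $\int_{\X_t}\om_t^n$, and the lower bound $\om_t\ge C^{-1}\om_{\rm FS,t}$ (equivalently, the bound on $\tr_{\om_t}(\om_{\rm FS,t})$ from the Chern--Lu inequality) to control the change of basis between the reference sections and the $L^2$-orthonormal ones, in both directions. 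Granting this compactness, one extracts $g_{t_j}\to g\in\mathrm{PGL}(\mathbb C^{M+1})$ and concludes $W=\lim_j i_{t_j}(\X_{t_j})=\lim_j g_{t_j}(\X_{t_j})=g(X)$, whence $W\cong X$ as projective varieties; the smooth convergence on $X_{\reg}$ and the identification of $d|_{X_{\reg}}$ with the distance of $g_0$ then follow from \cite{DS} (or, as you suggest, from Theorem~\ref{thm:rz} together with a volume comparison). Without the compactness of the $g_t$, your chain of identifications breaks at its first link.
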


\begin{proof}[Sketch of proof]
The fundamental results of \cite{DS} give the convergence in the sense of the statement above to a 
projective variety endowed with a singular K\"ahler-Einstein metric, but the key point is to identify 
that variety with the central fiber $X$ of our family.

As in the proof of Theorem~\ref{thm:rz}, one can write  $\om_t=\om_{\rm FS,t}+\ddc \vp_t$ with $||\vp_t||_{L^{\infty}}\le C$, $C$ being independent of $t\in \Delta^*$. This has been seen to imply the estimate $\om_t \ge C^{-1}\om_{\rm FS,t}$. Also, it is easy but important to notice that 
the bound on the potential remains valid after Veronese re-embeddings (see \cite[Lemma 3.3]{SSY}).

The next step consists in comparing the two different embeddings of $\X_t$ into a large $\P^{M}$, one 
being given by a Veronese embedding (using a large power of $\L$, say $k$) and the other one being an 
embedding by $L^2$ sections of $\L_t^{\otimes k}$ with respect to $\om_t$. In the following, we may assume that the 
first re-embedding is the identity while the second one will be denoted by $i_t$. So in $\P^{M}$, we have 
two isomorphic varieties, namely $\X_t$ and $i_t(\X_t)$. The variety $i_t(\X_t)$ can be obtained from $\X_t$ 
by a transformation $g_t\in \mathrm{PGL}(\mathbb C^{M+1})$ that sends a basis $(s_{0,t}, \ldots, s_{M,t})$ 
of $H^0\big(\X_t,\L_t^{\otimes k}\big)$ to a $L^2$-orthonormal basis $(\sigma_{0,t}, \ldots, \sigma_{M,t})$, where the first basis of sections is obtained by the standard embedding induced by $\L^{\otimes k}$. Then one can prove using the estimates 
on $||\vp_t||_{L^{\infty}},\int_{X_t}\om_t^n, \tr_{\om_t}(\om_{\rm FS,t})$ that $g_t$ evolves in a 
compact subset of $\mathrm{PGL}(\mathbb C^{M+1})$ (see proof of \cite[Theorem 3.1]{SSY}).

The last step invokes the main result of \cite{DS} that guarantees in this context that one can choose $k$ 
and a sequence $t_j \to 0$ such that the re-embedding $i_{t_j}(X_{t_j})$ converges both in the sense of 
cycle and in the Gromov-Hausdorff sense to a projective variety $W$ (for some metric on $W$). Up to extract 
a subsequence, one can assume that $g_{t_j}$ converges to $g\in \mathrm{PGL}(\mathbb C^{M+1})$ so 
that $W=g(X)$ as projective varieties. The rest is a consequence of \cite{DS}.
\end{proof}

\section{Relative Albanese morphism}
\label{sec2}

The Albanese morphism, that is, the universal morphism to an abelian variety (see \cite{serre}), is one important tool in the study of varieties with trivial canonical divisor. If $X$ is a projective variety with rational singularities, recall from \cite[Lemma 8.1]{kawamata85} that
$\Pic^\circ(X)$ is an abelian variety, and that $\Alb(X)\cong\big(\Pic^\circ(X)\big)^\vee$. 
Moreover, the Albanese morphism $a_X \colon X \to \Alb(X)$ is induced by the universal line bundle.
In particular, $\dim \Alb(X) = h^1(X,\sO_X)$. 
The following result of Kawamata describes the Albanese map of $X$ (see also \cite[Section 3]{brion_action}). 

\begin{prop}[{\cite[Proposition 8.3]{kawamata85}}]\label{proposition:kawamata}
Let $X$ be a normal projective variety $X$ with
canonical singularities. Assume that $K_X$ is numerically trivial. Then $K_X$
is torsion, the Albanese map $a_X \colon X \to \Alb(X)$ is surjective and has the structure of an \'etale-trivial fiber bundle.
More precisely, the following holds. The neutral component $A$ of the automorphism group $\textup{Aut}(X)$ of $X$ is an abelian variety. It acts on $\Alb(X)$ by translation, compatibly with its action on $X$. The induced morphism $A \to \Alb(X)$ is an isogeny and the fiber product over $\Alb(X)$ decomposes as a product 
$$X \times_{\Alb(X)} A \cong F \times A,$$
where $F$ is a fiber of $a_X$.
\end{prop}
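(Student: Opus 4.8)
The plan is to extract the entire structure from the action of the identity component $A := \Aut^0(X)$ of the automorphism group scheme, whose Lie algebra is $H^0(X, T_X)$, and to argue by induction on $\dim X$ (the case $\dim X \le 1$ being immediate). Since $X$ has canonical, hence klt, singularities and $K_X \equiv 0$, the divisor $K_X$ is torsion by \cite[Corollary V.4.9]{nakayama04}, so $\kappa(X) = 0$; and as canonical singularities are rational, $\Alb(X)$ is an abelian variety and $a_X$ is induced by the universal line bundle (\cite[Lemma 8.1]{kawamata85}). I would first check that $A$ is an abelian variety: writing a Chevalley decomposition $1 \to L \to A \to B \to 1$ with $L$ affine and $B$ abelian, a nontrivial $L$ would contain a copy of $\mathbb{G}_a$ or $\mathbb{G}_m$, and the orbit closures of a general point of $X_{\textup{reg}}$ under such a subgroup would form a covering family of rational curves; then $X$ would be uniruled and $\kappa(X) = -\infty$, a contradiction. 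By functoriality of the Albanese every $g \in A$ induces an automorphism of $\Alb(X)$, which is a translation because $A$ is connected and the group automorphisms of an abelian variety form a discrete set. Fixing a base point I obtain a homomorphism $\alpha \colon A \to \Alb(X)$, $g \mapsto a_X(g\cdot x_0) - a_X(x_0)$, satisfying $a_X(g\cdot x) = \alpha(g) + a_X(x)$.

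The crux of the proof --- and the step I expect to be the main obstacle --- is to show that $a_X$ is surjective with connected fibers and is an \emph{étale-trivial fiber bundle}. Surjectivity and connectedness of the fibers follow from $\kappa(X) = 0$ via Kawamata's structure theory for varieties of vanishing Kodaira dimension. The substantial point is isotriviality: over the abelian variety $\Alb(X)$ one has $K_{X/\Alb(X)} = K_X \equiv 0$, and the direct images $a_{X*}\,\omega_{X/\Alb(X)}^{[m]}$ are weakly positive; since $\Alb(X)$ carries no positivity ($c_1(\Alb(X)) = 0$), these sheaves must be numerically flat, which forces the associated variation of Hodge structure to be trivial and hence the family to have no moduli. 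One then upgrades isotriviality to étale-local triviality, obtaining a finite étale (indeed isogeny) cover $u \colon \wt A \to \Alb(X)$ and a finite group $G = \ker u$ with $X \cong (F \times \wt A)/G$, where $F$ is a general fiber and $G$ acts diagonally. Alternatively, and in the spirit of Section~\ref{sec1}, one could run a Bochner argument for the singular Ricci-flat metric $g_0$ on $X_{\textup{reg}}$, identifying holomorphic vector fields with parallel ones to the same effect.

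Granting this structure, I would deduce that $\alpha$ is an isogeny. The translations of $\Alb(X)$ lift through $u$ to translations of $\wt A$, which commute with $G$ and therefore descend to automorphisms of $X = (F \times \wt A)/G$; hence $\alpha$ is surjective. For the kernel, comparing Albaneses in $X \cong (F \times \wt A)/G$ gives $\dim \Alb(X) = \dim \Alb(F) + \dim \wt A = \dim \Alb(F) + \dim \Alb(X)$, so $\Alb(F) = 0$; the general fiber $F$ being normal with canonical singularities and $K_F \equiv 0$, the induction hypothesis applied to $F$ yields $h^0(F, T_F) = \dim \Alb(F) = 0$. Thus there are no vector fields tangent to the fibers of $a_X$, i.e. $\ker \alpha$ is finite, and $\alpha$ is an isogeny.

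Finally, the splitting is formal. As $\alpha$ is an isogeny it is finite étale, so the base change of $a_X$ along $\alpha$ realizes $a_X$ as an étale-trivial fiber bundle. Concretely, writing $F = a_X^{-1}(0)$, the equivariance $a_X(g \cdot x) = \alpha(g) + a_X(x)$ gives mutually inverse morphisms
\[
F \times A \longrightarrow X \times_{\Alb(X)} A, \quad (y,g) \mapsto (g\cdot y, g), \qquad (x,g) \mapsto (g^{-1}\cdot x, g),
\]
since $a_X(g\cdot y) = \alpha(g)$ for $y \in F$ and $a_X(g^{-1}\cdot x) = 0$ whenever $a_X(x) = \alpha(g)$. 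This yields the asserted isomorphism $X \times_{\Alb(X)} A \cong F \times A$ and completes the proof.
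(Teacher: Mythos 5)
The paper offers no proof of this proposition to compare against: it is quoted from Kawamata \cite[Proposition 8.3]{kawamata85} (with a pointer to \cite[Section 3]{brion_action}), so your argument has to stand entirely on its own. Its bookends are fine: torsion of $K_X$ via \cite[Corollary V.4.9]{nakayama04}, $\Alb(X)$ an abelian variety via rational singularities and \cite[Lemma 8.1]{kawamata85}, $A=\Aut^0(X)$ an abelian variety via Chevalley plus non-uniruledness, the homomorphism $\alpha$ coming from translations, and the purely formal fiber-product splitting at the end. The genuine gap sits exactly where you flag ``the main obstacle'', and it is not filled. The chain ``weakly positive, and $\Alb(X)$ carries no positivity, hence numerically flat'' is false as stated: an ample line bundle on an abelian variety is weakly positive and certainly not numerically flat. (What is true is that weak positivity makes the relevant determinant pseudoeffective, hence nef on an abelian variety, and one still needs a genuine input from $\kappa(X)=0$ to force numerical triviality.) More seriously, even granting $\mathrm{Var}(a_X)=0$, the sentence ``one then upgrades isotriviality to \'etale-local triviality, obtaining $X\cong (F\times\wt A)/G$ with $G=\ker u$ acting diagonally'' asserts, without proof, essentially the full content of the proposition: birational isotriviality of a $K$-trivial family does not formally yield a biregular fiber-bundle structure (birationally equivalent $K$-trivial varieties need not be isomorphic in dimension $\ge 3$), and a bundle structure in which the deck group acts through translations of an isogeny cover of $\Alb(X)$ is precisely what is being proved. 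Since your derivation that $\alpha$ is an isogeny, and hence the final splitting, feed entirely off this black box, the proof is circular at its core.

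Two further points. First, your Albanese count for the \'etale quotient is off: for $X\cong(F\times\wt A)/G$ one gets $h^1(X,\mathcal O_X)=h^1(F,\mathcal O_F)^G+\dim\wt A$, so the argument only gives vanishing of the $G$-invariant part of $H^1(F,\mathcal O_F)$, not $\Alb(F)=0$. Second, the standard way to repair the whole proof --- and the route implicit in the reference \cite[Section 3]{brion_action} that the paper itself supplies --- avoids isotriviality altogether: show $\dim A=h^0(X,T_X)=h^1(X,\mathcal O_X)=\dim\Alb(X)$ (Hodge symmetry for klt spaces with trivial canonical class, after passing to an index-one cover to identify $T_X$ with $\Omega_X^{[n-1]}$), and show $\ker\alpha$ is affine by the Nishi--Matsumura theorem, hence finite because $A$ is an abelian variety; then $\alpha$ is an isogeny for dimension reasons, surjectivity of $a_X$ follows at once from the equivariance $a_X(g\cdot x)=\alpha(g)+a_X(x)$, and your final formal splitting closes the proof with no isotriviality statement ever needed.
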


In this section we extend Proposition \ref{proposition:kawamata} to the relative setting.

\begin{prop}\label{proposition:Abelian_factor}
Let $\sX$ be a normal variety, and let
$f \colon \sX \to C$ be a flat projective morphism with connected fibers onto a smooth connected algebraic curve $C$.
Suppose that $\sX_t$ has klt singularities for any point $t \in C$.
Suppose furthermore that $K_{\sX/C}\sim_\mathbb{Z} 0$.
\begin{enumerate}
\item The neutral component $\textup{Aut}^0(\sX_t)$ of the automorphism group of $\sX_t$ is an abelian variety, and 
the algebraic groups $\textup{Aut}^0(\sX_t)$ fit together to form an abelian scheme $\sA$ over $C$.
\item Suppose that $f$ has a section, and consider the morphism 
$$a_{\sX/C}\colon \sX \to \big(\Pic^\circ(\sX/C)\big)^\vee$$
induced by the universal line bundle. 
Let 
$0\colon C \to \big(\Pic^\circ(\sX/C)\big)^\vee$ denotes the neutral section, and 
set $\sY=a_{\sX/C}^{-1}(0)$.
Then $\sY \to C$ is a flat projective morphism with normal connected fibers, and 
there exists a relative isogeny $\sA \to \big(\Pic^\circ(\sX/C)\big)^\vee$ such that
$$\sX \times_{\big(\Pic^\circ(\sX/C)\big)^\vee}\sA \cong \sY\times_{C} \sA.$$
\end{enumerate}
\end{prop}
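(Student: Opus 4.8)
The plan is to promote Kawamata's Proposition \ref{proposition:kawamata} to the relative setting by combining a fiberwise application of that result with representability statements for the relative automorphism and Picard functors, and then to check that the resulting objects are abelian schemes. For part (1), I would first note that for each $t\in C$ the fiber $\sX_t$ is a normal projective variety with klt, hence canonical, singularities and $K_{\sX_t}\sim_{\mathbb Z}0$ by adjunction (using $K_{\sX/C}\sim_{\mathbb Z}0$). Thus Kawamata's result applies fiberwise and $\textup{Aut}^0(\sX_t)$ is an abelian variety of dimension $h^1(\sX_t,\sO_{\sX_t})$. To glue these into a scheme over $C$, I would invoke the representability of the relative automorphism functor $\underline{\textup{Aut}}(\sX/C)$ by a group scheme locally of finite type over $C$ (this holds since $f$ is flat projective with, after passing to a finite cover or étale localization if needed, geometrically connected fibers); the neutral-component construction then yields a smooth group scheme $\sA\to C$ whose fibers are the $\textup{Aut}^0(\sX_t)$. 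The key point is \emph{properness} of $\sA\to C$: since every geometric fiber is an abelian variety (hence proper) and the relative dimension $h^1(\sX_t,\sO_{\sX_t})$ is constant in $t$ — constancy following from the deformation invariance of Hodge numbers for a family with rational singularities and trivial relative canonical class, e.g. via the local freeness of $R^1 f_*\sO_\sX$ — the group scheme $\sA\to C$ is smooth, proper, with connected fibers, i.e. an abelian scheme.

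For part (2), I would use that a section of $f$ trivializes the obstruction to representability of the relative Picard functor, so that $\Pic^\circ(\sX/C)$ exists as a group scheme over $C$; its fibers are $\Pic^\circ(\sX_t)$, which are abelian varieties by \cite[Lemma 8.1]{kawamata85}, so dualizing gives an abelian scheme $\big(\Pic^\circ(\sX/C)\big)^\vee$ over $C$ whose fiber over $t$ is $\Alb(\sX_t)$. The relative Albanese morphism $a_{\sX/C}$ is induced by the universal (Poincaré) line bundle exactly as in the absolute case, and restricts on each fiber to the absolute Albanese map $a_{\sX_t}$. First I would define $\sY:=a_{\sX/C}^{-1}(0)$ and check that $\sY\to C$ is flat with fibers $\sY_t=a_{\sX_t}^{-1}(0_t)$, which by Proposition \ref{proposition:kawamata} is a single fiber $F_t$ of the (étale-trivial fiber bundle) Albanese map of $\sX_t$; normality and connectedness of the fibers follow since $\sY_t$ is a fiber of an étale-trivial bundle over an abelian variety and $\sX_t$ is normal and connected. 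Flatness of $\sY\to C$ I would obtain from constancy of the Hilbert polynomial of the fibers together with the fact that $\sY$ is cut out inside $\sX$ as a fiber of a morphism to a smooth $C$-scheme.

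The relative isogeny is the heart of the argument. Fiberwise, Proposition \ref{proposition:kawamata} furnishes an isogeny $\textup{Aut}^0(\sX_t)\to\Alb(\sX_t)$, compatible with the translation actions, and I would realize this globally as the composite $\sA\to\big(\Pic^\circ(\sX/C)\big)^\vee$ defined by letting $\sA$ act on the relative Albanese target through its action on $\sX$ and evaluating at the neutral section. Both sides being abelian schemes of the same relative dimension over $C$ and the map being a fiberwise isogeny, it is a relative isogeny. The promised isomorphism
$$\sX \times_{\big(\Pic^\circ(\sX/C)\big)^\vee}\sA \;\cong\; \sY\times_C \sA$$
is then the relative avatar of Kawamata's decomposition $X\times_{\Alb(X)}A\cong F\times A$: the action map $\sA\times_C\sY\to\sX$ (sending a pair over $t$ to the image of $\sY_t$ under the automorphism) lands in the fiber product on the left because translating the neutral fiber $\sY$ by an automorphism moves its Albanese image by the corresponding translation, and one checks it is an isomorphism fiber by fiber, whence an isomorphism by flatness. \textbf{The main obstacle} I anticipate is not the formal structure but the representability and base-change bookkeeping: ensuring that the relative Picard and automorphism functors are representable in this mildly singular analytic/algebraic family (the section hypothesis is used precisely to secure $\Pic^\circ(\sX/C)$), and above all establishing the \emph{constancy} of $h^1(\sX_t,\sO_{\sX_t})$ across $C$ — without which neither $\sA$ nor the Albanese target is an abelian \emph{scheme} — which I expect to deduce from local freeness of $R^1f_*\sO_\sX$, itself a consequence of the rational singularities of the total space (Fact \ref{remark:inversion_adjunction}) and the invariance of $\chi(\sO_{\sX_t})$ together with the vanishing of higher cohomology forced by $K_{\sX_t}\sim_{\mathbb Z}0$.
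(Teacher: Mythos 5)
Your proposal follows essentially the same route as the paper's proof: fiberwise use of Proposition \ref{proposition:kawamata}, representability of the relative automorphism and Picard functors, constancy of $h^1(\sX_t,\sO_{\sX_t})$ coming from rational singularities of the total space, and the relative isogeny obtained by letting $\sA$ act on the dual Picard scheme and evaluating at the neutral section. However, two of your justifications would fail as written. First, you assert that the neutral-component construction ``yields a smooth group scheme'' and reserve the constancy of $h^1(\sX_t,\sO_{\sX_t})$ for properness. This is backwards: smoothness of $\textup{Aut}(\sX/C)$ over $C$ is not automatic (the fiber dimension of a relative automorphism scheme can jump, and one cannot even form the fiberwise identity components into a subgroup scheme before knowing smoothness), and it is exactly here that constancy is needed. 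The paper first proves that $h^0(\sX_t,T_{\sX_t})$ is independent of $t$ --- using constancy of $h^{n-1}(\sX_t,\sO_{\sX_t})$, Hodge symmetry \cite[Proposition 6.9]{gkp_bo_bo}, and the isomorphism $\Omega^{[n-1]}_{\sX_t}\cong T_{\sX_t}$ furnished by the trivial canonical class --- then deduces smoothness from \cite[Expos\'e VI$_{\textup{B}}$, Proposition 1.6]{sga3} and forms $\sA$ by \cite[Expos\'e VI$_{\textup{B}}$, Th\'eor\`eme 3.10]{sga3}; properness of $\sA$ then follows from quasi-projectivity and fiberwise properness via \cite[Corollaire 15.7.11]{ega28}, with no appeal to constancy of the relative dimension.

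Second, your proposed justification of the constancy itself --- invariance of $\chi(\sO_{\sX_t})$ ``together with the vanishing of higher cohomology forced by $K_{\sX_t}\sim_{\mathbb Z}0$'' --- is wrong: trivial canonical class forces no such vanishing, since by Serre duality $h^n(\sX_t,\sO_{\sX_t})=h^0\big(\sX_t,\sO_{\sX_t}(K_{\sX_t})\big)=1$ for every fiber (think of abelian varieties or K3 surfaces). Moreover, local freeness of $R^1f_*\sO_\sX$ alone does not yield constancy of $h^1$ without base-change control. The argument that works is the paper's Lemma \ref{lemma:continuity}: compactify and resolve the total space, apply Koll\'ar's torsion-freeness theorem \cite[Corollary 3.9]{kollar_higher2} so that all the sheaves $R^i\wh f_*\sO_{\wh\sX}$ are locally free, descend to $\sX$ using rational singularities, and conclude with Grothendieck's theorem. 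Separately, when you let $\sA$ ``act on the relative Albanese target through its action on $\sX$'', the descent of the action through $a_{\sX/C}$ is precisely what requires proof; the paper obtains it from the rigidity lemma \cite[Proposition 6.1]{git}, and you should invoke it explicitly. With these repairs your outline coincides with the paper's proof.
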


Before we give the proof of Proposition \ref{proposition:Abelian_factor}, we need the following auxiliary result.

\begin{lemma}\label{lemma:continuity}
Let $f \colon \sX \to C$ be a flat projective morphism with connected fibers from a 
normal variety $\sX$ onto a smooth connected algebraic curve $C$.
Suppose that $\sX$ has rational singularities. Then $h^i\big(\sX_t,\sO_{\sX_t}\big)$ is independent of $t \in C$ for any integer $i \ge 0$.
\end{lemma}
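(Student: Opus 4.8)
The plan is to reduce the assertion to the local freeness of the coherent sheaves $R^if_*\sO_\sX$ on $C$ for every $i$. By the semicontinuity theorem the function $t \mapsto h^i(\sX_t,\sO_{\sX_t})$ is upper semicontinuous, so only jumps over the finitely many special points must be excluded. Since $f$ is flat and $\sO_\sX$ is $C$-flat, standard base-change theory over the reduced base $C$ shows that if every $R^if_*\sO_\sX$ is locally free, then the formation of each commutes with base change and $h^i(\sX_t,\sO_{\sX_t}) = \rk R^if_*\sO_\sX$ is constant. (Concretely: represent $Rf_*\sO_\sX$ locally by a bounded complex $K^\bullet$ of finite free $\sO_C$-modules; local freeness of all its cohomology sheaves forces, by descending induction, every cocycle and coboundary sheaf to be a subbundle, whence $H^i(K^\bullet\otimes k(t)) \cong H^i(K^\bullet)\otimes k(t)$.) As $C$ is a smooth curve, local freeness is equivalent to torsion-freeness, so I am reduced to proving that each $R^if_*\sO_\sX$ is torsion-free.

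To access torsion-freeness I would pass to a resolution. Let $\pi\colon \wt{\sX} \to \sX$ be a resolution of singularities. Since $\sX$ has rational singularities, $\pi_*\sO_{\wt{\sX}} = \sO_\sX$ and $R^j\pi_*\sO_{\wt{\sX}} = 0$ for $j>0$, so $Rf_*\sO_\sX \cong Rg_*\sO_{\wt{\sX}}$, where $g := f\circ\pi\colon \wt{\sX}\to C$. The morphism $g$ is projective, and it is flat because $\wt{\sX}$ is integral and $g$ is dominant onto the smooth curve $C$; denote by $n$ its relative dimension, and note that its fibers, being Cartier divisors in the smooth space $\wt{\sX}$, are Cohen-Macaulay. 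Thus it suffices to prove that every $R^ig_*\sO_{\wt{\sX}}$ is torsion-free.

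Here the key input is duality combined with a torsion-freeness theorem. By relative Grothendieck duality for the flat Cohen-Macaulay morphism $g$, one has $R\mathcal{H}om_C\big(Rg_*\sO_{\wt{\sX}},\,\sO_C\big) \cong Rg_*\,\omega_{\wt{\sX}/C}[n]$. By Koll\'ar's torsion-freeness theorem the sheaves $R^jg_*\omega_{\wt{\sX}/C}$ are torsion-free, hence locally free on the smooth curve $C$. Because $C$ is regular of dimension one, $\mathcal{E}xt^p_C(-,\sO_C)=0$ for $p\ge 2$, so the spectral sequence $\mathcal{E}xt^p_C\big(R^{-q}g_*\sO_{\wt{\sX}},\sO_C\big) \Rightarrow \mathcal H^{p+q}\big(Rg_*\omega_{\wt{\sX}/C}[n]\big)$ has only the two rows $p=0,1$ and degenerates. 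For each index it therefore exhibits a torsion-free sheaf $R^jg_*\omega_{\wt{\sX}/C}$ as an extension of some $\mathcal{H}om_C(R^ig_*\sO_{\wt{\sX}},\sO_C)$ by the sheaf $\mathcal{E}xt^1_C(R^{i'}g_*\sO_{\wt{\sX}},\sO_C)$. The latter is a torsion sheaf, yet it injects into a torsion-free sheaf; hence all the terms $\mathcal{E}xt^1_C(R^ig_*\sO_{\wt{\sX}},\sO_C)$ vanish, which is precisely the statement that every $R^ig_*\sO_{\wt{\sX}}$ is torsion-free, as required.

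I expect the main obstacle to be exactly this last point: controlling the possible jump of cohomology along the special fibers, i.e. showing that the higher direct images do not acquire torsion. Semicontinuity and the constancy of the Euler characteristic $\chi(\sX_t,\sO_{\sX_t})$ alone do not suffice, since jumps in even and odd degrees could a priori cancel; it is the combination of Koll\'ar's torsion-freeness theorem with relative duality on the curve that rules this out. I emphasize that this route never requires the individual fibers $\sX_t$ to have rational singularities: one works throughout with the sheaves $R^if_*\sO_\sX$ on $C$, and the base-change identity $h^i(\sX_t,\sO_{\sX_t}) = \rk R^if_*\sO_\sX$ is obtained as an output of their local freeness rather than assumed.
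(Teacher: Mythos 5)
Your proof is correct, and its skeleton coincides with the paper's: pass to a resolution, use rational singularities and the Leray spectral sequence to identify $R^if_*\sO_\sX$ with the higher direct images of the structure sheaf of a smooth total space, establish that these are torsion-free (hence locally free on the smooth curve), and conclude by cohomology and base change. The one genuine difference is how torsion-freeness is obtained. The paper first compactifies $C$ and $\sX$, desingularizes the compactification, and then directly cites \cite[Corollary 3.9]{kollar_higher2}, which gives torsion-freeness of $R^i\wh f_*\sO_{\wh \sX}$ for the smooth projective total space; the compactification is precisely what legitimizes the citation, since Koll\'ar's theorems are stated for projective varieties. You instead work with the resolution $g\colon\wt{\sX}\to C$ over the possibly non-compact curve and re-derive that statement: relative Grothendieck duality for the flat Cohen--Macaulay morphism $g$ converts Koll\'ar's torsion-freeness for $R^jg_*\omega_{\wt{\sX}/C}$ into torsion-freeness of $R^ig_*\sO_{\wt{\sX}}$, via the two-column $\mathcal{E}xt$ spectral sequence on the regular one-dimensional base, where the torsion sheaves $\mathcal{E}xt^1_C\big(R^ig_*\sO_{\wt{\sX}},\sO_C\big)$ inject into torsion-free sheaves and so vanish. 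This is in essence a proof of the very corollary the paper cites, so your argument is more self-contained; the small price is that you invoke Koll\'ar's theorem for a projective morphism onto a non-compact curve, a relative version whose cleanest justification is again compactification (extend $g$ over $\wb{C}$ and resolve only over the boundary, which is harmless since torsion-freeness is local on the base). Both arguments then finish identically with Grothendieck's base-change theorem.
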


\begin{proof}
Let $\wb C$ be a smooth compactification of $C$, and let $\wb \sX$ be a projective compactification of $\sX$ such that $f$ extends to a morphism $\wb f \colon \wb \sX \to \wb C$. Finally, let $\nu \colon \wh \sX \to \wb \sX$ be a desingularization of $\wb \sX$. Set $\wh f := \wb f \circ \nu$. It follows from \cite[Corollary 3.9]{kollar_higher2} that the sheaves $R^i\wh f_*\sO_{\wh \sX}$ are torsion free, and hence locally free. The relative Leray spectral sequence 
gives ${R^i\wh f_*\sO_{\wh \sX}}_{|C}\cong R^i f_*\sO_{\sX}$ using
the assumption that $\sX$ has rational singularities.
This in turn implies that $h^i\big(\sX_t,\sO_{\sX_t}\big)$ is independent of $t \in C$
by a theorem of Grothendieck (see \cite[Theorem 12.11]{hartshorne77}).
\end{proof}

\begin{proof}[Proof of Proposition \ref{proposition:Abelian_factor}]
Note that ${\sX}_t$ has canonical singularities since $K_{{\sX}_t}\sim_\mathbb{Z} 0$ is a Cartier divisor by the adjunction formula.
Write $n:=\dim \sX - 1$. 
From Lemma \ref{lemma:continuity} and Fact \ref{remark:inversion_adjunction}, we see that
$h^{n-1}\big(\sX_t,\sO_{\sX_t}\big)$ is independent of $t \in C$. On the other hand, we have 
$h^{n-1}\big(\sX_t,\sO_{\sX_t}\big)=h^0\big(\sX_t,\Omega^{[n-1]}_{\sX_t}\big)$ by 
\cite[Proposition 6.9]{gkp_bo_bo}. It follows that 
$h^{0}\big(\sX_t,T_{\sX_t}\big)$ is independent of $t \in C$ since 
$\Omega^{[n-1]}_{\sX_t}\cong T_{\sX_t}$. Note that $h^{n-1}\big(\sX_t,\sO_{\sX_t}\big)=h^{1}\big(\sX_t,\sO_{\sX_t}\big)$ by Serre duality, so that $h^0(\sX_t,T_{\sX_t})=h^{1}\big(\sX_t,\sO_{\sX_t}\big)$.
By
\cite[Expos\'e VI$_{\textup{B}}$, Proposition 1.6]{sga3}, the group scheme $\textup{Aut}(\sX/C)$ is then
smooth over $C$. 
Note that the existence of $\textup{Aut}(\sX/C)$ is guarantee by \cite{fga221}.
Let $t\in C$, and denote by $\textup{Aut}^0(\sX_t)$ the neutral component of 
the automorphism group $\textup{Aut}(\sX_t)$ of $\sX_t$. 
We have $\dim \textup{Aut}^0(\sX_t)=h^0(\sX_t,T_{\sX_t})=h^{1}\big(\sX_t,\sO_{\sX_t}\big)$.
We claim that $\textup{Aut}^0(\sX_t)$ is an abelian variety. If not, by a theorem of Chevalley, 
$\textup{Aut}^0(\sX_t)$ contains an algebraic subgroup isomorphic either to $\mathbb{G}_a$ or to $\mathbb{G}_m$,
and hence, $X$ is uniruled. On the other hand, $\kappa(\sX_t)=0$ since $K_{\sX_t} \sim_\mathbb{Z} 0$ and
$\sX_t$ has canonical singularities, yielding a contradiction. 
Now, recall from \cite[Expos\'e VI$_{\textup{B}}$, Th\'eor\`eme 3.10]{sga3} 
that the algebraic groups
$\textup{Aut}^0(\sX_t)$ fit together to form a group scheme $\sA$
over $C$. Note that $\sA$ is quasi-projective over $C$ (see \cite{fga221}). Applying \cite[Corollaire 15.7.11]{ega28}, we see that $\sA$ is proper, and hence projective over $C$. In particular, $\sA$ is an abelian scheme over $C$.

\medskip

Suppose from now on that $f$ has a section $s\colon C \to \sX$, and consider the relative Picard scheme
$\Pic(\sX/C)$ whose existence is guaranteed by \cite[Th\'eor\`eme 3.1]{fga232}.
By \cite[Lemma 8.1]{kawamata85}, $\Pic^\circ(\sX_t)$ is an abelian variety for any point $t$ on $C$.
As above, the algebraic groups $\Pic^\circ(\sX_t)$ fit together to form a group scheme 
$\Pic^\circ(\sX/C)$ over $C$. Note that $\Pic^\circ(\sX/C) \subset \Pic(\sX/C)$ is an open subscheme, quasi-projective over $C$ by \cite[Theorem 5]{neron_models}. Using \cite[Corollaire 15.7.11]{ega28} again, we conclude that it is projective over $C$. Let $\big(\Pic^\circ(\sX/C)\big)^\vee$ be the dual abelian scheme (see \cite[Corollary 6.8]{git}), and consider the morphism 
$$a_{\sX/C}\colon \sX \to \big(\Pic^\circ(\sX/C)\big)^\vee$$
induced by the universal line bundle.
Consider also the action $\phi\colon \sA\times_C\sX \to \sX$, and the second projection $p_2\colon \sA\times_C\sX \to \sX$. By the rigidity lemma (\cite[Proposition 6.1]{git}), there exists a morphism
$$\psi \colon \sA \times_C \big(\Pic^\circ(\sX/C)\big)^\vee \to \big(\Pic^\circ(\sX/C)\big)^\vee$$
such that the square

\centerline{
\xymatrix{
\sA\times_C \sX \ar[rr]^{\phi}\ar[d]_{(\textup{Id}\times a_{\sX/C})} && \sX \ar[d]^{a_{\sX/C}} \\
\sA \times_C \big(\Pic^\circ(\sX/C)\big)^\vee \ar[rr]_{\psi} && \big(\Pic^\circ(\sX/C)\big)^\vee 
}
}
\noindent is commutative. It follows that $\sA$ acts on $\big(\Pic^\circ(\sX/C)\big)^\vee$ over $C$. By Proposition \ref{proposition:kawamata}, the induced morphism $$\sA \to \big(\Pic^\circ(\sX/C)\big)^\vee$$ is a relative
isogeny. Let $0\colon C \to \big(\Pic^\circ(\sX/C)\big)^\vee$ denotes the neutral section, and set $\sY:=a_{\sX/C}^{-1}(0)$.
Using Proposition \ref{proposition:kawamata} again, one readily checks that 
$$\sX \times_{\big(\Pic^\circ(\sX/C)\big)^\vee}\sA \cong \sY\times_{C} \sA.$$
The proposition then follows easily.
\end{proof}

We end this section with an extension result for differential forms.
We feel that this result might be of independent interest.

\begin{prop}\label{prop:extension_differential_forms}
Let $\sX$ be a normal variety with rational singularities, and let 
$f \colon \sX \to C$ be a flat projective morphism with connected normal fibers onto a smooth connected algebraic curve $C$. Let $0\le p\le \dim \sX - 1$ be an integer, and assume that 
$h^0\big(\sX_t,\Omega_{\sX_t}^{[p]}\big)=h^p(\sX_{t},\sO_{\sX_{t}})$ for any point $t$ on $C$. 
Given $t_0 \in C$ and $\omega \in H^0\big(\sX_{t_0},\Omega_{\sX_{t_0}}^{[p]}\big)$, and
replacing $C$ by an open neighborhood of $t_0$ if necessary, there exists 
$\Omega \in H^0\big(\sX,\Omega_{\sX/C}^{[p]}\big)$
such that $\Omega_{|(\sX_{t_0})_{\textup{reg}}}=\omega_{|(\sX_{t_0})_{\textup{reg}}}$.
\end{prop}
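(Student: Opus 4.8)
The plan is to realize the desired relative forms as global sections of the coherent sheaf $\mathcal{F}:=\Omega_{\sX/C}^{[p]}$ and to prove that its pushforward $f_*\mathcal{F}$ is locally free near $t_0$, with fibre canonically identified with the space of reflexive $p$-forms on $\sX_{t_0}$; a preimage of $\omega$ then spreads out to the sought-after $\Omega$. First I would record the structural facts about $\mathcal{F}$. Let $\sX^\circ\subset\sX$ denote the smooth locus of $f$; by flatness this is exactly the set of points at which the fibre is smooth, so its complement is $\bigcup_{t}\Sing(\sX_t)$, which has codimension $\ge 2$ in $\sX$ because the fibres are normal and $C$ is a curve. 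Since $\Omega^p_{\sX^\circ/C}$ is locally free and agrees with $\mathcal{F}$ on $\sX^\circ$, one has $\mathcal{F}=j_*\Omega^p_{\sX^\circ/C}$ for $j\colon\sX^\circ\hookrightarrow\sX$, and $\mathcal{F}$ is reflexive, hence torsion-free over the base, hence flat over the smooth curve $C$. Moreover, for a smooth morphism relative differentials commute with base change, so $\mathcal{F}|_{(\sX_t)_{\textup{reg}}}\cong\Omega^p_{(\sX_t)_{\textup{reg}}}$; restricting a section of $\mathcal{F}_t:=\mathcal{F}\otimes k(t)$ to $(\sX_t)_{\textup{reg}}$ then yields a natural map $\alpha_t\colon H^0(\sX_t,\mathcal{F}_t)\to H^0(\sX_t,\Omega^{[p]}_{\sX_t})$.

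The heart of the argument is to show that $h^0(\sX_t,\mathcal{F}_t)$ is constant in $t$. Set $b:=h^0(\sX_t,\Omega^{[p]}_{\sX_t})$; this is constant by Lemma \ref{lemma:continuity} together with the hypothesis $h^0(\sX_t,\Omega^{[p]}_{\sX_t})=h^p(\sX_t,\sO_{\sX_t})$. For the upper bound I would note that $\mathcal{F}_t$ is torsion-free: since $\mathcal{F}$ is reflexive (so $S_2$) and $\sX_t$ is a Cartier divisor cut out by an $\mathcal{F}$-regular function, $\mathcal{F}_t$ satisfies $S_1$ on the integral scheme $\sX_t$. Hence $\alpha_t$ is injective, giving $h^0(\sX_t,\mathcal{F}_t)\le b$ for every $t$. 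For the matching lower bound I would pass to the generic fibre: reflexive hull commutes with the flat base change $\Spec k(C)\to C$, so $\mathcal{F}_\eta\cong\Omega^{[p]}_{\sX_\eta}$ is reflexive on the normal $k(C)$-variety $\sX_\eta$, whence $h^0(\sX_\eta,\mathcal{F}_\eta)=b$. By upper semicontinuity the generic value is the minimal value of $t\mapsto h^0(\sX_t,\mathcal{F}_t)$, so $h^0(\sX_t,\mathcal{F}_t)\ge b$ for all $t$. The two bounds force $h^0(\sX_t,\mathcal{F}_t)\equiv b$.

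Once constancy is in hand, Grothendieck's cohomology-and-base-change theorem (\cite[Theorem 12.11]{hartshorne77}) shows that $f_*\mathcal{F}$ is locally free of rank $b$ and that the natural map $(f_*\mathcal{F})\otimes k(t)\to H^0(\sX_t,\mathcal{F}_t)$ is an isomorphism for every $t$. Since $\alpha_{t_0}$ is an injection between spaces of equal dimension $b$, it is an isomorphism, so $\omega$ has a unique preimage $\widetilde\omega\in H^0(\sX_{t_0},\mathcal{F}_{t_0})$. Shrinking $C$ to a neighbourhood of $t_0$ over which $f_*\mathcal{F}$ is free, the surjection $H^0(C,f_*\mathcal{F})\twoheadrightarrow(f_*\mathcal{F})\otimes k(t_0)$ lets me lift $\widetilde\omega$ to a section $\Omega\in H^0(C,f_*\mathcal{F})=H^0\big(\sX,\Omega^{[p]}_{\sX/C}\big)$; by construction $\alpha_{t_0}\big(\Omega|_{\sX_{t_0}}\big)=\omega$, which is precisely the equality $\Omega|_{(\sX_{t_0})_{\textup{reg}}}=\omega|_{(\sX_{t_0})_{\textup{reg}}}$.

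The step I expect to be the main obstacle is the constancy of $h^0(\sX_t,\mathcal{F}_t)$, and more precisely the fact that $\mathcal{F}_t$ need not be reflexive on $\sX_t$: a reflexive sheaf is only $S_2$, and restricting to the Cartier divisor $\sX_t$ would require $S_3$ to preserve reflexivity, so one cannot simply identify $\mathcal{F}_t$ with $\Omega^{[p]}_{\sX_t}$. This is exactly the point where the hypothesis $h^0\big(\Omega^{[p]}_{\sX_t}\big)=h^p(\sO_{\sX_t})$ (via the constancy of $h^p(\sO_{\sX_t})$ from Lemma \ref{lemma:continuity}) is indispensable: it supplies the fixed dimension $b$ against which the injectivity bound and the generic-fibre bound are squeezed. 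Everything else is routine bookkeeping with reflexive hulls, depth, and base change.
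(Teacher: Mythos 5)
Your proposal is correct and follows essentially the same strategy as the paper's proof: both compare $h^0$ of the restriction ${\Omega^{[p]}_{\sX/C}}|_{\sX_t}$ with $h^0\big(\sX_t,\Omega^{[p]}_{\sX_t}\big)$ via an injection, squeeze these bounds against the constant value $h^p\big(\sX_t,\sO_{\sX_t}\big)$ supplied by Lemma~\ref{lemma:continuity} together with the Hodge-symmetry hypothesis, and conclude by Grauert's theorem, local freeness of $f_*\Omega^{[p]}_{\sX/C}$, and base change. Your only departures are cosmetic: you justify the injectivity by a depth argument (a reflexive, hence $S_2$, sheaf restricted to a Cartier divisor is $S_1$, hence torsion-free on the integral fiber) where the paper pushes forward a twisted exact sequence from the regular locus, and you compute the generic value of $h^0$ on the scheme-theoretic generic fiber where the paper invokes reflexivity of the restriction to a general closed fiber -- two equivalent ways of packaging the same steps.
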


\begin{rem}
\label{rem:hodge}
In the setup of Proposition \ref{prop:extension_differential_forms} above, suppose moreover that $\sX_t$ has klt singularities for any $t\in C$. Then Hodge symmetry 
$h^0\big(\sX_t,\Omega_{\sX_t}^{[p]}\big)=h^p(\sX_{t},\sO_{\sX_{t}})$ holds by \cite[Proposition 6.9]{gkp_bo_bo}.
\end{rem}

\begin{proof}[Proof of Proposition \ref{prop:extension_differential_forms}]
Denote by $i \colon \sX_{\textup{reg}} \into \sX$ the natural morphism, 
so that $\Omega^{[p]}_{\sX/C}\cong i_*\Omega_{\sX_{\textup{reg}/C}}^{p}$ by \cite[Proposition 1.6]{hartshorne80}. 
The sheaf $\Omega^{[p]}_{\sX/C}$ is torsion free, and hence flat over $C$. This implies in particular that
$f_*\Omega^{[p]}_{\sX/C}$ is locally free
of rank 
$h^0\Big(\sX_{t_1},{\Omega^{[p]}_{\sX/C}}_{|\sX_{t_1}}\Big)$ where $t_1 \in C$ is a general point.
On the other hand, if $t_1$ is general enough, then the sheaf ${\Omega^{[p]}_{\sX/C}}_{|\sX_{t_1}}$ is reflexive, and hence 
${\Omega^{[p]}_{\sX/C}}_{|\sX_{t_1}} \cong \Omega^{[p]}_{\sX_{t_1}}$.

Given $t\in C$, consider the exact sequence
$$0 \to \sO_{\sX_{\textup{reg}}}(-\sX_{t}\cap \sX_{\textup{reg}}) \to \sO_{\sX_{\textup{reg}}}\to 
\sO_{\sX_{t}\cap \sX_{\textup{reg}}} \to 0,$$ and
let $j_t \colon \sX_t \cap\sX_{\textup{reg}} \into \sX_t$ denotes the natural morphism.
Tensoring the above exact sequence with the sheaf $\Omega_{\sX_{\textup{reg}/C}}^p$ and applying $i_*$ yield an exact sequence
$$0 \to \Omega^{[p]}_{\sX/C} \otimes \sO_{\sX}(-\sX_t) \to \Omega^{[p]}_{\sX/C} \to 
(j_t)_*\Big({\Omega^{p}_{\sX_{\textup{reg}}/C}}_{|\sX_{t}\cap \sX_{\textup{reg}}}\Big).$$
Now, by \cite[Expos\'e VIII, Corollaire 2.3]{sga2}, 
$(j_t)_*\Big({\Omega^{p}_{\sX_{\textup{reg}}/C}}_{|\sX_{t}\cap \sX_{\textup{reg}}}\Big)$ is a coherent sheaf of $\sO_{\sX_t}$-modules. On the other hand, the complement of $\sX_t \cap\sX_{\textup{reg}}$ in $\sX_t$ has 
codimension at least $2$, and ${\Omega^{p}_{\sX_{\textup{reg}}/C}}_{|\sX_{t}\cap \sX_{\textup{reg}}}$ is locally free. It follows that $(j_t)_*\Big({\Omega^{p}_{\sX_{\textup{reg}}/C}}_{|\sX_{t}\cap \sX_{\textup{reg}}}\Big)$
is reflexive, and thus 
$(j_t)_*\Big({\Omega^{p}_{\sX_{\textup{reg}}/C}}_{|\sX_{t}\cap \sX_{\textup{reg}}}\Big) \cong \Omega_{\sX_t}^{[p]}$ (see \cite[Proposition 1.6]{hartshorne80}).
Thus, we obtain an injective morphism of sheaves
$${\Omega^{[p]}_{\sX/C}}_{|\sX_t} \into \Omega_{\sX_t}^{[p]}$$
and hence
\begin{equation}
\label{eq:equality0}
h^0\Big(\sX_t,{\Omega^{[p]}_{\sX/C}}_{|\sX_t}\Big) \le h^0\big(\sX_t,\Omega_{\sX_t}^{[p]}\big).
\end{equation}
We claim that the inequality above is an equality, that is,
\begin{equation}
\label{eq:equality}
h^0\Big(\sX_t,{\Omega^{[p]}_{\sX/C}}_{|\sX_t}\Big) = h^0\big(\sX_t,\Omega_{\sX_t}^{[p]}\big).
\end{equation}
Indeed, let us start by observing that as the function $t \mapsto h^{0}\Big(\sX_t,{\Omega^{[p]}_{\sX/C}}_{|\sX_t}\big)$ is upper semicontinuous on $C$, we also have for a general point $t_1$ on $C$
$$h^0\Big(\sX_t,{\Omega^{[p]}_{\sX/C}}_{|\sX_t}\Big) \ge 
h^{0}\Big(\sX_{t_1},{\Omega^{[p]}_{\sX/C}}_{|\sX_{t_1}}\Big)=
h^{0}\Big(\sX_{t_1},\Omega^{[p]}_{\sX_{t_1}}\Big).$$ 
Now, recall from Lemma \ref{lemma:continuity} that
$h^p\big(\sX_t,\sO_{\sX_t}\big)$ is independent of $t \in C$.
This implies that $h^{0}\Big(\sX_{t},\Omega^{[p]}_{\sX_{t}}\Big)$
is independent of $t \in C$ since $h^{0}\Big(\sX_{t},\Omega^{[p]}_{\sX_{t}}\Big)=
h^p\big(\sX_{t},\sO_{\sX_{t}}\big)$ by assumption.
It follows that 
$$h^0\Big(\sX_t,{\Omega^{[p]}_{\sX/C}}_{|\sX_t}\Big) \ge h^0\big(\sX_t,\Omega_{\sX_t}^{[p]}\big).$$
Combining this with \eqref{eq:equality0}, we obtain \eqref{eq:equality} hence the claim. Therefore, the quantity $h^0\Big(\sX_t,{\Omega^{[p]}_{\sX/C}}_{|\sX_t}\Big)$ is independent of $t\in C$. By a theorem of Grauert, it follows that the natural map 
$$\big(f_* \Omega^{[p]}_{\sX/C}\big) \otimes \mathbb{C}(t) \to H^0\Big(\sX_t,{\Omega^{[p]}_{\sX/C}}_{|\sX_t}\Big)$$
is an isomorphism (see \cite[Corollary 12.9]{hartshorne77}).
Finally, observe that the natural map ${\Omega^{[p]}_{\sX/C}}_{|\sX_t} \into \Omega_{\sX_t}^{[p]}$
induces an isomorphism
$$H^0\Big(\sX_t,{\Omega^{[p]}_{\sX/C}}_{|\sX_t}\Big) \cong H^0\big(\sX_t,\Omega_{\sX_t}^{[p]}\big)$$
by \eqref{eq:equality}. This ends the proof of the proposition. 
\end{proof}
 
\section{Holonomy and stability of the tangent sheaf}\label{sec3}

In this section, we study the stability of the tangent sheaf of smoothable projective varieties with canonical singularities and trivial canonical divisor. The proof of Proposition \ref{prop:holonomy} relies on 
results proved in \cite{GGK}, which we recall first. 

\subsection{Singular Kähler-Einstein metrics, holonomy and stability}
Let $X$ be a normal projective analytic variety with canonical singularities such that $K_X\sim_{\mathbb Z} 0$, and let $H$ be an ample Cartier divisor. By \cite[Theorem 7.5]{EGZ}, there exists a unique closed positive $(1,1)$-current $\om$ with bounded potentials on $X$ such that $[\om] = [c_1(H)] \in H^2(X,\mathbb{R})$ and such that the restriction of $\omega$ to $X_{\reg}$ is a smooth Kähler form with zero Ricci curvature. Let $g$ be the Riemannian metric associated with $\om_{|X_{\reg}}$ on $X_{\reg}$. 
Given $x\in X_{\reg}$, we view  $(T_x X,g_x)$ as an euclidian vector space.
We denote the holonomy group (resp. restricted holonomy group) of $(X_{\reg},g)$ at $x$ by 
$\Hol(X_{\reg},g)_x$ (resp. $\Hol(X_{\reg},g)^\circ_x$). It comes with a linear representation on $T_xX$. Recall that
$\Hol(X_{\reg},g)_x$ is a subgroup of $\mathrm{SO}(T_xX,g_x)$, and that $\Hol(X_{\reg},g)^\circ_x$ is the connected component of the Lie group $\Hol(X_{\reg},g)_x$. Moreover, the complex structure $I$ on $X_{\reg}$ is parallel with respect to $g$, so that the hermitian metric $h_x$ on $T_x X_{\reg}$ induced by $g_x$ and $I_x$ enables to realize the holonomy group as subgroup of $\mathrm{U}(T_xX,h_x)$. We refer to \cite[Section~2.2]{GGK} for more detailed explanations. 

\medskip

Irreducibility of the holonomy representation and stability of the tangent sheaf are related by the following.

\begin{prop}[{\cite[Proposition 1.5]{GGK}}]
\label{prop:holrep}
In the above setting,
the tangent sheaf $T_X$ is stable \textup{(}resp. strongly stable\textup{)} with respect to any polarization if and only if the holonomy representation $\Hol(X_{\reg},g)_x\circlearrowleft T_xX$ \textup{(}resp. the restricted holonomy representation $\Hol(X_{\reg},g)_x^{\circ}\circlearrowleft T_xX$\textup{)} is irreducible.   
\end{prop}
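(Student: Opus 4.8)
The plan is to reduce both equivalences to a single dictionary between $\Hol(X_{\reg},g)_x$-invariant subspaces of $(T_xX,h_x)$ and proper saturated subsheaves of $T_X$ of slope zero, using the Ricci-flat metric $g$ as a Hermitian--Einstein structure on the locally free sheaf $T_{X_{\reg}}$. Indeed, $\Ric(g)=0$ says precisely that the Chern connection of $(T_{X_{\reg}},h)$ is Hermitian--Yang--Mills with vanishing Einstein factor, i.e. $\Lambda_\om F_h=0$, and since $c_1(T_X)=-c_1(K_X)\equiv 0$ the sheaf $T_X$ has slope $0$ for every polarization. A recurring point is that the saturated subsheaves produced by the holonomy are numerically flat, so their slope vanishes for \emph{every} polarization at once; this is what makes stability polarization-independent here, so that ``any'' may be read as ``some'' or ``all''. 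I will also use that $T_X$ is semistable with respect to every polarization, which holds for klt varieties with $K_X\equiv 0$.

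First I would show that a proper $\Hol(X_{\reg},g)_x$-invariant subspace $V\subsetneq T_xX$ obstructs stability. As the complex structure is $g$-parallel we have $\Hol(X_{\reg},g)_x\subset\mathrm{U}(T_xX,h_x)$, so both $V$ and $V^\perp$ are complex invariant subspaces; parallel transport then spreads $V$ to a $g$-parallel holomorphic subbundle $\mathcal V\subset T_{X_{\reg}}$ whose orthogonal complement is again parallel. Its reflexive extension $\mathcal F\subset T_X$ is a proper saturated subsheaf carrying the induced Ricci-flat Hermitian--Einstein structure, with $\Lambda_\om F_{\mathcal V}=0$; this forces $c_1(\mathcal F)\equiv 0$, hence $\mu_H(\mathcal F)=0=\mu_H(T_X)$ for every polarization $H$. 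Thus $T_X$ is stable for no polarization, which proves the implication ``stable $\Rightarrow$ irreducible''.

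Conversely, assuming $\Hol(X_{\reg},g)_x$ acts irreducibly I would show $T_X$ is stable. If not, there is a proper saturated subsheaf $\mathcal F\subsetneq T_X$ with $\mu(\mathcal F)=0$ (by semistability). The key step is to promote $\mathcal F$ to a $g$-parallel subbundle: by the Bando--Siu theory the pair $(T_X,h)$ is an admissible Hermitian--Einstein sheaf, hence polystable, and a saturated subsheaf of equal slope has vanishing second fundamental form on the open set where it is a subbundle, by a Chern--Weil-with-equality computation exploiting $\Lambda_\om F_h=0$. The orthogonal projection onto $\mathcal F$ is then $g$-parallel, and its image at $x$ is a proper $\Hol(X_{\reg},g)_x$-invariant subspace, a contradiction. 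This establishes the first equivalence. The main obstacle in these two steps is analytic rather than formal: one must know that the Chern--Weil degree computed with the \emph{incomplete} metric $g$ on $X_{\reg}$ agrees with the algebraic degree of the reflexive extension on $X$, and that the integration by parts produces no boundary contribution along $X\smallsetminus X_{\reg}$. This is exactly where the Bando--Siu extension theory and the theory of Chern classes of reflexive sheaves on klt spaces (together with the numerical flatness of the parallel factors) are needed.

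Finally, for the parenthetical statement I would apply the equivalence just proved to every quasi-\'etale cover $\gamma\colon Y\to X$. Writing $g_Y:=\gamma^*g$, one has $\gamma^{[*]}T_X\cong T_Y$ and $g_Y$ is again Ricci-flat, so $\gamma^{[*]}T_X$ is stable iff $\Hol(Y_{\reg},g_Y)_x$ acts irreducibly. The restricted holonomy is insensitive to the cover, $\Hol(Y_{\reg},g_Y)^\circ_x=\Hol(X_{\reg},g)^\circ_x$, whereas $\Hol(X_{\reg},g)^\circ_x\subseteq\Hol(Y_{\reg},g_Y)_x\subseteq\Hol(X_{\reg},g)_x$, the quotient $\Hol(Y_{\reg},g_Y)_x/\Hol(X_{\reg},g)^\circ_x$ being the image of $\pi_1(Y_{\reg})$ in the component group. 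If $\Hol(X_{\reg},g)^\circ_x$ is irreducible then so is every intermediate group, whence $T_X$ is strongly stable; if it is reducible, the component group permutes the finitely many invariant subspaces, and passing to the quasi-\'etale cover realizing the stabilizer of one of them---using that finite-index subgroups of $\pi_1^{\textup{\'et}}(X_{\reg})$ correspond to quasi-\'etale covers of the klt variety $X$---produces a cover whose holonomy is reducible, so that $\gamma^{[*]}T_X$ is unstable and $T_X$ is not strongly stable. This gives the second equivalence.
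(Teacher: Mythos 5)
The paper itself gives no proof of this proposition: it is imported verbatim from \cite[Proposition 1.5]{GGK}, so the only meaningful comparison is with that cited argument. Your sketch reconstructs it faithfully and correctly in outline --- the dictionary between $\Hol(X_{\reg},g)_x$-invariant subspaces and slope-zero saturated subsheaves of $T_X$ (with the genuine analytic content, as you rightly flag, being that Chern--Weil degrees computed on $X_{\reg}$ agree with algebraic degrees and that the cutoff/integration-by-parts argument produces no boundary terms near $X \smallsetminus X_{\reg}$), and the passage to quasi-\'etale covers via the component group acting on the finitely many Borel--Lichnerowicz factors of the restricted holonomy representation (your phrase ``finitely many invariant subspaces'' should be read this way, since invariant subspaces of the flat factor are not finite in number) for the strongly stable statement --- which is exactly how the result is proved in \cite{GGK}.
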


\subsection{Varieties with strongly stable tangent sheaf} The next result relates varieties strongly stable tangent sheaf to irreducible Calabi-Yau and irreducible symplectic varieties.

\begin{thm}[{\cite[Proposition 1.4]{GGK}}]\label{thm:ggk}
In the setting of Proposition \ref{prop:holrep}, suppose furthermore that $X$ has dimension $n \ge 2$.
If $T_X$ is strongly stable, then one of the following two cases holds. In either case, the action of the restricted
holonomy group on $T_xX$ is isomorphic to the standard representation.
\begin{enumerate}
\item The group $\Hol(X_{\reg},g)_x^{\circ}$ is isomorphic to $\SU(n)$, and $X$ is Calabi-Yau.
\item The dimension of $X$ is even, the group $\Hol(X_{\reg},g)_x^{\circ}$ is isomorphic to $\Sp(\frac{n}{2})$, and there exists a quasi-\'etale cover $Y \to X$ such that $Y$ is irreducible symplectic.
\end{enumerate}
\end{thm}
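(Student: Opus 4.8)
The plan is to read off the holonomy group of the Ricci-flat metric $g$ via Berger's classification, and then translate holonomy data into sheaf-theoretic data. By Proposition~\ref{prop:holrep}, strong stability of $T_X$ is equivalent to irreducibility of the restricted holonomy representation $\Hol(X_{\reg},g)^{\circ}_x \circlearrowleft T_xX$. Since $g$ is a Ricci-flat Kähler metric on $X_{\reg}$, its restricted holonomy is contained in $\SU(n)$: the Ricci form is the curvature of the determinant representation of the holonomy, so Ricci-flatness forces the $\mathrm{U}(1)$-factor of the restricted holonomy to be trivial. Thus $G:=\Hol(X_{\reg},g)^{\circ}_x$ is a connected subgroup of $\SU(n)\subset \SO(T_xX,g_x)$ acting irreducibly on $T_xX\cong\mathbb{C}^n$.

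First I would rule out the locally symmetric case. If $(X_{\reg},g)$ were locally symmetric, then irreducibility of the holonomy would force it to be modelled on an irreducible symmetric space; but an irreducible symmetric space is Einstein with nonzero Einstein constant unless it is flat, and a flat metric has trivial, hence reducible, holonomy. This contradicts irreducibility, so $(X_{\reg},g)$ is not locally symmetric. Berger's holonomy classification then applies: an irreducible, non-locally-symmetric Kähler holonomy contained in $\SU(n)$ is, together with its action on $T_xX$, either $\SU(n)$ in its standard representation, or (when $n$ is even) $\Sp(\tfrac{n}{2})$ in its standard representation. This yields the dichotomy as well as the assertion about the standard representation.

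Next I would convert holonomy invariants into reflexive forms via the holonomy principle in the singular setting, a Bochner-type argument identifying global reflexive holomorphic $q$-forms on a Ricci-flat klt space with $g$-parallel forms, hence with $G$-invariant elements of $\Lambda^q(T_xX)^{*}$. In the case $G=\SU(n)$, the standard representation has no invariants in $\Lambda^q(\mathbb{C}^n)^{*}$ for $0<q<n$; since invariants under the full holonomy group form a subspace of the invariants under $G$, we obtain $h^0(X,\Omega_X^{[q]})=0$ for $0<q<n$, and the identical argument on any quasi-\'etale cover $Y\to X$ (which carries the pulled-back Ricci-flat metric and has the same restricted holonomy, the latter being a local invariant unchanged under \'etale covers of the regular locus) gives $h^0(Y,\Omega_Y^{[q]})=0$. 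Hence $X$ is Calabi-Yau. In the case $G=\Sp(\tfrac{n}{2})$, the invariant complex symplectic form on $\mathbb{C}^n$ produces a parallel, everywhere non-degenerate holomorphic $2$-form on $X_{\reg}$, extending to a reflexive form $\sigma\in H^0(X,\Omega_X^{[2]})$.

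The remaining point in the symplectic case is to make the full holonomy connected on a cover: passing to the quasi-\'etale ``holonomy cover'' $Y\to X$ associated with the finite component group $\Hol(X_{\reg},g)_x/G$, the full holonomy of $Y$ becomes exactly $\Sp(\tfrac{n}{2})$, whose invariants in the exterior algebra of the cotangent space are generated by the symplectic form; since every further quasi-\'etale cover of $Y$ has the same holonomy $\Sp(\tfrac{n}{2})$, the algebra of global reflexive forms on each such cover is generated by (the reflexive pullback of) $\sigma$, so $Y$ is irreducible symplectic. The main obstacle throughout is that $(X_{\reg},g)$ is \emph{incomplete}, so that neither Berger's theorem, nor the de Rham splitting, nor the holonomy principle may be invoked off the shelf: making them work requires the $L^2$ and bounded-geometry control of reflexive forms and the finiteness of the holonomy component group established in \cite{GKP,GGK}. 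Establishing the correspondence between reflexive holomorphic forms and parallel forms, and the finiteness of $\Hol(X_{\reg},g)_x/G$ needed to form the holonomy cover, is where the real difficulty lies; the Berger dichotomy itself, being essentially Lie-algebraic through Ambrose--Singer and the Bianchi identity, survives the incompleteness once these analytic inputs are in place.
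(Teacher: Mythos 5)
The paper contains no proof of this statement at all: it is quoted verbatim from \cite[Proposition 1.4]{GGK}, so the ``proof'' in the paper is the citation itself, and your task here was in effect to reconstruct the cited argument. Your outline does so faithfully and correctly --- strong stability $\Leftrightarrow$ irreducibility of the restricted holonomy representation via Proposition~\ref{prop:holrep}, the inclusion $\Hol(X_{\reg},g)_x^{\circ}\subset \SU(n)$ from Ricci-flatness, exclusion of the locally symmetric case, Berger's classification (which, as you note, is local/Lie-algebraic and so tolerates incompleteness), the Bochner principle to turn holonomy invariants into reflexive forms on every quasi-\'etale cover, and the holonomy cover to connect the full holonomy in the symplectic case --- and you correctly identify the genuinely hard analytic content (the Bochner principle for reflexive forms and the finiteness of $\Hol(X_{\reg},g)_x/\Hol(X_{\reg},g)_x^{\circ}$ on the incomplete space $X_{\reg}$) as the deep results of \cite{GKP} and \cite{GGK} that must be cited rather than re-proved, which is exactly how the original reference structures its proof.
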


One of the crucial tools in the proof of Theorem \ref{thm:ggk} above is the so-called \textit{Bochner principle}.
  
\begin{thm}[Bochner principle, {\cite[Theorem A]{GGK}}]\label{holprin} In the setting of Proposition \ref{prop:holrep}, let $p$ and $q$ be non-negative integers, and write $\sE:=\big(\sT_X^{\otimes p}\otimes ({\Omega_X^1})^{\otimes q}\big)^{**}$. Then the restriction to $x$ induces a one-to-one correspondence between global sections of $\sE$ and 
$\Hol(X_{\reg},g)_x$-fixed points in $\sE_x$. 
\end{thm}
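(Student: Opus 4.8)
The plan is to reduce the statement to the classical principle that holomorphic tensors on a Ricci-flat Kähler manifold are parallel, and then to run the Bochner argument on the incomplete, non-compact manifold $(\Xr,g)$. First I record the formal part of the correspondence. Since $\sE$ is reflexive and $X$ is normal, restriction gives an identification $H^0(X,\sE)=H^0\big(\Xr,\sE_{|\Xr}\big)$, so a global section of $\sE$ is the same thing as a holomorphic section $\alpha$ of the honest locally free tensor bundle $E:=\big(T_X^{\otimes p}\otimes(\Omega_X^1)^{\otimes q}\big)_{|\Xr}$. Because $g$ is Kähler, its Levi-Civita connection $\nabla$ is complex and the $(0,1)$-part of the induced connection on $E$ is $\bar\partial$; hence a $\nabla$-parallel section of $E$ is automatically holomorphic, and a parallel section is determined by its value at $x$, which is fixed by $\Hol(\Xr,g)_x$ since parallel transport around loops preserves it. Conversely, a $\Hol(\Xr,g)_x$-fixed vector $v\in E_x$ spreads out by parallel transport to a well-defined (holonomy-invariance makes it path-independent) parallel, hence holomorphic, section of $E$, which is a global section of $\sE$ by the identity above. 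Thus restriction to $x$ identifies parallel sections with holonomy-fixed vectors bijectively, and \textbf{the whole theorem reduces to the single implication: every $\alpha\in H^0(\Xr,E)$ is parallel.}

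Next I would set up the Bochner formula. Since $(\Xr,g)$ is Kähler with $\Ric\equiv 0$, for a holomorphic section $\alpha$ of $E$ the Weitzenböck identity reduces, once the curvature term is contracted into the Ricci tensor and therefore vanishes, to the pointwise relation
$$\Delta|\alpha|_g^2=|\nabla\alpha|_g^2\ge 0,$$
so that $|\alpha|_g^2$ is subharmonic on $\Xr$. If $\Xr$ were compact this would immediately force $\nabla\alpha\equiv 0$; the difficulty is that $\Xr$ is non-compact and $g$ is incomplete, degenerating near $\Sing X$. I would therefore fix cutoff functions $\rho_\varepsilon$ equal to $1$ outside an $\varepsilon$-neighborhood of $\Sing X$ and compactly supported in $\Xr$, multiply the identity by $\rho_\varepsilon^2$, integrate against $dV_g$, and integrate by parts to obtain
$$\int_{\Xr}\rho_\varepsilon^2\,|\nabla\alpha|_g^2\,dV_g=-2\int_{\Xr}\rho_\varepsilon\,\langle\nabla\rho_\varepsilon,\nabla|\alpha|_g^2\rangle\,dV_g.$$
Cauchy--Schwarz bounds the right-hand side by $C\big(\int\rho_\varepsilon^2|\nabla\alpha|_g^2\big)^{1/2}\big(\int|\nabla\rho_\varepsilon|_g^2|\alpha|_g^2\big)^{1/2}$, and after absorbing the first factor this yields $\int\rho_\varepsilon^2|\nabla\alpha|_g^2\le C\int|\nabla\rho_\varepsilon|_g^2|\alpha|_g^2$. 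Letting $\varepsilon\to 0$ gives $\nabla\alpha\equiv 0$ as soon as the right-hand side tends to $0$.

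The argument therefore rests on two analytic inputs near the singular locus, and \textbf{this is where the main obstacle lies.} I need (i) a pointwise bound $|\alpha|_g\le C$ on $\Xr$ (together with $\int_{\Xr}\om^n<\infty$, this also places $\alpha$ in $L^2$), and (ii) cutoffs with $\int_{\Xr}|\nabla\rho_\varepsilon|_g^2\,dV_g\to 0$, i.e. that $\Sing X$ has vanishing capacity for $g$. For (ii) one exploits that $X$ is normal, so $\Sing X$ has complex codimension $\ge 2$, together with the volume control coming from the EGZ metric having bounded local potentials, so that $\om^n$ is comparable to a smooth reference volume; logarithmic cutoffs adapted to a resolution then have arbitrarily small gradient energy. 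Input (i) is the genuinely hard point: the norm of a holomorphic tensor measured in the \emph{degenerate} metric $g$ is not obviously bounded.

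For the top-degree case $p=0$, $q=\dim X$ the bound (i) is immediate, since the Monge--Ampère equation $\om^n=c\,i^{n^2}\alpha\wedge\bar\alpha$ forces $|\alpha|_g$ to be constant. In general I would deduce the boundedness of the subharmonic function $|\alpha|_g^2$ from a mean-value (Moser iteration) inequality valid on $(\Xr,g)$, using the uniform Sobolev and Poincaré constants and the non-collapsing of the singular Kähler--Einstein space furnished by the Gromov--Hausdorff description of Theorem~\ref{thm:gh} and the Donaldson--Sun theory underlying it. Once (i) and (ii) are in place, the cutoff estimate above forces $\nabla\alpha\equiv 0$, every global section of $\sE$ is parallel, and the formal correspondence of the first paragraph then yields the asserted bijection with the $\Hol(\Xr,g)_x$-fixed points in $\sE_x$.
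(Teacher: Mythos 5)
Note first that the paper does not prove this statement at all: Theorem~\ref{holprin} is imported verbatim from \cite[Theorem A]{GGK}, so there is no internal proof to compare against; what follows is an assessment of your sketch against the actual argument in \cite{GGK}. Your formal reductions are correct: reflexivity of $\sE$ identifies $H^0(X,\sE)$ with holomorphic sections of the tensor bundle on $\Xr$, the Levi-Civita connection of the K\"ahler metric $g$ induces the Chern connection on that bundle, and parallel transport sets up the bijection between parallel sections and $\Hol(\Xr,g)_x$-fixed vectors, so the theorem is indeed equivalent to the implication ``holomorphic $\Rightarrow$ parallel''. The pointwise identity $\Delta|\alpha|_g^2=|\nabla\alpha|_g^2$ for Ricci-flat $g$ and the conditional cutoff computation are also fine. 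The problem is that the two analytic inputs you isolate, (i) and (ii), are not side conditions but the entire mathematical content of the theorem, and neither of your proposed justifications holds up.

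For (i), your Moser-iteration argument is circular: to start the iteration for the subharmonic function $|\alpha|_g^2$ one needs an initial global integrability statement (say $|\alpha|_g^2\in L^2(\om^n)$ across $X\smallsetminus\Xr$), and this is precisely the missing estimate --- for tensors with $\sT_X$-factors the norm is computed with the \emph{inverse} of the degenerating metric and no cohomological bound is available, in contrast with the case of forms. Moreover, the Sobolev inequality and non-collapsing you invoke come from Theorem~\ref{thm:gh} and the Donaldson--Sun theory \cite{DS}, which require the smoothing $f\colon \sX\to\Delta$; but the statement is made in the setting of Proposition~\ref{prop:holrep}, where $X$ is an arbitrary projective variety with canonical singularities and $K_X\sim_{\mathbb Z}0$, with no smoothability hypothesis. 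The paper actually uses the theorem in that generality: in the last example of Section~\ref{sec6} it is applied to $S\times S/\langle i\rangle$, for which the existence of a projective smoothing is explicitly stated to be unknown. For (ii), bounded local potentials of $\om$ do \emph{not} make $\om^n$ comparable to a smooth volume form --- on a log resolution the density of the canonical measure is of the form $\prod|z_i|^{2a_i}$ with $a_i>-1$, which may vanish or blow up along the exceptional divisor --- nor do they yield a pointwise lower bound $\om\geq \delta\,\om_0$ with which to estimate $|\nabla\rho_\varepsilon|^2_g$; a correct capacity statement exists in the literature but requires a genuine pluripotential (Bedford--Taylor type) argument, not the comparison you assert. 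By way of contrast, the proof in \cite{GGK} never establishes a pointwise bound on $|\alpha|_g$ as an input (it is a consequence of the theorem, since parallel sections have constant norm): instead it works on a log resolution with a family of approximating K\"ahler metrics solving degenerate Monge--Amp\`ere equations, proves uniform $C^0$ and Chern--Lu-type estimates giving polynomially degenerating two-sided metric bounds along the exceptional divisor, and runs the integrated Bochner argument against those metrics, absorbing the possible polynomial blow-up of the tensor norm into the weights. Your sketch, as it stands, has a genuine gap exactly where the theorem is hard.
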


\subsection{Smoothings and holonomy}In the analytic setting \ref{sec:anset}, assume that for $t\neq 0$, $\X_t$ is simply-connected and either irreducible Calabi-Yau or irreducible symplectic. The following result then says that $T_X$ is stable. If moreover $\pi_1^{\textup{\'et}}\big(X_{\textup{reg}}\big)=\{1\}$, we conclude that $T_X$ is strongly stable with respect to any polarization. 

\begin{prop}\label{prop:holonomy}
In the analytic setting \ref{sec:anset}, assume that for $t\neq 0$, $\X_t$ is simply-connected and either irreducible Calabi-Yau or irreducible symplectic. We maintain notation of Proposition \ref{prop:holrep}, and set 
$G:=\Hol(X_{\reg},g)_x$, $G^{\circ}:=\Hol(X_{\reg},g)_x^{\circ}$, and $V:=T_xX$.
Then the following holds.
\begin{enumerate}
\item The representation $G\circlearrowleft V$ is irreducible. Equivalently, $T_X$ is stable with respect to any polarization.\label{i}
\item $G\subset \SU(V)$ if $\X_t$ is irreducible Calabi-Yau and $G\subset\Sp(V)$ if $\X_t$ is irreducible symplectic. \label{ii}
\item If $G^\circ$ is trivial, then there exists a quasi-\'etale cover $A \to X$ where $A$ is an abelian variety.\label{iii}
\item If $G^\circ$ is nontrivial, then $G^\circ$ is isomorphic to $\SU(r)\times\cdots\times\SU(r)$ or
$\Sp(r)\times\cdots\times\Sp(r)$, for some positive integer $r$.
In either case, the action of $G^\circ$ on $V$ is isomorphic to the corresponding product of standard representations.
\label{iv}
\end{enumerate}
\end{prop}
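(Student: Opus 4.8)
The plan is to transport the holonomy of the nearby fibres to the central one. For $t\neq 0$ the fibre $\X_t$ is smooth, simply connected and irreducible Calabi--Yau or irreducible symplectic, so its holonomy is the full group $\SU(n)$, respectively $\Sp(\tfrac{n}{2})$, acting irreducibly; in particular $T_{\X_t}$ is stable. I read off $G=\Hol(\Xr,g)_x$ and $G^\circ$ from this, using the Berger--de Rham decomposition of the Ricci--flat Kähler metric $g_0$, the Bochner principle (Theorem~\ref{holprin}), which identifies global reflexive tensors on $X$ with $G$--invariant tensors on $V$, and the fact that reflexive \emph{forms} match exactly across the degeneration. For \emph{part (2)}, in the Calabi--Yau case $g_0$ is Ricci--flat Kähler, so $G\subseteq \mathrm{U}(V)$, and the nowhere vanishing reflexive $n$--form trivialising $K_X$ has constant $g_0$--norm by Ricci--flatness, hence is parallel; this forces $G\subseteq \SU(V)$. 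In the symplectic case I apply Remark~\ref{cstr} to the second member $J_t$ of the hyperkähler triple of $g_t$ on $\X_t$: a subsequential limit $J_0$ of $\Phi_t^*J_t$ is a $g_0$--parallel complex structure on $\Xr$ anticommuting with $I_0$, so $(\Xr,g_0)$ is hyperkähler and $G\subseteq\Sp(V)$.

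Since $g_0$ is Ricci--flat Kähler, the restricted holonomy yields a splitting $V=V_0\oplus\bigoplus_i V_i$ with $V_0$ flat and $G^\circ=\prod_i H_i$, each $H_i\cong\SU(m_i)$ or $\Sp(m_i)$ acting by its standard representation on $V_i$; in particular $G^\circ$ is semisimple on $V_0^{\perp}$. By Lemma~\ref{lemma:continuity} the numbers $h^p(\X_t,\mathcal O_{\X_t})$ are independent of $t$; combined with Hodge symmetry (Remark~\ref{rem:hodge}) and the extension statement of Proposition~\ref{prop:extension_differential_forms}, the restriction map $H^0\big(X,\Omega_X^{[p]}\big)\to H^0\big(\X_t,\Omega_{\X_t}^{p}\big)$ is an isomorphism. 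Through Bochner this computes $\dim\big(\Lambda^p V^*\big)^{G}=h^0\big(\X_t,\Omega_{\X_t}^{p}\big)$, which vanishes for $0<p<n$ in the Calabi--Yau case and is spanned by $\sigma_t^{\,p/2}$ in the symplectic case. This determines the type ($\SU$ versus $\Sp$) of the factors $H_i$ and excludes any factor of the wrong type.

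If $G^\circ$ is trivial then $V=V_0$ and $g_0$ is flat; a compact flat Kähler manifold being a torus, $X$ then admits a quasi--étale cover by an abelian variety, giving \emph{part (3)} (alternatively via Proposition~\ref{proposition:kawamata}). \emph{Part (4)} follows once (1) is known: the flat part $V_0=V^{G^\circ}$ is $G$--invariant, hence $0$ when $G^\circ\neq\{1\}$, and irreducibility of $G$ forces it to permute the mutually isomorphic $V_i$ transitively, so $G^\circ\cong\SU(r)^{\times k}$ or $\Sp(r)^{\times k}$ with the associated product of standard representations. It remains to prove \emph{part (1)}, the irreducibility of $G\circlearrowleft V$, equivalently (Proposition~\ref{prop:holrep}) the stability of $T_X$. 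The singular Ricci--flat metric induces a Hermite--Einstein metric on $T_{\Xr}$, so $T_X$ is polystable of slope zero and decomposes as $\bigoplus_a \mathcal E_a$ with $\mathcal E_a$ stable, the summands corresponding to the $G$--irreducible constituents of $V$; I must show there is a single summand. If not, a proper summand $\mathcal E_a$ of rank $r_a$ has $\det\mathcal E_a$ numerically trivial, and after a quasi--étale cover trivialising this torsion determinant the inclusion $\Lambda^{r_a}\mathcal E_a\hookrightarrow\Lambda^{r_a}T_X\cong\Omega_X^{[n-r_a]}$ yields a nonzero reflexive form of intermediate degree $0<n-r_a<n$ (in the symplectic case one argues with the induced partial symplectic forms instead). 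Extending the cover across the degeneration and applying Proposition~\ref{prop:extension_differential_forms} on it would produce such a form on the smooth fibre, contradicting $h^0\big(\X_t,\Omega_{\X_t}^{p}\big)=0$ for $0<p<n$, respectively the one--dimensionality of each even piece.

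The main obstacle is precisely this last transfer: carrying the destabilising data---the summand $\mathcal E_a$, the cover trivialising its torsion determinant, and the resulting intermediate form---from the central fibre to the general fibre. Upper semicontinuity of $h^0$ runs in the wrong direction, so what is genuinely needed is the \emph{exact} matching of reflexive forms supplied by Proposition~\ref{prop:extension_differential_forms}, available for $\Omega^{[p]}$ only because of Hodge symmetry and the invariance of $h^p(\mathcal O)$, together with control of how quasi--étale covers propagate in the family. This is where the smoothability hypothesis does its real work, and it is the technical heart of the argument.
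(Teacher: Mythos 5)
Your parts (2), (3) and (4) follow essentially the paper's route: Remark~\ref{cstr} for the hyperk\"ahler limit in the symplectic case, flatness plus \cite[Corollary 1.16]{GKP} when $G^\circ$ is trivial (your appeal to ``a compact flat K\"ahler manifold is a torus'' is off, since $X_{\reg}$ is non-compact, but the cited corollary repairs this), and normality of $G^\circ$ in $G$ plus irreducibility and the holonomy classification for (4). The problem is part (1), which is the heart of the proposition. Your reduction to producing a reflexive form of intermediate degree is the right idea, but the execution contains a fatal detour: you pass to a quasi-\'etale cover of $X$ to trivialize the torsion determinant $\det\sE_a$, and then you need to ``extend the cover across the degeneration'' to contradict the vanishing of intermediate forms on the nearby smooth fibres. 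In the setting of Proposition~\ref{prop:holonomy}, $X$ is only assumed to have canonical singularities --- there is \emph{no} hypothesis on the codimension of the singular locus --- and extending a quasi-\'etale cover of the central fibre to the total space is exactly what fails in this generality. The only tool in the paper for such an extension, Proposition~\ref{proposition:extension_deformation_q_etale_covers}, requires $X$ to be smooth in codimension two together with depth bounds (this is precisely why Theorem~A carries that hypothesis), and Example~\ref{ex:kummer} (the Kummer surface $A/\langle\pm 1\rangle$ smoothed by quartic $K3$'s, Section~\ref{sec:nonsstable}) shows a quasi-\'etale cover of the central fibre that genuinely does not propagate to the family. So the step you defer as ``the technical heart'' is not merely unproven; as you have set it up, it cannot be carried out.

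The gap is avoidable, and closing it is what the paper's proof does: no cover is needed at all. If $G\circlearrowleft V$ is reducible, write $V=\bigoplus_{i} W_i$ and $G=\prod_i G_i$ with $G_i$ acting irreducibly on $W_i$ and trivially on $W_j$ for $j\neq i$ (\cite[Theorem 10.38]{Besse}). Since elements of $G_i$ act trivially on the complementary summands, their determinant on $V$ equals their determinant on $W_i$; hence part (2), $G\subset\SU(V)$, already forces $G_i\subset\SU(W_i)$ for \emph{each} $i$. Consequently the generator of $\Lambda^{\dim W_i}W_i^*$ is fixed by all of $G$, and the Bochner principle (Theorem~\ref{holprin}) produces a nonzero global reflexive form of degree $\dim W_i$ on $X$ itself --- equivalently, in your language, $\det\sE_a$ is \emph{trivial}, not merely torsion, so no trivializing cover is required. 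Proposition~\ref{prop:extension_differential_forms} together with Remark~\ref{rem:hodge} then extends this form to a section of $\Omega^{[p]}_{\sX/\Delta}$ which is nonzero on fibres near the central one, contradicting $h^0\big(\sX_t,\Omega^p_{\sX_t}\big)=0$ for $0<p<n$ in the Calabi--Yau case; in the symplectic case one instead counts invariant $2$-forms, getting $h^0\big(X,\Omega^{[2]}_X\big)=\# I$ by Schur's lemma and concluding $\#I=1$ from $h^0\big(\sX_t,\Omega^2_{\sX_t}\big)=1$ by the same extension result. With this correction your argument collapses onto the paper's proof; without it, part (1) --- and hence part (4), which you derive from it --- remains unestablished.
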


\begin{proof}
Let us start with (\ref{ii}). In both cases, $K_X$ is trivial so there exists a non-zero holomorphic $n$-form on $X_{\reg}$. By the Bochner principle (see Theorem \ref{holprin}), this implies that $G\subset \SU(V)$. In the symplectic case, Remark~\ref{cstr} enables to find two compatible complex structures $J_0,K_0$ 
on $(\Xr,g_0)$ satisfying $I_0J_0K_0=-\mathrm{Id}$ and which are smooth limits of such complex structures 
on $(\X_t,g_t)$. In particular, setting $\sigma_0(X,Y):=g_0(J_0X,Y)+ig_0(K_0X,Y)$ defines a \textit{non-degenerate} holomorphic $2$-form $\sigma_0$ on $X_{\reg}$ (see \cite[Proposition 14.15]{Besse}). Therefore, we must have $G\subset \Sp(V)$ by the Bochner principle again.

Moving on to (\ref{i}), assume that the representation $G \circlearrowleft V$ is reducible. Then one can decompose $V=\bigoplus_{i\in I} W_i$ and write $G=\prod_{i\in I} G_i$ where the action of $G_i$ on $W_j$ is irreducible if $j=i$ and trivial otherwise (see \cite[Theorem 10.38]{Besse}). 
Consider the Calabi-Yau case first. We have $G_i \subset \SU(W_i)$ by (\ref{ii}), which by the Bochner principle, provides a non-zero reflexive holomorphic form on $\Xr$ of degree $\dim W_i$. By Proposition~\ref{prop:extension_differential_forms} and Remark~\ref{rem:hodge}, it follows that $W_i$ is either
zero or the whole $V$, which concludes. In the symplectic case, (\ref{ii}) and an elementary computation shows that $W_i$ is even dimensional and that  $G_i \subset \Sp(W_i)$. In particular, one gets $h^0\big(X,\Omega_{X}^{[2]}\big)=\# I$, and Proposition~\ref{prop:extension_differential_forms} enables to conclude once again. Finally, the equivalence with stability is a consequence of Proposition~\ref{prop:holrep} above.

Finally, let us prove (\ref{iii}) and (\ref{iv}). 
Set $V_0:=\{v\in V, \forall g\in G^{\circ}, \, g(v)=v\}$. 
As $G^{\circ}$ is normal in $G$, $V_0$ is fixed by $G$. By (\ref{i}), this implies that $V_0$ is either zero or the whole $V$. Assume $V_0=V$ to start with. Then $G^{\circ}$ is trivial, that is, $(X_{\reg},g)$ is flat. This implies that $\sT_{X_{\reg}}$ is holomorphically flat since the $(1,0)$-part of the Chern connection of $g$ is holomorphic. By \cite[Corollary 1.16]{GKP}, we get the expected quasi-étale cover of $X$ from an abelian variety. So one can now assume that $V_0=\{0\}$. Therefore $G^{\circ} \circlearrowleft V$ decomposes as sum of non-trivial irreducible representations $V=\bigoplus_{j\in J} V_j$ along with $G^{\circ}=\prod_{j\in J} G^{\circ}_j$. By normality of $G^{\circ}$ in $G$, each element $g\in G$ permutes the $V_j$. As the representation $G\circlearrowleft V$ is irreducible, this implies that the representations $G_j^{\circ}\circlearrowleft V_j$ are pairwise isomorphic. Set $r=\dim V_j$.  By the classification of restricted holonomy \cite[Proposition 5.4]{GGK}, we see that $G_j^{\circ}\circlearrowleft V_j$ is  isomorphic to the standard representation of $\SU(r)$ or $\Sp(\frac{r}{2})$, completing the proof of the proposition. 
\end{proof}

\section{Towards a decomposition theorem}
\label{sec4}
\subsection{Reduction to smoothings by simply-connected and irreducible manifolds}
The main result of this section is Proposition \ref{prop:decomposition} below. It reduces the proof of Theorem~B to the case of a smoothing by simply-connected, irreducible Calabi-Yau or symplectic manifolds. 
\begin{prop}\label{prop:decomposition}
Let $X$ be a normal projective variety with klt singularities. Suppose furthermore that $K_X \equiv 0$, and that $X$ admits a projective $\mathbb{Q}$-Gorenstein smoothing. Then there exists an abelian variety $A$
as well as a projective variety $Y$ with canonical
singularities, a quasi-\'etale cover $$A \times Y \to X,$$ and a decomposition
$$Y \cong \prod_{i\in I} Y_i $$
such that the following holds.
\begin{enumerate}
\item The $Y_i$ admit projective $\mathbb{Q}$-Gorenstein smoothings over algebraic curves by irreducible and simply-connected Calabi-Yau, or
symplectic manifolds. 
\item The sheaves $T_{Y_i}$ are slope-stable with respect to any ample polarization on $Y$, with trivial determinants. 
\end{enumerate}
\end{prop}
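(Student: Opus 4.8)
The plan is to build the cover in three stages: reduce to a smoothing over an algebraic curve, peel off the abelian factor, and then split the remaining family into its irreducible pieces by taking flat limits of the Beauville--Bogomolov factors of the general fibre. First I would invoke Proposition~\ref{proposition:smoothing_disk_versus_alg_curve} to replace $X$ by a quasi-\'etale cover carrying a projective smoothing $f\colon \sX \to C$ over an algebraic curve with $K_{\sX/C}\sim_\mathbb{Z} 0$; since quasi-\'etale covers compose and the conclusion only asserts the existence of \emph{some} such cover, this reduction is harmless. The fibres $\sX_t$ then have canonical singularities (they are klt with $K_{\sX_t}\sim_\mathbb{Z} 0$ by adjunction), and the general fibre is smooth with $c_1=0$.

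Next I would split off the relative Albanese via Proposition~\ref{proposition:Abelian_factor}. To apply it I first arrange a section of $f$ after a finite base change $C_1\to C$ and a quasi-\'etale cover, so that the proposition produces a relative isogeny $\sA \to \big(\Pic^\circ(\sX/C)\big)^\vee$ and an isomorphism $\sX\times_{(\Pic^\circ(\sX/C))^\vee}\sA \cong \sZ \times_C \sA$, where $\sZ = a_{\sX/C}^{-1}(0)$. Restricting to the central fibre yields a quasi-\'etale cover $A\times Z \to X$ with $A:=\sA_0$ an abelian variety and $Z:=\sZ_0$; by Lemma~\ref{lemma:continuity} the invariant $h^1(\sZ_t,\mathcal O)$ is constant in the family, so $Z$ and its general smoothing fibre carry no holomorphic $1$-forms.

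With the abelian part removed, the smooth fibres $\sZ_t$ ($t\neq0$) are projective with $c_1=0$ and $h^1=0$, hence have finite fundamental group by the Beauville--Bogomolov theorem \cite{beauville83}. I would then pass to a quasi-\'etale cover $\sY\to\sZ$ which is fibrewise the universal cover of the general fibre (a relative universal cover, possibly after a further finite base change of $C$, extended across the central point by normalization), so that $\sY_t\cong\prod_{i}M_{i,t}$ is a product of irreducible, simply-connected Calabi--Yau and symplectic manifolds. Setting $Y:=\sY_0$, the composite $A\times Y\to X$ is the desired quasi-\'etale cover, and it remains to produce the decomposition $Y\cong\prod_i Y_i$. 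Here I would take flat limits of the factors: each $M_{i,t}$ sweeps out a family over the punctured curve which, by properness of the relevant Hilbert or Chow scheme, extends to a flat family $\sM_i\to C$ with central fibre $Y_i$, a normal projective variety with canonical singularities and $K_{Y_i}\sim_\mathbb{Z} 0$ (using Lemma~\ref{lemma:num_trivial_versus_torsion} and inversion of adjunction). The metric input of Theorem~\ref{thm:rz} and Remark~\ref{cstr}, namely smooth convergence of the Ricci-flat metrics and of the compatible complex structures on $Y_{\reg}$, lets me match these limits: the Beauville--Bogomolov splitting $T_{\sY_t}=\bigoplus_i E_{i,t}$ into parallel subbundles converges to a $g_0$-parallel splitting of $T_{Y_{\reg}}$, which by the Bochner principle (Theorem~\ref{holprin}) extends to a reflexive decomposition $T_Y=\bigoplus_i \mathcal E_i$ into integrable foliations whose algebraic leaves recover the factors $Y_i$, giving $Y\cong\prod_i Y_i$.

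Once the decomposition is in place, property~(2) is immediate: for each $i$ the family $\sM_i\to C$ is a smoothing of $Y_i$ by an irreducible, simply-connected Calabi--Yau or symplectic manifold, so Proposition~\ref{prop:holonomy}(\ref{i}) applies to $Y_i$ and shows that $T_{Y_i}$ is stable with respect to any polarization; its determinant is $\mathcal O_{Y_i}(-K_{Y_i})$, which is trivial since $K_{Y_i}\sim_\mathbb{Z} 0$. The step I expect to be the genuine obstacle is constructing the product decomposition of the \emph{singular} central fibre out of the factors of the general fibre: the metric convergence only controls data on $Y_{\reg}$, so one must show that the flat limits $Y_i$ are well behaved (normal, canonical), that the product of the limits is the limit of the products, and that the limiting parallel foliations on $Y_{\reg}$ are algebraically integrable and globalize, across the singular locus, to an honest product of projective varieties. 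This is precisely the ``flat limits of irreducible pieces'' analysis flagged in the introduction, and is where the bulk of the work lies.
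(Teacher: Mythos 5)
Your preliminary reductions do track the paper's: the first step (Proposition~\ref{proposition:smoothing_disk_versus_alg_curve}, giving a smoothing over an algebraic curve with $K_{\sX/C}\sim_{\mathbb{Z}}0$) is identical, and your use of Proposition~\ref{proposition:Abelian_factor} and of an extension-of-covers statement is the same toolkit, just in a different order (the paper first applies Beauville--Bogomolov to the geometric generic fiber, spreads the resulting \'etale cover out over $C^\circ$, extends it across $t_0$ via Lemma~\ref{lemma:limit_etale_cover} --- this is the semistable-reduction input your phrase ``further finite base change, extended by normalization'' is gesturing at --- and only then splits off the abelian scheme with Proposition~\ref{proposition:Abelian_factor} applied to the extended family). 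But there is a genuine gap, and it sits exactly where you flag it: you never prove that $Y\cong\prod_i Y_i$. Your plan --- smooth convergence of the Ricci-flat metrics (Theorem~\ref{thm:rz}, Remark~\ref{cstr}) plus the Bochner principle (Theorem~\ref{holprin}) to get a reflexive splitting $T_Y=\bigoplus_i\cE_i$, whose ``algebraic leaves recover the factors'' --- is not an argument. Passing from a direct-sum decomposition of the tangent sheaf of a \emph{singular} klt variety into foliations to a global product decomposition of the variety is precisely the hard integrability problem underlying the singular decomposition theorem (it is why \cite{bobo} is restricted to dimension at most five, and it is the substance of Conjecture~\ref{conj}); moreover all your metric data lives on $Y_{\reg}$ only, with no mechanism offered for globalizing across the singular locus. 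A proof cannot delegate its central step to ``where the bulk of the work lies.''

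The paper closes this gap with a short, purely algebraic argument that your proposal is missing, and it is the key idea of the proof. Because $h^1\big((\sY_{i}^\circ)_t,\sO_{(\sY_{i}^\circ)_t}\big)=0$ for $t\in C^\circ$, any line bundle on $\sY^\circ=\sY_{1}^\circ\times_{C^\circ}\cdots\times_{C^\circ}\sY_{s}^\circ$ splits as a tensor product of pullbacks from the factors; hence a relatively very ample $\sL$ on $\sY$ restricts over $C^\circ$ to $(p_1^\circ)^*(\sL_{|\sY_1^\circ})\otimes\cdots\otimes(p_s^\circ)^*(\sL_{|\sY_s^\circ})$, so the embedding $\sY\subset\bP^N\times C$ defined by $\sL$ restricts over $C^\circ$ to the Segre embedding of the fiber product of the $\sY_i^\circ\subset\bP^{N_i}\times C^\circ$. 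Letting $\sY_i\subset\bP^{N_i}\times C$ be the closure of $\sY_i^\circ$ (flat over $C$), the separatedness of the Hilbert scheme forces the flat limit of the product to be the product of the flat limits, i.e.\ $(\sY_{1})_{t_0}\times\cdots\times(\sY_{s})_{t_0}=\sY_{t_0}$ inside $\bP^N$, which immediately gives the decomposition of the central fiber together with normality, klt singularities and triviality of each $K_{(\sY_i)_{t_0}}$. This replaces your entire metric/foliation program; the analytic results of Section~\ref{sec1} are needed in the paper only later, for the stability statements via Proposition~\ref{prop:holonomy}, which is also how your item~(2) should be (and in your proposal correctly is) handled.
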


We first provide technical tools for the proof of our result.

\begin{lemma}\label{lemma:quasi_etale_cover_singularities_fibers}
Let $\sX$ be a normal variety, and let $f \colon \sX \to C$ be a flat projective morphism with connected fibers onto a smooth connected curve $C$. Suppose that $K_{\sX/C}$ is $\mathbb{Q}$-Cartier.
Suppose furthermore that $\sX_t$ has klt singularities for any point $t$ on $C$.
Let $\gamma\colon \sY \to \sX$ be a quasi-\'etale cover with $\sY$ normal.
Then $\sY_t$ is normal with klt singularities for any point $t$ on $C$.
\end{lemma}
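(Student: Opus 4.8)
The plan is to establish, fibre by fibre, that the restriction $\gamma_t\colon \sY_t \to \sX_t$ is again a quasi-\'etale cover, and then to invoke the stability of klt singularities under such covers. First I would record the global reductions. Since $C$ is smooth, $K_C$ is Cartier, so $K_\sX = K_{\sX/C} + f^*K_C$ is $\mathbb{Q}$-Cartier; as every fibre $\sX_t$ is klt (hence normal), Fact~\ref{remark:inversion_adjunction} shows that $\sX$ itself has klt, hence rational, hence Cohen--Macaulay singularities in a neighbourhood of each fibre, i.e.\ everywhere. Because $\gamma$ is \'etale in codimension one and $K_\sX$ is $\mathbb{Q}$-Cartier, $K_\sY = \gamma^{[*]}K_\sX$ is $\mathbb{Q}$-Cartier and $\sY$ is klt by \cite[Proposition 3.16]{kollar97}, in particular Cohen--Macaulay. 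Finally, $f$ is surjective (its image is open by flatness, closed by properness, and $C$ is connected), so $g := f\circ\gamma$ is a surjective projective morphism; since $\sY$ is an integral scheme dominating the smooth curve $C$, $g$ is flat. I will use repeatedly that $\sY_t = \sY\times_\sX \sX_t$, so that $\gamma_t$ is the base change of $\gamma$ along $\sX_t \hookrightarrow \sX$.

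The crux of the argument --- and the one genuine obstacle --- is that for a \emph{special} value of $t$ a whole component of $\sX_{\textup{sing}}$ may be contained in $\sX_t$, so that the branch locus of $\gamma$ meets $\sX_t$ in codimension one; a priori $\gamma_t$ need then not be \'etale in codimension one, and normality of $\sY_t$ is not automatic. The observation that removes this difficulty is that $\sX_t = f^*t$ is a \emph{Cartier} divisor which is moreover \emph{normal}. Indeed, being normal, $\sX_t$ is regular ($R_1$) at each of its codimension-one points $\eta$; since $\sX_t$ is a Cartier divisor through $\eta$, the elementary fact that a variety is regular wherever a Cartier divisor through it is regular forces $\sX$ to be regular at $\eta$. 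Hence $\sX$ is regular at every codimension-one point of $\sX_t$, that is, $\sX_{\textup{sing}}\cap \sX_t$ has codimension at least two in $\sX_t$. As $\gamma$ is \'etale over $\sX_{\textup{reg}}$ by the Nagata--Zariski purity theorem, it follows that $\gamma_t$ is \'etale over every codimension-one point of $\sX_t$; equivalently, $\gamma_t$ is \'etale in codimension one.

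It remains to conclude. On the one hand, $\sY_t$ is $R_1$: every codimension-one point of $\sY_t$ lies over a codimension-one point $\eta$ of $\sX_t$, where $\sX_t$ is regular and $\gamma_t$ is \'etale, and an \'etale cover of a regular local ring is regular. On the other hand, $\sY_t$ is $S_2$: it is a Cartier divisor in the Cohen--Macaulay variety $\sY$, cut out locally by the pullback of a uniformizer of $C$ at $t$, which is a non-zero-divisor on the integral scheme $\sY$; cutting a Cohen--Macaulay ring by a non-zero-divisor again yields a Cohen--Macaulay ring. By Serre's criterion $\sY_t$ is normal. Now $\gamma_t\colon \sY_t \to \sX_t$ is a finite morphism between normal varieties that is \'etale in codimension one, i.e.\ a genuine quasi-\'etale cover, and $\sX_t$ is klt; applying \cite[Proposition 3.16]{kollar97} once more shows that $\sY_t$ is klt, as desired. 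I expect the Cartier-divisor regularity step of the second paragraph to be the decisive point, since it is precisely what upgrades the merely set-theoretic control on the branch locus into the codimension-two estimate on the fibre needed to run the quasi-\'etale descent.
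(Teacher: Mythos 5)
Your proof is correct and takes essentially the same route as the paper's: inversion of adjunction and \cite[Proposition 3.16]{kollar97} to make $\sY$ klt and hence Cohen--Macaulay, the Cartier-divisor regularity fact together with Nagata--Zariski purity to see that $\gamma_t$ is \'etale in codimension one, Serre's criterion to get normality of $\sY_t$, and \cite[Proposition 3.16]{kollar97} applied to $\gamma_t$ for the klt conclusion. The only difference is expository: you spell out details (flatness of $g=f\circ\gamma$, the $R_1$/$S_2$ and dimension bookkeeping) that the paper leaves implicit.
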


\begin{proof}
Note first that $K_{\sY/C}$ is $\mathbb{Q}$-Cartier since we have 
$K_{\sY/C}\sim_\mathbb{Z} \gamma^*K_{\sX/C}$. Moreover, by Fact \ref{remark:inversion_adjunction}, $\sX$ 
has klt singularities. Applying \cite[Proposition 3.16]{kollar97} to $\gamma$, we see that
$\sY$ has klt singularities as well. In particular, $\sY$ is Cohen-Macaulay by \cite[Theorem 1.3.6]{kmm}, and hence so is $\sY_t$ for all $t\in C$. By the Nagata-Zariski purity theorem, $\gamma$ branches only over the 
singular set of $\sX$. On the other hand, we know that the smooth locus of $\sX_t$ is contained in the smooth locus of $\sX$. It follows that $\sY_t$ is smooth in codimension one. Now, from Serre's criterion for normality, we see that $\sY_t$ is normal for any $t\in C$.
Note also that ${\gamma}_t\colon {\sY}_t \to \sX_t$ is a quasi-\'etale cover.
By \cite[Proposition 3.16]{kollar97} applied to ${\gamma}_t$, we conclude that ${\sY}_t$ has klt singularities.
\end{proof}

\begin{rem}
In the setup of Lemma \ref{lemma:quasi_etale_cover_singularities_fibers}, let $\beta \colon B \to C$ be the Stein factorization of $g$. Then $\beta$ is \'etale, and $\gamma$ factors through the \'etale cover 
$B\times_C \sX \to \sX$.
\end{rem}

\begin{lemma}\label{lemma:limit_etale_cover}
Let $f \colon \sX \to C$ be a flat projective morphism with connected fibers from a 
normal variety $\sX$ onto a smooth connected curve $C$.
Suppose that $K_{\sX/C}$ is $\mathbb{Q}$-Cartier.
Suppose furthermore that $\sX_t$ has klt singularities for any point $t$ on $C$.
Let $C^\circ \subset C$ be a dense open set, and let $g^\circ\colon \sY^\circ \to C^\circ$ be a smooth projective morphism with connected fibers. Finally, let $\gamma^\circ\colon \sY^\circ \to f^{-1}(C^\circ)$ be a quasi-\'etale cover such that 
$g^\circ=f_{|f^{-1}(C^\circ)}\circ \gamma^\circ$. Then there exists a 
normal variety $\sY_1$ as well as a
projective morphism $g_1\colon \sY_1 \to C_1$ with connected fibers onto a
smooth curve $C_1$, a finite morphism $\pi\colon C_1 \to C$,
and a quasi-\'etale cover $\gamma_1\colon \sY_1 \to \sX\times_C C_1$ such that
the following holds. Write $C_1^\circ:=\pi^{-1}(C^\circ)$.
\begin{enumerate}
\item We have ${\sY_1}_{|C_1^\circ}\cong \sY^\circ\times_{C^\circ}C_1^\circ$
and ${\gamma_1}_{|g_1^{-1}(C_1^\circ)}$ is induced by $\gamma^\circ$.
\item Any fiber of $g_1$ has klt singularities.
\end{enumerate}
\end{lemma}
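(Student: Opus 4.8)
The plan is to first extend $\gamma^\circ$ across the fibres lying over $C\sm C^\circ$ by normalization, and then to perform a ramified base change $\pi\colon C_1\to C$ tailored to absorb the ramification that this extension acquires along those fibres. Since $g^\circ$ is smooth with connected fibres, $\sY^\circ$ is smooth and connected, hence irreducible, so $K(\sY^\circ)$ is a finite extension of $K(\sX)=K\big(f^{-1}(C^\circ)\big)$; I would let $\gamma'\colon\sY'\to\sX$ be the normalization of $\sX$ in $K(\sY^\circ)$. Then $\sY'$ is normal, $\gamma'$ is finite and restricts to $\gamma^\circ$ over $C^\circ$, and because $\gamma^\circ$ is quasi-\'etale the branch divisor of $\gamma'$ is contained in the finite union of fibres $\sX_{t_0}$, $t_0\in C\sm C^\circ$. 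The decisive structural point is that each fibre $\sX_t$ is klt, hence \emph{reduced}: at a general point $p$ of a codimension-one component $D$ of such a fibre, $\sX$ is smooth and $f$ is a submersion, so in suitable local coordinates $f=x_1$ and $D=\{x_1=0\}$, and $\gamma'$, being \'etale over $\{x_1\neq 0\}$, is locally a disjoint union of Kummer covers $x_1=y^{e}$.

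Next I would choose the base change. Let $d$ be a common multiple of all the ramification indices $e$ occurring along the finitely many fibre components over $C\sm C^\circ$, and let $\pi\colon C_1\to C$ be a finite morphism from a smooth curve whose ramification index over each point of $C\sm C^\circ$ is divisible by $d$. Set $\sX_1:=\sX\times_C C_1$ and let $\sY_1$ be the normalization of $\sY'\times_C C_1$, with induced maps $\gamma_1\colon\sY_1\to\sX_1$ and $g_1\colon\sY_1\to C_1$. By Fact~\ref{remark:normality}, $\sX_1$ is normal, since $\sX_1\to C_1$ is flat with normal fibres $\sX_{\pi(t_1)}$; these fibres, being isomorphic to the klt fibres of $f$, are klt. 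Over $C_1^\circ$ the family $\sY^\circ\times_{C^\circ}C_1^\circ$ is smooth, hence normal, so ${\sY_1}_{|C_1^\circ}\cong\sY^\circ\times_{C^\circ}C_1^\circ$ and $\gamma_1$ restricts there to the base change of $\gamma^\circ$; this gives assertion~(1).

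The heart of the matter is to check that $\gamma_1$ is quasi-\'etale. In the local model above $\sX_1=\{x_1=w^{d}\}$ is smooth, while $\sY'\times_C C_1=\{y^{e}=w^{d}\}$; writing $d=ec$ this factors as $\prod_{\zeta^{e}=1}(y-\zeta w^{c})$, so its normalization splits into $e$ smooth sheets, each mapping isomorphically onto $\sX_1$ via $x_1=y^{e}=w^{d}$. Thus $\gamma_1$ is \'etale at the general point of every fibre component over $C_1\sm C_1^\circ$, while over $C_1^\circ$ it is the quasi-\'etale base change of $\gamma^\circ$. A codimension-one irreducible subset of $\sX_1$ either dominates $C_1$, and so meets $C_1^\circ$, or is a full fibre component; in neither case does it lie in the branch locus, so $\gamma_1$ is quasi-\'etale. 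Lemma~\ref{lemma:quasi_etale_cover_singularities_fibers}, applied to $\sX_1\to C_1$ and $\gamma_1$, then shows that every fibre of $g_1$ is normal with klt singularities, which is assertion~(2). Connectedness of the fibres of $g_1$ I would obtain by Stein factorization, passing if necessary to the union of the components of $\sY_1$ dominating $C_1$ and using that the general fibres coincide with the connected fibres of $g^\circ$.

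The step I expect to be the main obstacle is precisely this verification that the base change trivializes the ramification. It hinges on the fibres being \emph{reduced}: reducedness is what forces the local model $x_1=y^{e}$ rather than $x_1^{m}=y^{e}$, and hence makes a base change of order divisible by $e$ split the normalized fibre product into unramified sheets. Keeping track of connectedness of the fibres of $g_1$ while respecting this Kummer base change is a secondary nuisance, handled by incorporating the Stein factorization of $f\circ\gamma'$ before extracting roots.
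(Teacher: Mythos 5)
Your proof is correct, and it takes a genuinely different route from the paper's. The paper extends $\gamma^\circ$ to a generically finite morphism $\gamma\colon \sY \to \sX$ and then invokes the semistable reduction theorem of Kempf--Knudsen--Mumford--Saint-Donat: after a finite base change $\pi\colon C_1 \to C$ there is a birational projective morphism $\wh\sY_1 \to \sY\times_C C_1$ from a \emph{smooth} variety whose fibration over $C_1$ is semistable, and $\sY_1$ is defined as the Stein factorization of $\wh\sY_1 \to \sX\times_C C_1$; quasi-\'etaleness of $\gamma_1$ then follows because the fibers of $\wh\sY_1 \to C_1$ are reduced, so $\gamma_1$ has ramification index one along every fiber component. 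Your argument replaces this black box by the classical Abhyankar/Kummer covering trick: normality (hence reducedness) of the klt fibers, together with generic smoothness and purity, pins down the local structure of the normalized extension $\gamma'$ along the special fibers as $x_1=y^e$, and a base change whose ramification indices are divisible by all the $e$'s makes the normalized fiber product split into sheets \'etale over $\sX_1$. Both proofs hinge on exactly the same geometric input -- reducedness of the klt fibers of $f$, which you correctly isolate as the decisive point -- but yours is more elementary and self-contained (no semistable reduction), and it exhibits concretely why the base change absorbs the ramification, whereas the paper's is shorter to write and delegates the work to a cited theorem.

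A few routine points you should still nail down. First, the existence of a finite cover $\pi\colon C_1 \to C$ of smooth curves with ramification index divisible by $d$ over the finitely many points of $C \sm C^\circ$ is standard (Kawamata's covering construction for curves) but deserves a citation or a one-line construction. Second, before applying Lemma~\ref{lemma:quasi_etale_cover_singularities_fibers} to $\sX_1 \to C_1$ and $\gamma_1$ you must check that $K_{\sX_1/C_1}$ is $\mathbb{Q}$-Cartier; this holds because it is the pull-back of $K_{\sX/C}$ under the projection $\sX_1 = \sX\times_C C_1 \to \sX$, exactly as in the paper's proof. Third, your precaution about ``the union of the components of $\sY_1$ dominating $C_1$'' is vacuous: since $g^\circ$ is smooth with connected fibers, the generic fiber of $\sY'\to C$ is geometrically integral, so $\sY'\times_C C_1$ is integral (being flat over $C_1$) and $\sY_1$ is irreducible; the Stein factorization argument alone then yields connectedness of all fibers of $g_1$. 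Finally, a notational slip: in the local computation the exponent should be the actual ramification index $d'$ of $\pi$ at the given point (a multiple of $d$), not $d$ itself -- harmless, since only divisibility by $e$ is used.
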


\begin{proof}
Let $\sY$ be a reduced and irreducible variety, and let 
$g \colon \sY \to C$ be a projective morphism onto $C$ such that $\sY_{|g^{-1}(C^\circ)}\cong \sY^\circ$ as varieties over $C^\circ$. We may also assume that $\gamma^\circ$ extends to a generically finite morphism
$\gamma\colon \sY \to \sX$.
By a theorem of Kempf, Knudsen, Mumford, and Saint-Donat (\cite{kkmsd}), there exist a smooth curve $C_1$,  
a finite morphism $\pi\colon C_1 \to C$, and a birational projective morphism $\wh \sY_1 \to \sY\times_C C_1$ from a smooth variety $\wh\sY_1$ such that the induced fibration $\wh \sY_1 \to C_1$ is semi-stable.
We may also assume without loss of generality that $\wh \sY_1 \to \sY\times_C C_1$ induces an isomorphism 
over $C_1^\circ:=\pi^{-1}(C^\circ)$.
Write $\sX_1:=\sX \times_C C_1$, and
consider the Stein factorization $\wh\sY_1 \to \sY_1 \to \sX_1$ of $\wh \sY_1 \to \sX_1$. We have the following commutative diagram:

\centerline{
\xymatrix{
\wh \sY_1\ar@/_1pc/[ddr]_{\textup{semi-stable}}\ar[dr]^{\textup{birational}}\ar[rr] & & \sY\times_C C_1 \ar[rd]\ar[rr] & & \sY \ar[rd]^{\gamma}\ar@/^3pc/[ddr]^{g} & \\
& \sY_1\ar[d]_{g_1}\ar[rr]^{\gamma_1,\,\textup{finite}}& & \sX_1=\sX\times_C C_1\ar[d]^{f_1}\ar[rr] & & \sX \ar[d]^{f}\\
& C_1 \ar@{=}[rr]& & C_1 \ar[rr]_{\pi} & & C. \\
}
}
\noindent Note that ${\gamma_1}_{|g_1^{-1}(C_1^\circ)}$ is a quasi-\'etale cover. Since $\wh \sY_1 \to C_1$ has reduced fibers, it follows that $\gamma_1$ is \'etale in codimension one. 

We still have to show that fibers of $g_1$ are klt.
Note first that $\sX_1$ is normal by Fact \ref{remark:normality}. This implies that 
$K_{\sX_1}$ is well-defined and $\mathbb{Q}$-Cartier since $K_{\sX_1/C_1}$ is then the pull-back of
$K_{\sX/C}$ under the projection morphism $\sX_1=\sX \times_C C_1 \to \sX$.
The claim now follows from Lemma \ref{lemma:quasi_etale_cover_singularities_fibers}, completing 
the proof of the lemma.
\end{proof}

We are now in position to prove Proposition \ref{prop:decomposition}.

\begin{proof}[Proof of Proposition \ref{prop:decomposition}]
We maintain notation and assumptions of Proposition \ref{prop:decomposition}.
Note that the statement (2) in Proposition~\ref{prop:decomposition} is an immediate consequence of (1) together with Proposition~\ref{prop:holonomy}.

By Proposition \ref{proposition:smoothing_disk_versus_alg_curve}, replacing $X$ with a quasi-\'etale cover, if necessary, we may assume that there exist
a normal variety $\sX$ and
a flat projective morphism with connected fibers $f \colon \sX \to C$
onto a smooth connected algebraic curve $C$ such that 
$X \cong f^{-1}(t_0)$ for some point $t_0$ on $C$, $f^{-1}(t)$ is smooth for $t \neq t_0$, and such that
$K_{\sX/C}\sim_\mathbb{Z} 0$.
Set $C^\circ:=C \setminus \{t_0\}$ and $\sX^\circ:=\sX\setminus \sX_{t_0}$.

Let $K$ be an algebraic closure of the function field of $C$. Applying the Beauville-Bogomolov decomposition theorem to $\sX_K$ (see \cite{beauville83}), we see that, 
replacing $C$ by a finite cover of some open neighborhood of $t_0$, if necessary, there exists 
an abelian scheme $\sB^\circ/C^\circ$, as well as 
finitely many families $(\sY_{i}^\circ/C^\circ)_{1 \le i \le s}$ of projective manifolds, and a finite \'etale cover 
$$\gamma^\circ\colon \sB^\circ\times_{C^\circ}\sY_{1}^\circ\times_{C^\circ}\cdots 
\times_{C^\circ}\sY_{s}^\circ \to  \sX^\circ$$
such that the $(\sY_{i}^\circ)_K$ are irreducible and simply-connected Calabi-Yau, or
symplectic manifolds. In particular, we have 
$\pi_1^{\textup{\'et}}\big((\sY_{i}^\circ)_K\big)=\{1\}$.
Applying \cite[Expos\'e X, Th\'eor\`eme 3.8]{sga1}, we conclude that $\pi_1^{\textup{\'et}}\big((\sY_{i}^\circ)_t\big)=\{1\}$ for any point $t$ on $C^\circ$.
On the other hand, we have $h^1\Big((\sY_{i}^\circ)_t,\sO_{(\sY_{i}^\circ)_t}\Big)=0$ by Lemma 
\ref{lemma:continuity} and $K_{(\sY_{i}^\circ)_t}\sim_\mathbb{Z}0$ by the adjunction formula.
It follows from \cite{beauville83} that $(\sY_{i}^\circ)_t$ has finite 
fundamental group for $t \neq t_0$. Hence $(\sY_{i}^\circ)_t$ is simply-connected.
Using \cite{beauville83} and Lemma \ref{lemma:continuity} again, we obtain that for any 
$t\in C^\circ$, the $(\sY_{i}^\circ)_t$ are irreducible and simply-connected Calabi-Yau, or
symplectic manifolds.

Write $\sY^\circ:=\sY_{1}^\circ\times_{C^\circ}\cdots 
\times_{C^\circ}\sY_{s}^\circ$, and $\sZ^\circ:=\sB^\circ\times_{C^\circ}\sY^\circ$.
By Lemma \ref{lemma:limit_etale_cover}, replacing $C$ by a further cover, if necessary, we may also assume that there exists a normal variety $\sZ$ as well as a flat morphism $g \colon \sZ \to C$ whose fibers have klt singularities, and a quasi-\'etale cover $\gamma\colon \sZ \to \sX$ such that $g^{-1}(C^\circ)=\sZ^\circ$ and $\gamma^\circ=\gamma_{|g^{-1}(C^\circ)}$. We may also assume that $h^\circ\colon \sY^\circ \to C^\circ$ has a section. Together with
the neutral section $0^\circ \colon C^\circ \to \sB^\circ$, we obtain a section of $g_{|g^{-1}(C^\circ)}$, and hence a section of $g$. Moreover, the projection morphism $\sZ^\circ \to \sB^\circ$ identifies with the natural morphism $\sZ^\circ \to \big(\Pic^\circ(\sZ^\circ/C^\circ)\big)^\vee$ induced by the universal line bundle.
By Proposition \ref{proposition:Abelian_factor}, there exists an abelian scheme $\sA \to C$, 
as well as a normal variety $\sY \subset \sZ$, and a finite \'etale cover
$\sY \times_C \sA \to  \sZ$ such that the natural map $h\colon \sY \to C$ is
a flat projective morphism  with connected normal fibers, and such that $h^{-1}(C^\circ)=0^\circ\times_{C^\circ}\sY^\circ\cong\sY^\circ$. The situation is summarized in the following diagram: 
  $$
  \xymatrix{ 
    \sA \times_C \sY \ar[r]_{\text{étale}}  & \sZ  \ar[r]^{\gamma}_{\text{quasi-étale}} & \sX \ar[r] & C \\
     & \sZ^\circ \cong \sB^\circ \times_{C^\circ} \sY^\circ \ar@{}[u]|{\bigcup} \ar[r] &\sX^\circ \ar@{}[u]|{\bigcup}\ar[r] &   C^\circ.  \ar@{}[u]|{\bigcup}
  }
  $$
 
Let $\sL$ be a relatively ample line bundle on $\sY$. Denote by $s_i^\circ \colon C^\circ \to \sY_i^\circ$ the induced section of $\sY_i^\circ \to C^\circ$, and consider the embedding $\sY_i^\circ \subset \sY_{1}^\circ\times_{C^\circ}\cdots 
\times_{C^\circ}\sY_{s}^\circ\cong \sY^\circ$ induced by the sections $s_j$ for $j\neq i$.
Denote by 
$p_i^\circ \colon \sY^\circ \to \sY_i^\circ$ the projection morphism. Then, we have
$$\sL_{|\sY^\circ} \cong (p_1^\circ)^*(\sL_{|\sY_1^\circ})\otimes \cdots \otimes 
(p_s^\circ)^*(\sL_{|\sY_s^\circ})$$
since $h^1\Big((\sY_{i}^\circ)_t,\sO_{(\sY_{i}^\circ)_t}\Big)=0$ for any point $t$ on $C^\circ$.

Suppose now that $\sL$ is very ample over $C$, and set 
$N_i:=h^0\big((\sY_i^\circ)_t,\sO_{(\sY_i^\circ)_t}\big)-1$. Then $\sL_{|\sY_i^\circ}$ is very ample over $C^\circ$ and induces an embedding $\sY_i^\circ\subset \bP^{N_i}\times C^\circ$. It follows that
$\sY_{1}^\circ\times_{C^\circ}\cdots 
\times_{C^\circ}\sY_{s}^\circ$ embeds into $\bP^{N_1}\times\cdots\times\bP^{N_s}\times C^\circ$. Set $N:=h^0(\sY^\circ,\sO_{\sY^\circ})-1=(N_1+1)\cdots(N_s+1)-1$. Then $\sL$ embeds $\sY$ into $\bP^N\times C$ and the image of $\sY^\circ$ agree with the image of 
$\sY_{1}^\circ\times_{C^\circ}\cdots 
\times_{C^\circ}\sY_{s}^\circ$ under the Segre embedding 
$\bP^{N_1}\times\cdots\times\bP^{N_s}\times C^\circ \subset \bP^N\times C^\circ$.
Let $\sY_i \subset \bP^{N_i}\times C$ be the closure of $\sY_i^\circ$. Note that
$\sY_i \to C$ is flat. Since the scheme $\textup{Hilb}(\bP^N)$ is separated, we conclude that
$(\sY_{1})_{t_0}\times\cdots \times(\sY_{s})_{t_0} = {\sY}_{t_0}\subset \bP^N$. This easily implies that
$(\sY_{i})_{t_0}$ is a normal projective variety with klt singularities and trivial canonical divisor, completing the proof of Proposition \ref{prop:decomposition}. 
\end{proof}

\begin{proof}[Proof of Theorem B]
By Proposition \ref{proposition:smoothing_disk_versus_alg_curve}, we may assume without loss of generality that $X$ admits a 
projective $\mathbb{Q}$-Gorenstein 
smoothing. Theorem B is now an easy consequence of Propositions~\ref{prop:decomposition} and \ref{prop:holonomy}. Indeed, the only thing to check is the assertion concerning the algebra of reflexive forms. Proposition~\ref{prop:extension_differential_forms} settles the Calabi-Yau case immediately. In the symplectic case, item (2) in Proposition~\ref{prop:holonomy} together with the Bochner principle (see Theorem~\ref{holprin}) yield a reflexive $2$-form $\sigma$ on $X$, symplectic on $X_{\textup{reg}}$,   
while Proposition~\ref{prop:extension_differential_forms} shows that $\sigma$ generates the algebra of reflexive forms. 
\end{proof}

\subsection{Irreducible Calabi-Yau and symplectic varieties with stable tangent sheaf}
\label{ssec:factors}

In this section, we try to analyze a bit further the factors appearing in the decomposition of $X'$ in Theorem~B. These varieties have stable tangent sheaf, but that sheaf may not be strongly stable. Such varieties are conjecturally covered by either an abelian variety or a product of copies of a single irreducible, Calabi-Yau or symplectic variety, see Conjecture~\ref{conj}. Assuming that a weak singular analogue of the Beauville-Bogomolov decomposition theorem holds, we prove this conjecture.

\begin{prop}\label{prop:conj}
Suppose that any projective variety $X$ with klt singularities and numerically trivial canonical class admits a quasi-étale cover $Y\to X$ that splits as a product of an abelian variety and varieties with strongly stable tangent sheaves.
Then Conjecture~\ref{conj} holds. 
\end{prop}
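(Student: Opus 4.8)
The plan is to feed the hypothesised decomposition into the holonomy machinery of Section~\ref{sec3}. First I would apply the assumption to $X$ to obtain a quasi-\'etale cover $Y\to X$ with $Y\cong A\times\prod_{i\in I}W_i$, where $A$ is an abelian variety and each $T_{W_i}$ is strongly stable. As in the proof of Proposition~\ref{prop:decomposition}, each factor is klt with numerically trivial canonical class, so after a further quasi-\'etale cover we may assume $K_{W_i}\sim_{\mathbb Z}0$ and, absorbing the elliptic ($1$-dimensional) factors into $A$, that $\dim W_i\ge 2$. Theorem~\ref{thm:ggk} then shows that, up to one more quasi-\'etale cover of each factor, every $W_i$ is an irreducible Calabi-Yau or an irreducible symplectic variety, with restricted holonomy $\SU(\dim W_i)$ or $\Sp(\tfrac12\dim W_i)$ acting as the standard representation.

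Next I would bring in the stability hypothesis on $X$. Fix a polarisation with respect to which $T_X$ is stable and let $g$ be the associated Ricci-flat metric on $X_{\mathrm{reg}}$; by Proposition~\ref{prop:holrep} the holonomy representation $G\circlearrowleft V$, with $G:=\Hol(X_{\mathrm{reg}},g)_x$ and $V:=T_xX$, is irreducible. The key observation is that every factor other than $A$ has $h^1(W_i,\mathcal O_{W_i})=0$, so by the Künneth formula every K\"ahler class on $Y$ splits as a sum of classes pulled back from the factors; by uniqueness of Ricci-flat metrics the pulled-back metric $g_Y$ is therefore a \emph{product} metric. Consequently $\Hol(Y_{\mathrm{reg}},g_Y)^\circ=\prod_{i\in I}G_i^\circ$, with each $G_i^\circ$ equal to $\SU$ or $\Sp$, and $V=V_A\oplus\bigoplus_{i\in I}V_i$ where $V_A=T_aA$ carries the trivial action and each $V_i=T_{y_i}W_i$ is irreducible. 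Since $Y\to X$ is \'etale in codimension one, one has $\Hol(Y)^\circ=G^\circ$, so this exhibits $G^\circ$ as a normal subgroup of $G$ together with its isotypical decomposition of $V$.

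From here the dichotomy is produced exactly as in the final part of the proof of Proposition~\ref{prop:holonomy}. The subspace $V_0:=V^{G^\circ}=V_A$ is $G$-invariant because $G^\circ$ is normal in $G$, so irreducibility of $G\circlearrowleft V$ forces $V_0=V$ or $V_0=0$. If $V_0=V$ then $G^\circ$ is trivial, $(X_{\mathrm{reg}},g)$ is flat, and \cite[Corollary~1.16]{GKP} yields a quasi-\'etale cover of $X$ by an abelian variety, which is the first alternative of Conjecture~\ref{conj}. If instead $V_0=0$ then $A$ is a point, $Y\cong\prod_{i\in I}W_i$, and $G$ permutes the nontrivial irreducible summands $V_i$; irreducibility of $G\circlearrowleft V$ makes this permutation transitive, so all $V_i$ are isomorphic as $G_i^\circ$-representations. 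In particular all the $W_i$ share a single dimension $r\ge 2$ and a single type (all Calabi-Yau or all symplectic), matching the two cases of the conjecture.

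The remaining, and main, difficulty is to upgrade this to a product of \emph{copies} of a single variety, i.e. to produce genuine isomorphisms $W_i\cong W_j$. For this I would pass to the cover $Z\to X$ attached to the kernel of the permutation action of $\pi_1^{\textup{\'et}}(X_{\mathrm{reg}})$ on the finite set $\{V_i\}_{i\in I}$; by the Nagata-Zariski purity theorem and \cite[Proposition~3.16]{kollar97} this finite cover of $X_{\mathrm{reg}}$ extends to a quasi-\'etale cover of the projective variety $X$, and it is Galois with group $\Gamma$ acting transitively on $I$. Over $Z$ the holonomy now preserves each $V_i$, so the corresponding parallel subbundles integrate to a product structure, and the deck transformations in $\Gamma$ are biregular automorphisms of $Z$ inducing the transitive permutation of the factors, hence furnishing the sought isomorphisms. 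This realises $Z$ as a product of copies of a single irreducible Calabi-Yau (resp. symplectic) variety, and $Z\to X$ is the cover required by Conjecture~\ref{conj}. I expect the delicate point to lie precisely in this last step: checking that the Riemannian holonomy splitting descends to an \emph{algebraic} product decomposition of $Z$ that is compatible with the $\Gamma$-action, so that the transitive permutation of holonomy factors genuinely realises the $W_i$ as isomorphic projective varieties.
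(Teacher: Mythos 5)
Your first three paragraphs are essentially correct: the product structure of the pulled-back Ricci-flat metric and the resulting decomposition of the restricted holonomy are exactly what the paper records in Section~\ref{ssec:factors} (citing \cite[proof of Proposition~7.6]{GGK}), and the dichotomy $V_0=V$ or $V_0=0$ via normality of $G^\circ$ in $G$ and irreducibility of $G\circlearrowleft V$ is the same mechanism as in Proposition~\ref{prop:holonomy}. The problem is the final step, and it is a genuine gap, not merely a delicate point. Holonomy can only tell you that the summands $V_i$ are isomorphic as representations, i.e.\ that the $W_i$ have the same dimension and the same type; it cannot produce isomorphisms of \emph{varieties} $W_i\cong W_j$. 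Your proposed repair --- pass to the cover $Z\to X$ attached to the kernel of the $\pi_1^{\textup{\'et}}\big(X_{\textup{reg}}\big)$-action on $\{V_i\}$ and ``integrate the parallel subbundles to a product structure'' --- assumes precisely what is unavailable: passing from a parallel (holonomy) splitting of the tangent sheaf of a singular klt space to an algebraic product decomposition of a quasi-\'etale cover is the content of the singular Beauville--Bogomolov decomposition itself, which is what Conjecture~\ref{conj} and Theorem~B are partial steps toward. Your cover $Z$, unlike $Y$, comes with no algebraic product structure, and nothing in the paper lets you build one on it.

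The paper's proof sidesteps integration entirely by extracting the isomorphisms from Galois automorphisms acting on the \emph{given} algebraic splitting, with no metrics at all. Let $Z \to Y$ be a quasi-\'etale cover such that $Z\to X$ is Galois with group $G$. The hypothesis gives $T_Z\cong \sE_0\oplus\sE_1\oplus\cdots\oplus\sE_m$, where $\sE_i$ ($i\ge 1$) is the strongly stable foliation by fibers of the algebraic projection $\pi_i\colon Z\to B_i:=\prod_{j\neq i}Y_j$, whose general fiber component $Z_i$ is a quasi-\'etale cover of $Y_i$. If $\dim Y_0\ge 1$, then $h^0(Z,\sE_i)=0$ for $i\ge 1$ (strong stability, rank $\ge 2$, slope zero), so $\sE_0\cong\sO_Z^{\oplus\dim Y_0}$ is the saturation of the image of $H^0(Z,T_Z)\otimes\sO_Z\to T_Z$, hence $G$-invariant; it descends to a nonzero slope-zero subsheaf of $T_X$, and stability of $T_X$ forces $T_Z=\sE_0$, i.e.\ $Y=Y_0$ is abelian. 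If $\dim Y_0=0$, restriction to a leaf shows $\sE_i\not\cong\sE_j$ for $i\neq j$ (one restriction is trivial with many sections, the other has reflexive hull $T_{Z_i}$ with none), so $G$ permutes the summands, transitively again by stability of $T_X$; an element $g_i\in G$ with $g_i^*\sE_1\cong \sE_i$ is a biregular automorphism of $Z$ carrying general leaves of one foliation to general leaves of the other, whence $Z_i\cong Z_1$ as projective varieties. The cover $Z_1^{\times m}\to \prod_i Y_i=Y\to X$ is then quasi-\'etale, and after one further quasi-\'etale cover of $Z_1$ supplied by Theorem~\ref{thm:ggk} it is a product of copies of a single irreducible Calabi--Yau or irreducible symplectic variety. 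This is exactly the piece your holonomy argument cannot supply: the factor isomorphisms come from deck transformations acting on an algebraic splitting, not from the holonomy representation, so you should replace your last paragraph by the Galois-theoretic argument above.
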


\begin{proof}
We maintain notation and assumptions of Conjecture~\ref{conj}. We know that there exists a quasi-\'etale cover $Y \to X$  
of $X$ such that $Y$ decomposes as a product $Y \cong Y_0 \times Y_1\times\cdots\times Y_m$ of an abelian variety $Y_0$ and 
varieties $Y_i$ for $1 \le i \le m$ with strongly stable tangent sheaves. We may assume without loss of generality that 
$\dim Y_i \ge 2$ for $1 \le i \le m$.
Let $Z \to Y$ be a quasi-\'etale cover such that the induced quasi-\'etale cover $Z \to X$ is Galois, with Galois group $G$. 

The decomposition $Y \cong Y_0 \times Y_1\times\cdots\times Y_m$ induces a decomposition
$$T_Z\cong \sE_0\oplus \sE_1\oplus\cdots\oplus\sE_m$$ of $T_Z$, where the $\sE_i$ for $1\le i \le m$ are
strongly stable sheaves.
Observe that the foliation $\sE_i$ is induced by the Stein factorization of the projection $$\pi_i\colon Z \to B_i:=Y_0\times \cdots Y_{i-1}\times Y_{i+1}\times\cdots\times Y_m.$$  Note also that $$\sE_0\oplus \cdots \oplus\sE_{i-1}\oplus\sE_{i+1}\oplus\cdots\oplus\sE_m$$
induces a flat connection on $\pi_i$. A classical result of complex analysis then implies that $\pi_i$ is a locally trivial 
fibration for the Euclidean topology over the smooth locus of $B_i$. Denote by $Z_i$ a connected component of general fiber of $\pi_i$. It comes with a quasi-\'etale cover
$Z_i \to Y_i$.

Suppose first that $\dim Y_0 \ge 1$. Since $\sE_i$ is strongly stable for $1 \le i \le m$, we have
$h^0(Z,\sE_i)=0$ for each $1 \le i \le m$. This immediately implies that $\sE_0$ is stable under $G$, and thus $T_Z=\sE_0$ since $T_X$ is stable. This shows that $Y$ is an abelian variety. 

Suppose from now on that $\dim Y_0 = 0$. We claim that $\sE_i \not\cong \sE_j$ if $i \neq j$. Indeed, we have
${\sE_j}_{|Z_i}\cong \sO_{Z_i}^{\oplus \,\textup{rank}\sE_j}$ while 
$\big({\sE_i}_{|Z_i}\big)^{**}\cong T_{Z_i}$ and $h^0(Z_i,T_{Z_i})=0$ since $T_{Z_i}$ is strongly stable.
Therefore, the group $G$ acts on the set $1\le i \le m$ of stable summands of $T_Z$, and since $T_X$ is stable, this action is transitive. Thus, for any $i\in I$, there exists $g_i\in G$ such that $\sE_i\cong g_i^*\sE_1 \subset g_i^*T_Z\cong T_Z$. 
This in turn implies that $Z_i \cong Z_1$, completing the proof of the proposition.
\end{proof}

To finish this section, let us rephrase some results obtained in Proposition~\ref{prop:holonomy} with the notations of Theorem~B. We start by fixing on $X$ a singular Ricci-flat Kähler metric, provided by \cite{EGZ}. It can be showed (see \cite[Proof of Proposition~7.6]{GGK}) that the induced metric on $A\times  X'$ is actually a product metric, and that the metric induced on $ X'$ is also a product metric compatible with the decomposition of $ X'$. Up to passing to a further cover and inflating the abelian part, one can assume that the factors of $ X'$ have non-trivial restricted holonomy. Finally, one can pass to an holonomy cover by \cite[Theorem B]{GGK} to ensure that the holonomy is connected. Piecing everything together, Proposition~\ref{prop:holonomy} shows that a quasi-étale cover of $X$ splits as $$A \times \prod_{i\in I} Y_i \times \prod_{j\in J} Z_j$$ where $A$ is an abelian variety, and such that the following holds. Let $y_i \in Y_i$ and $z_j\in Z_j$ be smooth points.

\begin{enumerate}
\item For every $i\in I$, there exist $n_i,r_i\in \mathbb N$ with $n_ir_i = \dim Y_i$ such that the holonomy acts on $T_{y_i}Y_i$ by the standard product representation $\SU(n_i)^{\times r_i} \circlearrowleft \mathbb C^{\dim Y_i}$.  
\item For every $j\in J$, there exist $n_j,r_j\in \mathbb N$ with $2n_jr_j=\dim Z_j$ such that the holonomy acts on $T_{z_j}Z_j$ by the standard product representation $\Sp(n_j)^{\times r_j}\circlearrowleft \mathbb C^{\dim Z_j}$.
\end{enumerate}

\section{Proof of Theorem A} 
\label{sec5}

In this section we prove Theorem A. 

\medskip

Let $X$ be a (proper) variety, let $m$ be a positive integer,
and write $A:=\mathbb{C}[[x_1,\ldots,x_m]]$ and $K:=\mathbb{C}((x_1,\ldots,x_m))$. Let also $\wb K$ be an algebraic closure of $K$. Recall that \textit{a deformation of $X$ over $A$} is a flat morphism of schemes 
$f \colon \mathcal{X} \to \textup{Spec}\, A$ such that $\mathcal{X}\otimes(A/\mathfrak{m})\cong X$, where $\mathfrak{m}$ denotes the maximal ideal of $A$. We say that $f$ is a \textit{proper deformation of $X$ over $A$} if $f$ is a proper morphism. 

If $f$ is a smooth proper morphism with geometrically connected fibers, then there is an isomorphism of fundamental groups
$\pi_1^{\textup{\'et}}\big(\mathcal{X}_{\wb K}\big) \cong \pi_1^{\textup{\'et}}\big(X\big)$ (see \cite[Expos\'e X, Th\'eor\`eme 3.8]{sga1}). However, it is well-known that this statement becomes wrong if $f$ is not assumed to be proper.
The following will prove to be crucial.

\begin{thm}\label{thm:fundamental_group}
Let $X$ be a normal proper variety with klt singularities, and assume that $X$ is smooth in codimension two. Write $A:=\mathbb{C}[[x_1,\ldots,x_m]]$ and $K:=\mathbb{C}((x_1,\ldots,x_m))$, and let $\wb K$ be an algebraic closure of $K$.
Let $\mathcal{X}$ be a proper deformation of $X$ over $A$. 
Suppose that $\mathcal{X}_{\wb K}$ is smooth with $\pi_1^{\textup{\'et}}\big(\mathcal{X}_{\wb K}\big)=\{1\}$. Then $\pi_1^{\textup{\'et}}\big(X_{\textup{reg}}\big)=\{1\}$.
\end{thm}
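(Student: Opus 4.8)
The plan is to prove the equivalent statement that every connected finite \'etale cover $V \to X_{\textup{reg}}$ is trivial. Normalizing $X$ in the function field of $V$ produces a finite morphism $\gamma_0\colon Y\to X$ whose branch locus lies in $X\smallsetminus X_{\textup{reg}}$, of codimension $\geq 3$; thus $\gamma_0$ is quasi-\'etale, $Y$ is normal, connected, and klt by \cite[Proposition 3.16]{kollar97}, and it suffices to show $\deg\gamma_0=1$. Next I reduce to a one-parameter base. Since $\mathcal{X}_{\bar K}$ is smooth, the image $W\subset\Spec A$ of the non-smooth locus of $f$ is a proper closed subset, and over $\Spec A\smallsetminus W$ the map $f$ is smooth and proper, so in characteristic zero all geometric fibres have the same \'etale fundamental group as $\mathcal{X}_{\bar K}$, namely the trivial one. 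Choosing a general formal arc $\Spec\mathbb{C}[[t]]\to\Spec A$ through the closed point, whose generic point avoids $W$, and base-changing, I may assume $A=\mathbb{C}[[t]]$, with $X=\mathcal{X}_0$ a Cartier divisor, $\mathcal{X}_{\bar K}$ still smooth and simply connected, and the relative singular locus of $f$ equal to $X\smallsetminus X_{\textup{reg}}$. By Fact~\ref{remark:normality} the total space $\mathcal{X}$ is normal, and it is regular away from $X\smallsetminus X_{\textup{reg}}$, so its relative smooth locus $\mathcal{X}^{\textup{sm}}$ is a regular scheme meeting $X$ exactly in $X_{\textup{reg}}$.

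The heart of the argument is to spread $\gamma_0$ out to a quasi-\'etale cover of $\mathcal{X}$, which I would do infinitesimally and then algebraize. By topological invariance of the \'etale site, $V\to X_{\textup{reg}}$ lifts uniquely and without obstruction over every thickening $\mathcal{X}_n:=\mathcal{X}\otimes_A A/(t^{n+1})$, giving finite \'etale covers of the thickenings of $X_{\textup{reg}}$ inside $\mathcal{X}_n$. Because $X$ is Cohen--Macaulay and $X\smallsetminus X_{\textup{reg}}$ has codimension $\geq 3$, the reflexive push-forward across the relative singular locus extends these compatibly to finite covers $Y_n\to\mathcal{X}_n$ with $Y_0=Y$ (coherence follows from \cite[Exp.~VIII]{sga2}, and the codimension hypothesis keeps the extension flat and \'etale in codimension one). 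Grothendieck's existence theorem, applied to the proper morphism $\mathcal{X}\to\Spec\mathbb{C}[[t]]$ over the complete local ring $\mathbb{C}[[t]]$, algebraizes the resulting formal cover to a finite morphism $\gamma\colon\mathcal{Y}\to\mathcal{X}$ with $\mathcal{Y}_0\cong Y$; as $\gamma$ is \'etale over the regular scheme $\mathcal{X}^{\textup{sm}}$, Zariski--Nagata purity shows $\gamma$ is quasi-\'etale. I expect this extension to be the main obstacle: guaranteeing that the reflexive extension remains flat with the correct algebra structure and that $\mathcal{Y}_0\cong Y$ is exactly where the Cohen--Macaulay property and the codimension-three hypothesis enter in an essential way.

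Once $\gamma$ is available the conclusion follows by restriction to the generic fibre and descent. Since $\mathcal{X}_{\bar K}\subset\mathcal{X}^{\textup{sm}}$, the map $\mathcal{Y}_{\bar K}\to\mathcal{X}_{\bar K}$ is finite \'etale over a smooth simply connected variety, hence a trivial cover of some degree $d$. As $\mathcal{Y}$ is normal and finite over the irreducible $\mathcal{X}$, each component of $\mathcal{Y}$ dominates the base and meets the special fibre; connectedness of $Y=\mathcal{Y}_0$ then forces $\mathcal{Y}$ to be irreducible. The homotopy sequence $1=\pi_1^{\textup{\'et}}(\mathcal{X}_{\bar K})\to\pi_1^{\textup{\'et}}(\mathcal{X}_\eta)\to\mathrm{Gal}(\bar K/K)\to 1$ identifies $\mathcal{Y}_\eta$ with $\mathcal{X}_\eta\times_K L$ for a degree-$d$ field extension $L/K$, so $\gamma$ is the pull-back of the normalization $\Spec A_L\to\Spec\mathbb{C}[[t]]$. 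But $\gamma$ is \'etale at the generic point of the divisor $X$, where $\mathcal{X}$ is regular; hence $A_L$ is unramified over $\mathbb{C}[[t]]$ at the closed point, i.e.\ \'etale, and since $\mathbb{C}[[t]]$ is strictly henselian this gives $L=K$ and $d=1$. Therefore $\gamma_0$, and with it $V\to X_{\textup{reg}}$, is trivial, which proves $\pi_1^{\textup{\'et}}\big(X_{\textup{reg}}\big)=\{1\}$.
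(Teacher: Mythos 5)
Your proposal takes the same route as the paper's: normalize $X$ in the function field of the given cover to get a quasi-\'etale $\gamma_0\colon Y\to X$ with $Y$ klt, lift this cover over each infinitesimal thickening of $X_{\textup{reg}}$ by topological invariance of the \'etale site, extend across the singular locus by reflexive push-forward (flatness coming from $\textup{depth}\ge 3$, i.e.\ from the Cohen--Macaulay property together with smoothness in codimension two --- note that these are needed for $Y$, not for $X$, but $Y$ inherits both, being klt and quasi-\'etale over $X$), algebraize with Grothendieck's existence theorem, and conclude on the geometric generic fibre. This is precisely the content of Lemma~\ref{lemma:extension}, Proposition~\ref{proposition:extension_deformation_q_etale_covers} and Corollary~\ref{cor:above}. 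Your preliminary reduction to a base $\mathbb{C}[[t]]$ and your closing descent argument (identifying $\mathcal{Y}_\eta$ with $\mathcal{X}_\eta\times_K L$ and ruling out ramification of $A_L$ over $\mathbb{C}[[t]]$) are correct but avoidable: the paper works directly over $\mathbb{C}[[x_1,\ldots,x_m]]$ and ends with semicontinuity of $h^0$ together with simple connectedness of $\mathcal{X}_{\wb K}$.

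The genuine gap is the clause ``as $\gamma$ is \'etale over the regular scheme $\mathcal{X}^{\textup{sm}}$, Zariski--Nagata purity shows $\gamma$ is quasi-\'etale.'' This is backwards, and the assertion it rests on is unproved. What the formal construction and algebraization actually give is \'etaleness of $\Gamma\colon\mathcal{Y}\to\mathcal{X}$ at points lying over $X_{\textup{reg}}$: formal completion along the special fibre only controls an open neighbourhood of $\Gamma^{-1}(X_{\textup{reg}})$, and an open set containing $X_{\textup{reg}}$ need not contain the generic fibre. A priori $\Gamma$ could ramify along a horizontal divisor meeting the special fibre only inside $X\smallsetminus X_{\textup{reg}}$; purity on $\mathcal{X}^{\textup{sm}}$ cannot exclude this by itself, because $\mathcal{X}^{\textup{sm}}$ is not proper over the base, so a divisorial branch component of $\Gamma$ inside $\mathcal{X}^{\textup{sm}}$ is not forced to meet $X_{\textup{reg}}$. (Conversely, if \'etaleness over $\mathcal{X}^{\textup{sm}}$ were known, quasi-\'etaleness would be immediate without purity, since $\mathcal{X}\smallsetminus\mathcal{X}^{\textup{sm}}=X\smallsetminus X_{\textup{reg}}$ has codimension $\ge 4$ in $\mathcal{X}$.) Since both halves of your last paragraph --- the triviality of $\mathcal{Y}_{\wb K}\to\mathcal{X}_{\wb K}$ and the use of the homotopy exact sequence --- require the generic fibre of $\Gamma$ to be \'etale, this missing step is essential, not cosmetic. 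It is exactly the final paragraph of the paper's proof of Proposition~\ref{proposition:extension_deformation_q_etale_covers}: if $\Gamma$ were ramified at a codimension-one point $x$ of $\mathcal{X}$ not on the special fibre, the branch locus, being closed, would contain $\wb{\{x\}}\cap X$, which is non-empty by properness and of codimension $\le 1$ in $X$ by Krull's principal ideal theorem (this is where regularity of the base enters); on the other hand, by flatness of $\mathcal{Y}$ over the base and the fibre-wise criterion for \'etaleness, the branch locus meets $X$ exactly in the branch locus of $\gamma_0$, which has codimension $\ge 3$ --- a contradiction. Inserting this argument (and a justification that $\mathcal{Y}$ is normal, which you also assert without proof; it follows from flatness, properness and openness of the normal locus, since $\mathcal{Y}_0=Y$ is normal) completes your proof and makes it agree with the paper's.
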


The following example shows that Theorem \ref{thm:fundamental_group} is wrong if one relaxes the assumption on the codimension of the singular locus.

\begin{exmp}
Let $X \subset \mathbb{P}^3$ be a cone over a smooth plane cubic curve, and let $f\colon \sX \to \mathbb{P}^1$ be flat family of cubic surfaces in $\mathbb{P}^3$ such that $f^{-1}(0)=X$ and $f^{-1}(t)$ is smooth for a general point $t$ on $\mathbb{P}^1$. Then $\pi_1\big(\sX_{\wb{\mathbb{C}(t)}}\big)=\{1\}$ and 
$\pi_1\big(X_{\textup{reg}})\cong \mathbb{Z}\oplus\mathbb{Z}$.

\end{exmp}

The same arguments used in the proof of Lemmas \cite[I.9.1 and I.9.2]{artin_def_sing}
show that the following holds.

\begin{lemma}\label{lemma:extension}
Let $X$ be a normal variety of dimension at least two, let $X^\circ \subset X_{\textup{reg}}$ be an open subset, and let $A$ be a local artinian $\mathbb{C}$-algebra. Let $i\colon X^\circ \into X$ be the inclusion map.
\begin{enumerate}
\item Let $X_A$ be a deformation of $X$ over $A$, and suppose that $X \setminus X^\circ$ has codimension at least two. Then the restriction map induces an isomorphism $\sO_{X_A} \cong i_*\big(i^{-1}\sO_{X_A}\big)$. 
Let $X_A^1$ and $X_A^2$ be deformations of $X$ over $A$, and let 
$(X_A^1)^\circ \subset X_A^1$ and $(X_A^2)^\circ \subset X_A^2$ be the open subschemes with underlying topological space $X^\circ$.
Then any isomorphism $(X_A^1)^\circ\cong (X_A^2)^\circ$ of $A$-schemes uniquely extends to an isomorphism 
$X_A^1\cong X_A^2$ of $A$-schemes. 
\item Suppose that $X$ is affine, that $X \setminus X^\circ$ has codimension at least three, and that $\textup{depth}\,\sO_{X,x} \ge 3$ for any point $x \in X\setminus X_{\textup{reg}}$. Let $X^\circ_A$ be a deformation of $X^\circ$ over $A$, and set 
$X_A:=\textup{Spec}\, H^0(X^\circ_A,\sO_{X^\circ_A})$. Then $X_A$ is a flat deformation of $X$ over $A$ extending $X^\circ_A$.
\end{enumerate}
\end{lemma}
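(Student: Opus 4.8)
The plan is to reduce both parts to a chain of small extensions of the artinian base $A$ and to read off everything from local cohomology with supports along $Z:=X\setminus X^\circ$. I would fix a filtration of $A$ by ideals with successive quotients isomorphic to $\mathbb C=A/\frm$; by flatness of any deformation over $A$, the corresponding structure sheaf is then filtered with graded pieces isomorphic to (direct sums of) $\sO_X$, respectively $\sO_{X^\circ}$ over $X^\circ$, so that vanishing of local cohomology on the central fibre propagates to every stage of the filtration via the long exact sequences. The two hypotheses enter precisely as such vanishings: normality of $X$ (hence $S_2$) together with $\codim_X Z\ge 2$ gives $\mathcal H^0_Z(\sO_X)=\mathcal H^1_Z(\sO_X)=0$, while the sharper assumption $\textup{depth}\,\sO_{X,x}\ge 3$ with $\codim_X Z\ge 3$ gives in addition $H^2_Z(\sO_X)=0$.

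For part (1), the filtration immediately yields $\mathcal H^0_Z(\sO_{X_A})=\mathcal H^1_Z(\sO_{X_A})=0$, which is exactly the statement that restriction to the complement of $Z$ identifies $\sO_{X_A}$ with $i_*i^{-1}\sO_{X_A}$. Given then an isomorphism $\phi\colon (X_A^1)^\circ\xrightarrow{\sim}(X_A^2)^\circ$ of $A$-schemes, I would push it forward along $i$: since $\sO_{X_A^j}\cong i_*\sO_{(X_A^j)^\circ}$ for $j=1,2$, the pushed-forward comorphism, together with the extension to $X$ of the underlying automorphism of the reduced locus $X^\circ$ (again by normality and $\codim_X Z\ge 2$), assembles into an isomorphism $X_A^1\cong X_A^2$. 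It is the unique such extension because every section of $\sO_{X_A^j}$ is determined by its restriction to the dense open $X^\circ$.

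For part (2), I would induct on the length of $A$. Given a small extension $0\to\mathbb C\to A\to A''\to 0$ and the deformation $X^\circ_A$, flatness over $A$ produces the short exact sequence $0\to\sO_{X^\circ}\to\sO_{X^\circ_A}\to\sO_{X^\circ_{A''}}\to 0$ on $X^\circ$. Applying $H^0(X^\circ,-)$ and using that $X$ is affine, so that the local-cohomology sequence $H^1(X,\sO_X)\to H^1(X^\circ,\sO_{X^\circ})\to H^2_Z(\sO_X)$ forces $H^1(X^\circ,\sO_{X^\circ})=0$, produces a short exact sequence
$$0\to H^0(X,\sO_X)\to H^0\big(X^\circ_A,\sO_{X^\circ_A}\big)\to H^0\big(X^\circ_{A''},\sO_{X^\circ_{A''}}\big)\to 0.$$
This sequence is precisely the infinitesimal criterion of flatness, so $X_A=\Spec H^0(X^\circ_A,\sO_{X^\circ_A})$ is flat over $A$; reducing modulo $\frm$ identifies its central fibre with $\Spec H^0(X,\sO_X)=X$ (as $X$ is affine), and restricting to $X^\circ$ recovers $X^\circ_A$ by part (1).

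The step I expect to be the main obstacle is the flatness verification in part (2): one must propagate the vanishing $H^1(X^\circ,\sO_{X^\circ})=0$ — which rests on combining $H^1(X,\sO_X)=0$ (affineness) with $H^2_Z(\sO_X)=0$ (the depth-at-least-$3$ hypothesis) — through every small extension, then apply the local criterion of flatness over the artinian base while simultaneously checking that the global-sections construction is compatible with restriction to $X^\circ$, so that $X_A$ genuinely extends $X^\circ_A$. Part~(1), by contrast, is the classical normality/$S_2$ extension property carried along the artinian filtration, and should be routine.
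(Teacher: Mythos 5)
Your overall strategy --- filtering the artinian base by small extensions, propagating the local-cohomology vanishing through the filtration via flatness, and deducing part (2) by induction from the vanishing of $H^1(X^\circ,\sO_{X^\circ})$ forced by affineness together with $H^2_Z(\sO_X)=0$ --- is exactly the argument the paper has in mind: the paper gives no proof of its own but invokes ``the same arguments'' as Artin's Lemmas I.9.1 and I.9.2, and those arguments are precisely yours. Your depth bookkeeping is also the right one: at points of $Z\cap X_{\textup{reg}}$ the local ring is regular of dimension at least $\codim Z$, so the codimension hypothesis handles those points, while the stated depth hypothesis handles the points of $X\setminus X_{\textup{reg}}$; together they give $\cH^0_Z=\cH^1_Z=0$ (resp.\ also $H^2_Z(\sO_X)=0$). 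One remark on part (2): the sentence ``this sequence is precisely the infinitesimal criterion of flatness'' compresses a genuine, if standard, verification. Writing $B_A:=H^0(X^\circ_A,\sO_{X^\circ_A})$, one must still identify the kernel $R\cdot t$ of $B_A\to B_{A''}$ with $t\cdot B_A$ and identify $B_A/\frm B_A$ with $R$; both follow from the surjectivity of the whole chain $B_A\to B_{A''}\to\cdots\to R$ (each step surjective by the same $H^1$-vanishing), after which the local criterion of flatness applies and the induction closes as you describe.

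The one step that is wrong as written is in part (1): you invoke ``the extension to $X$ of the underlying automorphism of $X^\circ$ (again by normality and $\codim_X Z\ge 2$).'' For non-affine $X$ this is false. Take $X$ the total space of $\sO_{\mathbb{P}^1}(-1)\oplus\sO_{\mathbb{P}^1}(-1)$, $X^\circ$ the complement of the zero section $C$, and $A=\mathbb{C}$: the Atiyah flop induces an automorphism of $X^\circ$ that does not extend to $X$ (if it did, the flop $X\dashrightarrow X^+$ would be a morphism over the conifold, which it is not). So under the literal reading ``any isomorphism of $A$-schemes,'' no argument can supply this step --- indeed the statement itself then fails. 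What saves the lemma, and your proof, is the deformation-theoretic reading that is clearly intended and is the only one used in the paper (Proposition~\ref{proposition:extension_deformation_q_etale_covers}): the isomorphism $(X_A^1)^\circ\cong(X_A^2)^\circ$ is an isomorphism of \emph{deformations}, i.e.\ it induces the identity on $X^\circ$. Then the underlying map of spaces is the identity, pushing the comorphism forward along $i$ and using $\sO_{X_A^j}\cong i_*\bigl(i^{-1}\sO_{X_A^j}\bigr)$ gives the extension, and uniqueness holds because sections are determined by their restrictions to $X^\circ$, exactly as you say. You should either add that hypothesis explicitly, or restrict your automorphism-extension claim to affine $X$, where it does hold by applying Hartogs to the coordinate ring.
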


\begin{prop}\label{proposition:extension_deformation_q_etale_covers}
Let $\gamma\colon Y \to X$ be a quasi-\'etale cover of normal proper varieties. Suppose that
$X$ is smooth in codimension two, 
and that $\textup{depth}\,\sO_{Y,y} \ge 3$ for any point 
$y \in Y\setminus Y_{\textup{reg}}$. Let $(A,\mathfrak{m})$ be a complete local noetherian $\mathbb{C}$-algebra with residue field $\mathbb{C}$, and let $\mathcal{X}$ be a proper deformation of $X$ over $A$. 
Then there exists a proper deformation $\mathcal{Y}$ of $Y$ over $A$, and a finite morphism $\Gamma\colon \mathcal{Y} \to \mathcal{X}$ extending $\gamma$. If moreover $A$ is regular, then $\Gamma$ is a quasi-\'etale cover. 
\end{prop}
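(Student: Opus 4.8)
The plan is to construct the deformation $\mathcal{Y}$ of $Y$ by first deforming over the smooth locus, where the quasi-étale cover is genuinely étale, and then extending across the singular locus using the depth and codimension hypotheses via Lemma~\ref{lemma:extension}. The key point is that $\gamma$ being étale in codimension one means it is étale over the locus where $X$ is smooth, so over the regular locus the cover behaves like an honest covering space and lifts uniquely to any infinitesimal or formal deformation.

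First I would work Artinian by Artinian. Write $A = \varprojlim A/\mathfrak{m}^{n+1}$ and set $A_n := A/\mathfrak{m}^{n+1}$, and let $\mathcal{X}_n := \mathcal{X}\times_A A_n$ be the induced deformation of $X$ over $A_n$. Let $X^\circ := X_{\textup{reg}}$ and $Y^\circ := \gamma^{-1}(X^\circ)$; since $\gamma$ is quasi-étale we have $Y^\circ \subset Y_{\textup{reg}}$, and $\gamma^\circ := \gamma_{|Y^\circ}\colon Y^\circ \to X^\circ$ is a finite étale cover. Over the smooth locus, étale covers lift uniquely and functorially along infinitesimal thickenings: for each $n$ the restriction $\mathcal{X}_n^\circ := \mathcal{X}_n \times_X X^\circ$ is a deformation of $X^\circ$, and by the topological invariance of the étale site there is a unique finite étale cover $\mathcal{Y}_n^\circ \to \mathcal{X}_n^\circ$ deforming $\gamma^\circ$. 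These are compatible as $n$ varies, yielding a compatible system $(\mathcal{Y}_n^\circ)_n$. The main technical work is to promote this deformation of $Y^\circ$ to a deformation of all of $Y$ and to glue/extend $\gamma$ across the singular locus.

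Here the two parts of Lemma~\ref{lemma:extension} do the heavy lifting, applied locally on an affine open cover of $Y$ (so that the affineness and depth hypotheses of part~(2) apply). The hypothesis $\codim(X\setminus X^\circ)\ge 3$ together with $\depth\sO_{Y,y}\ge 3$ lets me invoke Lemma~\ref{lemma:extension}(2): locally on $Y$, setting $\mathcal{Y}_n := \Spec H^0(\mathcal{Y}_n^\circ, \sO_{\mathcal{Y}_n^\circ})$ produces a flat deformation $\mathcal{Y}_n$ of $Y$ over $A_n$ extending $\mathcal{Y}_n^\circ$. The uniqueness clause in Lemma~\ref{lemma:extension}(1) guarantees these local extensions glue to a global deformation $\mathcal{Y}_n$ of $Y$ over $A_n$ and that the system is compatible in $n$. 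The morphism $\gamma$ extends: over the regular locus $\gamma^\circ$ deforms to $\mathcal{Y}_n^\circ\to\mathcal{X}_n^\circ$, and by the extension-of-sections mechanism this extends to a finite morphism $\Gamma_n\colon \mathcal{Y}_n \to \mathcal{X}_n$ (finiteness follows from $\Gamma_n$ being finite over the complement of a codimension-$\ge 2$ set combined with the normality/depth of the total space). Passing to the formal limit $n\to\infty$ and then invoking Grothendieck's existence theorem for proper morphisms algebraizes the formal deformation $\widehat{\mathcal{Y}}$ and the finite morphism $\widehat\Gamma$ to the desired proper deformation $\mathcal{Y}$ of $Y$ over $A$ with a finite $\Gamma\colon\mathcal{Y}\to\mathcal{X}$ extending $\gamma$.

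For the final assertion, suppose $A$ is regular. I must show $\Gamma$ is étale in codimension one, i.e.\ that $\Gamma$ branches only over a codimension-$\ge 2$ locus of $\mathcal{X}$. By construction $\Gamma$ restricts to the étale cover of regular loci fiberwise over the smooth locus, so $\Gamma$ is étale over $\mathcal{X}_{\textup{reg}}$ away from where the central fiber meets the branch locus; the Nagata--Zariski purity theorem reduces the claim to checking that the branch locus has codimension $\ge 2$ in $\mathcal{X}$. Since $\mathcal{X}$ is a deformation over the regular base $A$, it is normal (indeed the total space inherits normality from the klt central fiber by the inversion-of-adjunction circle of ideas, cf.\ Fact~\ref{remark:inversion_adjunction}), and the ramification of $\Gamma$ is concentrated over $\Sing(\mathcal{X})$, which the codimension hypothesis on $X$ forces to have codimension $\ge 2$ in the total space. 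The hard part will be the careful bookkeeping in the gluing and algebraization: ensuring the local extensions from Lemma~\ref{lemma:extension}(2) are canonical enough to patch (which is exactly what the uniqueness in part~(1) provides) and verifying that finiteness of $\Gamma$ survives the formal-to-algebraic passage, rather than any single deep input.
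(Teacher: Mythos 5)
Your construction of $(\mathcal{Y},\Gamma)$ follows the paper's own route: lift the \'etale cover $\gamma^\circ$ over $X^\circ=X_{\textup{reg}}$ to each infinitesimal neighborhood $\mathcal{X}_n^\circ$ by topological invariance of the \'etale site, extend across the singular locus using Lemma~\ref{lemma:extension} and the depth/codimension hypotheses (the paper packages this as the relative $\textup{Spec}$ over $X_n$ of $(i_n\circ\gamma_n^\circ)_*\sO_{Y_n^\circ}$, with finiteness from \cite[Expos\'e VIII, Corollaire 2.3]{sga2}, rather than gluing local $\textup{Spec}\,H^0$'s, but these are equivalent), then algebraize. One caution on algebraization: what makes it work is that $\wh{\mathcal{Y}}$ is \emph{finite} over $\wh{\mathcal{X}}$, so one algebraizes the coherent $\sO_{\wh{\mathcal{X}}}$-algebra $\wh\Gamma_*\sO_{\wh{\mathcal{Y}}}$ via Grothendieck existence on the proper scheme $\mathcal{X}/\Spec A$ (this is \cite[Proposition 5.4.4]{ega11}); properness of $\wh{\mathcal{Y}}$ over $\textup{Spf}\,A$ alone would not suffice.

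The genuine gap is in the last assertion, that $\Gamma$ is quasi-\'etale when $A$ is regular. Your argument asserts that ``the ramification of $\Gamma$ is concentrated over $\Sing(\mathcal{X})$'' by Nagata--Zariski purity, but this is circular: purity says that the branch locus of a finite cover of a \emph{regular} scheme is pure of codimension one; the statement that a finite cover branches only over the singular locus is a consequence of purity \emph{for covers already known to be \'etale in codimension one}, which is exactly what is to be proved here. Moreover, your normality claim for $\mathcal{X}$ invokes a klt hypothesis on the central fiber together with Fact~\ref{remark:inversion_adjunction}; but the proposition does not assume $X$ klt (only normal, proper, smooth in codimension two, plus the depth condition on $Y$), and that Fact concerns $\mathbb{Q}$-Gorenstein one-parameter families, not deformations over $\Spec\mathbb{C}[[x_1,\ldots,x_m]]$. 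Even granting normality of $\mathcal{X}$ and $\mathcal{Y}$, purity would only force the branch locus to be divisorial, not empty over $\mathcal{X}_{\textup{reg}}$. The paper's actual argument is different and is the step you are missing: suppose $\Gamma$ ramifies at a codimension one point $x\in\mathcal{X}$ not lying in the special fiber; since the branch locus is closed, it contains $\wb{\{x\}}$, which meets the special fiber $X$ by properness over the local base, and --- this is where regularity of $A$ enters --- $X$ is cut out by $\dim A$ regular parameters, so $\wb{\{x\}}\cap X$ has codimension at most one in $X$. Ramification of $\Gamma$ at points of $Y$ lying over $\wb{\{x\}}\cap X$ forces ramification of $\gamma$ at those points (\'etaleness is detected on fibers for maps of flat $A$-schemes), so $\gamma$ would ramify along a subset of codimension $\le 1$ of $X$, contradicting that $\gamma$ is quasi-\'etale. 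Your proposal never brings the regularity of $A$ into contact with the branch locus in this way, and without that specialization-to-the-central-fiber argument the conclusion does not follow.
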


\begin{proof}
For any non-negative integer $m$, write $A_m:=A/\mathfrak{m}^{m+1}$, $S:=\textup{Spec}\, A$,
$S_m:=\textup{Spec}\, A_m$, and $X_m:=\mathcal{X} \times_S S_m$. We will also denote by $\wh{\mathcal{X}}$ the formal completion of $\mathcal{X}$ along $X$. Note that $\wh{\mathcal{X}}$ is the colimit of the $X_m$. Moreover, it comes with a proper morphism onto $\wh S := \textup{Spf}\,A$.

Let $X^\circ$ be the smooth locus of $X$, and set $Y^\circ:=\gamma^{-1}(X^\circ)$. By the Nagata-Zariski purity theorem, $\gamma$ branches only over the singular set of $X$, and hence $\gamma_{|Y^\circ}\colon Y^\circ \to X^\circ$ is an \'etale cover. Note that $Y \setminus Y^\circ$ has codimension at least three and that $Y^\circ \subset Y_{\textup{reg}}$.

Let $i_m\colon X_m^\circ \subset X_m$ be the open subscheme with underlying topological space $X^\circ$. By \cite[Th\'eor\`eme 18.1.2]{ega32}, there exists a finite \'etale cover $\gamma_m^\circ\colon Y_m^\circ \to X_m^\circ$  
such that $Y_m^\circ \cong Y^\circ_{m+1}\times_{S_{m+1}} S_m$ for any integer $m \ge 0$. 
Set $Y_m:=\textup{Spec}_{X_m}(i_m\circ \gamma_m^\circ)_*\sO_{Y_m^\circ}$, and denote 
by $\gamma_m\colon Y_m \to X_m$ the natural morphism. Note that $Y_m^\circ$ can be identified with the open subscheme of $Y_m$ with underlying topological space $Y^\circ$.
Applying Lemma \ref{lemma:extension} above, we see that $Y_m$ is flat over $S_m$, and that
$\gamma_{m+1}\times_{S_{m+1}S_m}=\gamma_m$.
Moreover, by \cite[Expos\'e VIII, Corollaire 2.3]{sga2}, $\gamma_m$ is finite. 
Let $\wh{\mathcal{Y}}$ denote the colimit of the $Y_m$; $\wh{\mathcal{Y}}$ is a noetherian formal scheme, proper and flat 
over $\wh S$. 
It comes with a finite morphism $\wh \Gamma \colon \wh{\mathcal{Y}} \to \wh{\mathcal{X}}$.
Now, by \cite[Proposition 5.4.4]{ega11}, 
$\wh{\mathcal{Y}}$ is algebraizable. More precisely, there exists a scheme $\mathcal{Y}$ proper over $S$ such that 
$\wh{\mathcal{Y}}$ identifies with the formal completion of $\mathcal{Y}$ along $Y$. 
Moreover, the morphism $\wh\Gamma \colon \wh{\mathcal{Y}} \to \wh{\mathcal{X}}$ of formal schemes is induced by a finite morphism of schemes $\Gamma\colon\mathcal{Y} \to \mathcal{X}$ (see proof of \cite[Proposition 5.4.4]{ega11}). Note that $\mathcal{Y}$ is flat over $S$ by \cite[Chapitre III, \S 5, Th\'eor\`eme 2 et Proposition 2]{bourbaki3-4}.

Suppose from now on that $A$ is regular.
Let $x \in \mathcal{X}$ be a codimension one point, not contained in the special fiber $\mathcal{X}_\mathfrak{m}$, and suppose that $\Gamma$ is ramified at $x$.
Then $\gamma$ must be ramified along $\wb{\{x\}}\cap \mathcal{X}_{\mathfrak{m}}$.
On the other hand, $\wb{\{x\}}\cap \mathcal{X}_{\mathfrak{m}}$ has codimension one in $\mathcal{X}_{\mathfrak{m}}$ since $A$ is regular. This yields a contradiction, and shows that $\Gamma$ is a quasi-\'etale cover, completing the proof of the proposition. 
\end{proof}

Before proving Theorem \ref{thm:fundamental_group} below, we note the following consequence of Proposition \ref{proposition:extension_deformation_q_etale_covers}.

\begin{cor}\label{cor:above}
Let $X$ be a normal proper variety with klt singularities, and assume that $X$ is smooth in codimension two. Write $A:=\mathbb{C}[[x_1,\ldots,x_m]]$, and let $\mathcal{X}$ be a proper deformation of $X$ over $A$. 
Then the natural map 
$\pi_1^{\textup{\'et}}\big(X_{\textup{reg}}\big) \to \pi_1^{\textup{\'et}}\big(\mathcal{X}_{\textup{reg}}\big)$
is injective.
\end{cor}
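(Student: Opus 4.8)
The plan is to reduce to Proposition~\ref{proposition:extension_deformation_q_etale_covers} through the standard correspondence between $\pi_1^{\textup{\'et}}$ and finite \'etale covers. First I record that the map in question is induced by an honest inclusion $i\colon X_{\textup{reg}}\into\mathcal{X}_{\textup{reg}}$: since $A=\mathbb{C}[[x_1,\ldots,x_m]]$ is regular and $\mathcal{X}\to\Spec A$ is flat, the elements $x_1,\ldots,x_m$ form a regular sequence on each local ring $\mathcal{O}_{\mathcal{X},x}$; when $x\in X_{\textup{reg}}$ the quotient $\mathcal{O}_{\mathcal{X},x}/(x_1,\ldots,x_m)\cong\mathcal{O}_{X,x}$ is regular, and regularity ascends along a regular sequence, so $\mathcal{X}$ is regular at $x$. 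Hence $X_{\textup{reg}}\subseteq\mathcal{X}_{\textup{reg}}$. Now recall that a homomorphism of profinite groups $\pi_1^{\textup{\'et}}(X_{\textup{reg}})\to\pi_1^{\textup{\'et}}(\mathcal{X}_{\textup{reg}})$ is injective provided every connected finite \'etale cover of $X_{\textup{reg}}$ is the restriction along $i$ of a finite \'etale cover of $\mathcal{X}_{\textup{reg}}$: this amounts to saying that every open subgroup of $\pi_1^{\textup{\'et}}(X_{\textup{reg}})$ is the preimage of an open subgroup of $\pi_1^{\textup{\'et}}(\mathcal{X}_{\textup{reg}})$, and since the open subgroups of a profinite group intersect trivially, the kernel is then trivial. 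It therefore suffices to extend covers.

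So fix a connected finite \'etale cover $V\to X_{\textup{reg}}$ and let $\gamma\colon Y\to X$ be the quasi-\'etale cover obtained by normalizing $X$ in the function field of $V$, i.e. $Y:=\Spec_X (j_*\mathcal{O}_V)$ with $j\colon X_{\textup{reg}}\into X$; then $Y$ is a normal proper variety, $\gamma$ is \'etale over $X_{\textup{reg}}$, and $\gamma^{-1}(X_{\textup{reg}})=V$. To feed this into Proposition~\ref{proposition:extension_deformation_q_etale_covers} I must verify the depth hypothesis. Since $X$ is klt it is Cohen--Macaulay (\cite[Theorem 1.3.6]{kmm}), and by \cite[Proposition 3.16]{kollar97} the cover $Y$ is klt as well, hence Cohen--Macaulay. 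Moreover $Y$ is smooth in codimension two: by Nagata--Zariski purity $\gamma$ branches only over $\Sing X$, which has codimension at least three, and $\gamma$ is finite, so $\Sing Y$ has codimension at least three as well. Consequently $\dim\mathcal{O}_{Y,y}\ge 3$ for every $y\in Y\smallsetminus Y_{\textup{reg}}$, and Cohen--Macaulayness gives $\depth\mathcal{O}_{Y,y}\ge 3$.

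Proposition~\ref{proposition:extension_deformation_q_etale_covers} now produces a proper deformation $\mathcal{Y}$ of $Y$ over $A$ together with a finite morphism $\Gamma\colon\mathcal{Y}\to\mathcal{X}$ extending $\gamma$, and, $A$ being regular, $\Gamma$ is a quasi-\'etale cover. In particular $\Gamma$ is \'etale over $\mathcal{X}_{\textup{reg}}$, so $W:=\Gamma^{-1}(\mathcal{X}_{\textup{reg}})\to\mathcal{X}_{\textup{reg}}$ is a finite \'etale cover. Because $\Gamma$ restricts to $\gamma$ over the closed point and $X_{\textup{reg}}\subseteq\mathcal{X}_{\textup{reg}}$, one computes $W\times_{\mathcal{X}_{\textup{reg}}}X_{\textup{reg}}=\Gamma^{-1}(X_{\textup{reg}})=\gamma^{-1}(X_{\textup{reg}})=V$, so $V$ is the restriction along $i$ of the cover $W$. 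By the criterion of the first paragraph, the map $\pi_1^{\textup{\'et}}(X_{\textup{reg}})\to\pi_1^{\textup{\'et}}(\mathcal{X}_{\textup{reg}})$ is injective. The genuinely hard input is Proposition~\ref{proposition:extension_deformation_q_etale_covers} itself; within this corollary the points demanding care are the verification of the depth bound (so that the proposition applies) and the identification $W\times_{\mathcal{X}_{\textup{reg}}}X_{\textup{reg}}=V$, which hinges on $\Gamma$ genuinely extending $\gamma$ and on the inclusion $X_{\textup{reg}}\subseteq\mathcal{X}_{\textup{reg}}$.
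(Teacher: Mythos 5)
Your proposal is correct and follows essentially the same route as the paper: reduce injectivity to extending connected finite \'etale covers of $X_{\textup{reg}}$, pass to the normalization $Y$ of $X$ in the function field of the cover, verify the depth hypothesis via klt $\Rightarrow$ Cohen--Macaulay together with smoothness in codimension two from Nagata--Zariski purity, and apply Proposition~\ref{proposition:extension_deformation_q_etale_covers}. Your write-up merely makes explicit two points the paper leaves implicit, namely the inclusion $X_{\textup{reg}}\subseteq\mathcal{X}_{\textup{reg}}$ and the profinite-group translation of the cover-extension criterion, both of which you handle correctly.
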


\begin{proof}
Given a finite \'etale cover $\gamma^\circ\colon Y^\circ \to X_{\textup{reg}}$ with $Y^\circ$ connected, we need to show that there exists a finite \'etale cover $\Gamma^\circ\colon\mathcal{Y}^\circ \to \mathcal{X}_{\textup{reg}}$ such that
$Y^\circ$ is a connected component of ${\mathcal{Y}^\circ}_{|(\Gamma^\circ)^{-1}(X_{\textup{reg}})}$ and such that $\gamma^\circ$ is induced by $\Gamma^\circ$.

Let $\gamma\colon Y \to X$ be the normalization of $X$ in the function field of $Y^\circ$. Note that $\gamma$ is a quasi-\'etale cover.
Applying \cite[Proposition 3.16]{kollar97} to $\gamma$, we see that
$Y$ has klt singularities. It follows that $Y$ is Cohen-Macaulay by \cite[Theorem 1.3.6]{kmm}.
By the Nagata-Zariski purity theorem, $\gamma$ branches only over the singular set of $X$, and hence $Y$ is smooth in codimension two. Combining the previous two assertions, one sees that the assumption from Proposition \ref{proposition:extension_deformation_q_etale_covers} about the depth of points in $Y\smallsetminus Y_{\reg}$ is satisfied. Applying the aforementioned proposition then proves the corollary.
\end{proof}

\begin{proof}[Proof of Theorem \ref{thm:fundamental_group}]
By the semicontinuity theorem, we have 
$h^0(\mathcal{X}_{\wb K},\sO_{\mathcal{X}_{\wb K}})=h^0(X,\sO_{X})=1$, and hence $\mathcal{X}_{\wb K}$ is connected.

Let $\gamma^\circ\colon Y^\circ \to X_{\textup{reg}}$ be a finite \'etale cover with $Y^\circ$ connected, and
let $\gamma\colon Y \to X$ be the normalization of $X$ in the function field of $Y^\circ$. Note that $\gamma$ is a quasi-\'etale cover. The same argument used in the proof of Corollary \ref{cor:above} shows that 
there exists a deformation $\mathcal{Y}$ of $Y$ over $A$ and a quasi-\'etale cover $\Gamma\colon \mathcal{Y} \to \mathcal{X}$ over $A$ extending $\gamma$.
By the semicontinuity theorem again, we see that $\mathcal{Y}_{\wb K}$ is connected. Moreover, by the Nagata-Zariski purity theorem, the finite morphism 
$\gamma_{\wb K}\colon \mathcal{Y}_{\wb K} \to \mathcal{X}_{\wb K}$ is \'etale, and hence an isomorphism since 
$\pi_1^{\textup{\'et}}\big(\mathcal{X}_{\wb K}\big)=\{1\}$. This implies that $\gamma$ is an isomorphism as well, completing the proof of the theorem.
\end{proof}

We will also need the following observation. The proof follows the line of argument given in \cite[Corollary 12.1.9]{kollar_mori_flips}.

\begin{lemma}\label{lemma:q_gorenstein}
Let $X$ be a normal projective variety with klt singularities. If $X$ is smooth in codimension two, then 
any projective smoothing over an algebraic curve is a $\mathbb{Q}$-Gorenstein smoothing. 
\end{lemma}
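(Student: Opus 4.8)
The plan is to reduce the statement to the Ramanujam--Samuel theorem, following the line of argument of \cite[Corollary 12.1.9]{kollar_mori_flips}. Let $f\colon \sX \to C$ be a projective smoothing of $X$ over an algebraic curve, with $X \cong \sX_{t_0}$. Since the conclusion is local on $\sX$ near the central fibre, I may assume $C$ is smooth (this is how the smoothing arises in our applications; in general one reduces to it by base change to the normalisation of $C$, $\mathbb{Q}$-Cartierness being insensitive to finite surjective base change). Then all fibres of $f$ are normal, so $\sX$ is normal by Fact~\ref{remark:normality}, and $K_{\sX/C}=K_\sX-f^*K_C$ is $\mathbb{Q}$-Cartier if and only if $K_\sX$ is. Working with the Weil divisor $K_{\sX/C}$, the goal is to produce $m_0>0$ with $\sO_\sX(m_0K_{\sX/C})$ invertible; as this sheaf is already a line bundle on $\sX_{\reg}$, it suffices to check invertibility at the closed points of the singular locus.

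Second, I would pin down the singularities. Each fibre $\sX_t$ with $t\neq t_0$ is a smooth Cartier divisor, and $X_{\reg}$ is smooth, so by the fact that a Cartier divisor regular at a point forces the ambient space to be regular there, one gets $\Sing \sX \subseteq \Sing X$. As $X$ is smooth in codimension two, $\codim_X \Sing X \ge 3$, whence $\codim_\sX \Sing \sX \ge 4$. Moreover $X$ is Cohen--Macaulay, being klt and hence having rational singularities (\cite[Theorem 1.3.6]{kmm}); since $f$ is flat over the smooth curve $C$ with Cohen--Macaulay special fibre, $\sX$ is itself Cohen--Macaulay, so it has depth $\ge 4$ at every point of $\Sing\sX$.

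Third comes the key step. Fix a closed point $x\in\Sing\sX\subseteq\Sing X$ and let $t\in\sO_{\sX,x}$ be a local equation of the Cartier divisor $X$, so that $\sO_{X,x}=\sO_{\sX,x}/(t)$ is normal. Because $X$ is klt, some fixed multiple $m_0K_X$ is Cartier, i.e. $[m_0K_X]=0$ in the local class group $\mathrm{Cl}(\sO_{X,x})$. By adjunction along the fibre (valid on $X_{\reg}$, where $f$ is smooth and $\sO_\sX(X)|_X$ is trivial) the restriction map $\mathrm{Cl}(\sO_{\sX,x})\to\mathrm{Cl}(\sO_{X,x})$ sends $[m_0K_{\sX/C}]$ to $[m_0K_X]=0$. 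The Ramanujam--Samuel theorem (Grothendieck--Lefschetz for local class groups) then applies: since $\sX$ is Cohen--Macaulay and its non-Cartier locus sits in codimension $\ge 4$, this restriction map is injective. Hence $[m_0K_{\sX/C}]=0$ in $\mathrm{Cl}(\sO_{\sX,x})$, so $m_0K_{\sX/C}$ is Cartier at $x$. As $x$ ranges over the closed points of $\Sing\sX$ and $K_{\sX/C}$ is already Cartier on $\sX_{\reg}$, the divisor $m_0K_{\sX/C}$ is Cartier on all of $\sX$, so $f$ is a $\mathbb{Q}$-Gorenstein smoothing.

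The hard part is the third step: verifying the hypotheses of the Ramanujam--Samuel/Grothendieck--Lefschetz statement, and in particular the injectivity of the specialisation map on local class groups. This is exactly where the hypothesis \emph{smooth in codimension two} is indispensable, as it is precisely what places $\Sing\sX$ in codimension $\ge 4$ and thereby supplies $\sX$ with enough depth along its non-Cartier locus for a divisor class to be detected on the Cartier divisor $X$. One must also be careful identifying the restriction of $\sO_\sX(m_0K_{\sX/C})$ with $\sO_X(m_0K_X)$, which again rests on the normality of $X$ together with this codimension bound; without it the restriction map can acquire a kernel and the conclusion genuinely fails.
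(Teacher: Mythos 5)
Your overall route is the same as the paper's: its proof is likewise modelled on \cite[Corollary 12.1.9]{kollar_mori_flips}, and your preliminary steps (reduction to a smooth base and normal total space, $X$ Cohen--Macaulay because it is klt, $\Sing\sX\subseteq\Sing X$ of codimension $\ge 4$ in $\sX$, adjunction identifying the restriction of $m_0K_{\sX/C}$ with $m_0K_X$ over $X_{\reg}$) all have exact counterparts in the paper. The difference is how the crucial step is discharged, and there your argument has a genuine gap.

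The crux is the injectivity of $\mathrm{Cl}(\sO_{\sX,x})\to\mathrm{Cl}(\sO_{X,x})$, which you justify by appealing to ``the Ramanujam--Samuel theorem''. That theorem (see \cite[21.14]{ega32}) is not the statement you use: its hypotheses involve (geometric) factoriality of the fibre at the point in question, not Cohen--Macaulayness plus a codimension bound, and factoriality fails exactly in the situations this lemma is designed for --- a klt point that is smooth in codimension two need not be factorial or even $\mathbb{Q}$-factorial, e.g.\ the ordinary double points of the nodal Calabi--Yau hypersurfaces in Section 7 of the paper, which admit small resolutions, so their local class groups contain non-torsion non-Cartier classes. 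Moreover, no theorem of the shape ``flat family, normal Cohen--Macaulay fibre, divisorial restriction of $D$ to the fibre is Cartier $\Rightarrow D$ is Cartier'' can be true: for the quotient singularity $\frac{1}{4}(1,1)$ (the cone $X_0$ over the rational normal quartic) smoothed over the Artin component of its versal deformation, the total space $\sX$ and the fibre are normal and Cohen--Macaulay, the divisorial restriction of $2K_{\sX/\Delta}$ is the Cartier divisor $2K_{X_0}$, and yet $K_{\sX/\Delta}$ is not $\mathbb{Q}$-Cartier. What blocks that example and saves yours is the depth of the \emph{fibre} along its singular locus ($\ge 3$ in your setting, $=2$ there), not the total-space conditions you cite as the operative hypothesis. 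The statement you actually need --- injectivity of the restriction of local class groups when the fibre is Cohen--Macaulay with singular locus of codimension $\ge 3$ --- is true, and your reductions do supply its hypotheses, but it is not Ramanujam--Samuel and is not citation-free: it is in substance \cite[Lemma 12.1.8]{kollar_mori_flips}, which is precisely what the paper invokes (alternatively one can prove it by the formal-completion local Lefschetz argument of \cite{sga2}, using $H^2_{\Sing X}(\sO_X)=0$ and Noetherian induction over the non-Cartier locus). So the skeleton of your proof is correct, but as written its central claim rests on a theorem that does not apply; replacing that appeal by the correct reference or argument brings you back to the paper's proof.
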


\begin{proof}
Let $\sX$ be a normal variety, and let $f \colon \sX \to C$ be a flat projective morphism with connected fibers
onto a smooth connected curve $C$ such that $X \cong f^{-1}(t_0)$ for some point $t_0$ on $C$ and such that $f^{-1}(t)$ is smooth for $t \neq t_0$.

By \cite[Theorem 1.3.6]{kmm}, we know that
$X$ is Cohen-Macaulay. Let $m$ be a positive integer such that $mK_X$ is a Cartier divisor. 
Set $U:=\sX \setminus (X \setminus X_{\textup{reg}}) \subset \sX_{\textup{reg}}$, and
denote by $j\colon U \into \sX$ and $i\colon X_{\textup{reg}}=\sX_{t_0}\cap U \into X$
the natural inclusions. By \cite[Proposition 1.6]{hartshorne80}, 
we have $j_*\big({\sO_\sX(mK_{\sX/C})}_{|U}\big) \cong \sO_\sX(mK_{\sX/C})$
and 
$i_*\big(\sO_\sX(mK_{\sX/C})_{|X_{\textup{reg}}}\big)\cong i_*\big(\sO_X(mK_X)_{|X_{\textup{reg}}}\big)\cong \sO_X(mK_X)$. Applying
\cite[Lemma 12.1.8]{kollar_mori_flips}, we see that
${\sO_\sX(mK_{\sX/C})}_{|X} \cong \sO_X(mK_X)$. This implies
that ${\sO_\sX(mK_{\sX/C})}$ is a Cartier divisor, proving the lemma.
\end{proof}

We are now in position to prove our main result.

\begin{proof}[Proof of Theorem~A]
We maintain notation and assumptions of Theorem~A.

Applying Lemma \ref{lemma:smoothing_disk_versus_alg_curve2} and Lemma \ref{lemma:q_gorenstein}, we see that we may assume 
without loss of generality that $X$ admits a 
projective $\mathbb{Q}$-Gorenstein 
smoothing $f \colon \sX \to C$ over an algebraic curve $C$. Let $t_0$ be a point on $C$ such that $X \cong f^{-1}(t_0)$.
By Proposition \ref{prop:decomposition} and Lemma \ref{lemma:q_gorenstein} again, we may also assume that $\sX_t$ is an irreducible and simply-connected Calabi-Yau, or
symplectic manifold for any point $t \neq t_0$, and that $T_X$ is slope-stable with respect to any ample polarization on $X$.

Denote by $A$ the completion of the local ring $\sO_{C,t_0}$. Note that $A\cong \mathbb{C}[[t]]$ by a theorem of Cohen. Let $K$ denotes the field of fractions of $A$, and let $\wb K$ be an algebraic closure of $K$.
Write $S:=\textup{Spec}\, A$, and
$\mathcal{X}:= \sX\times_C S$. 

Applying \cite[Expos\'e X, Th\'eor\`eme 3.8]{sga1}, we see that 
$\pi_1^{\textup{\'et}}\big(\mathcal{X}_{\wb K}\big)=\{1\}$. It follows 
from Theorem \ref{thm:fundamental_group}
that
$\pi_1^{\textup{\'et}}\big(X_{\textup{reg}}\big)=\{1\}$, and hence
the tangent sheaf $T_X$ is strongly stable. The theorem now follows from Theorem~\ref{thm:ggk}.
\end{proof}

\begin{rem}
There is a alternative proof of Theorem~A in the case where $\X_t$ is irreducible symplectic, still assuming the $X:=\X_0$ is smooth in codimension two. Indeed, in that case a theorem of Namikawa \cite[Corollary 2]{Namikawa06} asserts that 
if $\pi\colon Y\to X$ is a $\mathbb Q$-factorial terminalization of $X$, then $Y$ is smooth and $\pi$ is a symplectic resolution. Note that $\mathbb Q$-factorial terminalizations always exist for varieties with canonical singularities by \cite[Corollary 1.4.3]{bchm}. By a result of Kaledin (see \cite[Proposition 1.2]{Kaledin}), a symplectic resolution is a semi-small morphism; in particular, $2\,\codim\,\pi^{-1}(X_{\rm sing}) \ge \codim\, X_{\rm sing}$, and therefore $\codim\, \pi^{-1}(X_{\rm sing})\ge 2$. Set $Y^{\circ}:=Y\smallsetminus \pi^{-1}(X_{\rm sing})$, and note that $Y^{\circ}\cong X_{\reg}$ and $\codim\, (Y\smallsetminus Y^{\circ}) \ge 2$. In particular, we have $\pi_1(Y^{\circ})\cong \pi_1(Y)$. 
By \cite[Theorem 1.1]{Takayama2003}, we also have $\pi_1(Y)\cong \pi_1(X)$, and from \cite[Lemma 5.2.2]{Kollar93}), we see that
$\pi_1(X)=0$. Eventually, $X_{\reg}$ is simply-connected, and as $\sT_X$ is stable by Proposition~\ref{prop:holonomy}, it is automatically strongly stable. 
\end{rem}

\section{Examples}
\label{sec6}
In this section, we first give examples of smoothable (irreducible) Calabi-Yau and symplectic varieties. We also
collect examples which illustrate to what extent our results are sharp. 
We maintain notation of Section \ref{sec3}.

\subsection{Examples of smoothable Calabi-Yau varieties}
\begin{exmp}[Nodal hypersurfaces]
Let $X$ be a nodal degree $n+2$ hypersurface in $\mathbb P^{n+1}$ with $n\ge 3$. This means that the singularities of $X$ are isolated and locally analytically isomorphic to the germ at $0$ of the quadric 
$$Q:=\left\{z\in \mathbb C^{n+1} \, | \, \sum_{i=1}^{n+1}z_i^2=0\right\}.$$ Then $X$ has canonical Gorenstein singularities, and it is smooth in codimension $2$. Moreover, $X$ has trivial canonical bundle and is smoothable by irreducible Calabi-Yau manifolds. By Theorem~\ref{thm:fundamental_group}, one has $\pi_1^{\textup{\'et}}(X_{\reg})=\{1\}$. Then, Proposition~\ref{prop:extension_differential_forms} shows that $X$ is an irreducible Calabi-Yau variety.

This applies in particular to $$X:=\left\{[x_0:\cdots : x_4] \in \mathbb P^4 \,| \, x_0f+x_1g=0\right\}$$ where $f$ and $g$ are general homogeneous polynomials in $x_0, \ldots, x_4$ of degree $4$. Indeed, it is clear that $X$ has $16$ (isolated) nodal singularities. In that example, there is an alternative proof that bypasses Theorem~\ref{thm:fundamental_group}. Indeed, blowing up the plane $S:=(x_0=x_1=0)\subset \mathbb P^4$ yields a small resolution $\pi\colon \wt X\to X$ so in particular, we must have $\pi_1(X_{\reg})\cong \pi_1\big(\wt X\smallsetminus \pi^{-1}(S)\big)\cong \pi_1(\wt X)$ since $\codim\,\pi^{-1}(S) \ge2$. Applying 
successively \cite[Theorem 1.1]{Takayama2003} and the Lefschetz's hyperplane theorem for fundamental groups, we see that
$\pi_1(\wt X) \cong \pi_1(X)=\{1\}$, and hence $\pi_1(X_{\reg})=\{1\}$ as claimed.  
\end{exmp}

\begin{exmp}[Calabi-Yau threefolds with non-nodal singularities]
\label{ex1}
Let $S_1\to \mathbb P^1$ and $S_2\to \mathbb P^1$ be rational elliptic surfaces with sections, and let 
$X:=S_1 \times_{\mathbb P^1} S_2$ (see \cite{Schoen88}). 
In \cite[Example 5.9]{Namikawa94}, Namikawa gives conditions on the singular fibers of each fibration under which $X$ is a smoothable $\mathbb Q$-factorial 3-fold with trivial canonical class with one isolated terminal singularity locally analytically isomorphic to the germ at $0$ of the hypersurface $$\{z\in \mathbb C^4 \, | \, z_1^2+z_2^2+z_3(z_3+z_4)(z_3-z_4)=0\}.$$ As in the example above, one concludes that $X$ is an irreducible Calabi-Yau variety.
\end{exmp}

\begin{exmp}[Calabi-Yau threefolds with $\mathbb Q$-factorial isolated rational hypersurface singularities]
\label{ex2}
If $X$ is a projective variety of dimension three with canonical singularities and trivial canonical bundle, then Namikawa and Steenbrink proved in \cite[Theorem 1.3]{NS95} that $X$ admits a flat deformation to a smooth Calabi-Yau threefold provided that $X$ has only $\mathbb Q$-factorial, isolated, rational hypersurface singularities. 
If $X$ is singular, then it is an irreducible Calabi-Yau variety. The argument is as follows. 

Suppose from now on that $X$ is singular, and let $Y \to X$ be any quasi-\'etale cover. We claim that $Y$ is also singular. Suppose otherwise, and let $Z \to Y$ be a quasi-\'etale cover such that the induced finite morphism $Z \to X$ is Galois. By the Nagata-Zariski purity theorem, $Z\to Y$ is \'etale, and thus $Z$ is smooth as well. This in turn implies that $X$ has quotient singularities. On the other hand, any isolated quotient singularity in dimension at least three is rigid by 
\cite[Theorem 2]{Sch71} (see also Remark~\ref{rem:schl}), yielding a contradiction. 
By the Nagata-Zariski purity theorem again, we conclude that $Y$ has isolated singularities.

Applying Proposition~\ref{prop:decomposition}, we see that there exist a quasi-étale cover $Y \to X$ and a smoothing of $Y$ into 
irreducible and simply-connected Calabi-Yau or symplectic manifolds. By Proposition~\ref{prop:holonomy} and Theorem~\ref{thm:fundamental_group}, $Y$ is an irreducible Calabi-Yau variety, and hence so is $X$.

If the global assumption on $\mathbb Q$-factoriality is dropped, \cite[Theorem 2.4]{NS95} shows that one can still improve the singularities of $X$ by deforming it to a variety with only nodal singularities.    
\end{exmp}

\begin{rem}
It should be noted that from our classification point of view, threefolds are completely understood as they are known to satisfy a singular analogue of the Beauville-Bogomolov decomposition theorem by \cite{bobo}. Therefore, Examples~\ref{ex1} and \ref{ex2} should be thought as illustrative rather than new. 
For instance, \cite{bobo} can be used along the same lines as in Example~\ref{ex2} above to prove that if $X$ is a smoothable projective variety of dimension three with canonical isolated singularities and trivial canonical class, then $X$ is an irreducible Calabi-Yau variety. 
\end{rem}

\subsection{Examples of smoothable symplectic varieties}
\label{ex:symp}
Let $X$ be a normal variety.
Recall that we say that $X$ is a \textit{symplectic variety} if $X$ has canonical singularities and 
there exists $\omega \in H^0\big(X_{\reg}, \Omega^2_{X_{\reg}}\big)$ everywhere non-degenerate. If $\pi\colon \wt X \to X$ is a resolution of singularities of $X$, then $\omega$ extends to a holomorphic $2$-form on $\wt X$ by 
\cite[Theorem 1.4]{GKKP}. Let now $\pi\colon \wt X \to X$ be a $\mathbb Q$-factorial terminalization of $X$. Recall that the existence of
$\pi$ is established in \cite[Corollary 1.4.3]{bchm}. 
Then \cite[Corollary~2]{Namikawa06} asserts that $X$ is smoothable if and only if $\wt X$ is smooth.
Note that $\omega$ automatically extends to a symplectic form on $\wt X_{\reg}$ since 
$\pi$ is crepant. In particular, if $X$ is smoothable, then $X$ admits a symplectic resolution. Conversely, if $X$ admits a symplectic resolution, then $X$ is smoothable by \cite[Theorem~2.2]{Namikawa01}.
More precisely, any smoothing $X_t$ of $X$ by symplectic manifolds is a flat deformation of $\wt X$.

Note that Namikawa's theorem \cite[Corollary~2]{Namikawa06} provides smoothings by Kähler manifolds which are not projective.

\subsection{Examples of smoothable symplectic varieties with stable but not strongly stable tangent sheaf}\label{sec:nonsstable}

Recall from Proposition \ref{prop:holonomy} and Theorem~\ref{thm:fundamental_group} that if a normal projective variety $X$ with klt singularities and $K_X \equiv 0$ whose tangent sheaf is not strongly stable admits a projective smoothing into simply-connected irreducible Calabi-Yau, or symplectic manifolds then $\codim (X \smallsetminus X_{\rm reg})=2$.
 
\begin{exmp}[Singular Kummer surface]
\label{ex:kummer}
In the following example, we consider a degeneration of $K3$ surfaces to a singular Kummer surface $X$. 
More precisely, let $X=A/{\langle \pm 1\rangle}$ where $A$ is a principally polarized abelian surface. Then the following holds.

\medskip

\begin{itemize}
\item The Kummer surface $X$ admits a $\mathbb{Q}$-Gorenstein
projective smoothing $f \colon \sX \to C$ by $K3$ surfaces.
\item The tangent sheaf $T_X$ is not strongly stable.
\item Let $\mathscr L$ be a relatively ample line bundle on $\X$, and denote by $g$ the Ricci-flat K\"ahler metric on $X_{\reg}$ given by \cite[Theorem 7.5]{EGZ} applied to $(\sX_0,\sL_{|\sX_0})$. Given $x \in X_{\reg}$,
we have
$$\Hol(X_{\reg},g)_x\cong \mathbb Z/2\mathbb Z \quad \mbox{and} \quad \Hol^{\circ}(X_{\reg},g)_x=\{1\}.$$
\item We have $\pi_1(X)=\{1\}$ and $\pi_1(X_{\reg})$ is an extension of $\mathbb Z^4$ by $  \mathbb Z/2\mathbb Z$.
\end{itemize}

\medskip

It is well-known that $X$ can be realized as a singular quartic surface in $\P^3$ (see \cite[Theoremm 4.8.1]{birkenhake_lange}). In particular, it can be seen as the fiber over some point $t_0\in C$ of a projective flat family $f\colon\X\to C$ of quartic surfaces
over a smooth algebraic curve $C$ such that $\X_t$ is smooth if $t\neq t_0$. The total space $\X$ 
is an hypersurface in $C \times \mathbb{P}^3$, and hence $\mathbb Q$-Gorenstein. For $t\neq t_0$, $\X_t$ is a smooth quartic surface, and thus it is a $K3$ surface.

The tangent sheaf $T_X$ is not strongly stable as $X$ admits a quasi-étale cover $\gamma\colon A\to A/{\langle \pm 1\rangle}$ of degree two and $\sT_{A}\cong \sO_A^{\oplus 2}$ is obviously not stable. 

By \cite[Theorem 7.5]{EGZ}, there exists a unique closed positive $(1,1)$-current $\om$ with bounded potentials on $X$ such that $[\om] = [c_1(\sL_{|X})] \in H^2(X,\mathbb{R})$ and such that the restriction of $\omega$ to $X_{\reg}$ is a smooth Kähler form with zero Ricci curvature. Note that $g$ is the Riemannian metric associate with ${\om}_{|X_{\reg}}$ on $X_{\reg}$.
We claim that $g$ is flat, or equivalently that $\Hol^{\circ}(\Xr,g)_x=\{1\}$. Indeed, it is known that $\om$ is of orbifold type, that is, $\gamma^*\om$ defines a smooth Ricci-flat Kähler metric on $A$. On 
the other hand, it is well-known that any Ricci-flat metric on a torus is flat. Next, we show that 
$\Hol(\Xr,g)_x\cong \mathbb Z/2\mathbb Z$. Note that the natural surjection 
$\pi_1(X_{\reg})\twoheadrightarrow \Hol(\Xr,g)_x/\Hol^\circ(\Xr,g)_x$ together with the fact that
$(\gamma_{|\gamma^{-1}(X_{\reg})})^*g$ is flat yields a surjective map
$\mathbb Z/2\mathbb Z \twoheadrightarrow \Hol(\Xr,g)_x$. Suppose that $\Hol(\Xr,g)_x$ is trivial. Then, by Bochner principle, one would get a non-zero element $\sigma \in H^0(X,\Omega^{[1]}_X)$, and therefore a non-zero global $1$-form on $A$ invariant under the involution $a \mapsto -a$, yielding a contradiction. Thus
$$\Hol(\Xr,g)_x\cong \mathbb Z/2\mathbb Z.$$

As for the fundamental group, $X$ is simply-connected by Lefschetz theorem. Moreover, a degree two étale cover of $X_{\reg}$ is isomorphic to the complement of $16$ points in an abelian surface, therefore the fundamental group of this cover is $\mathbb Z^4$. 
\end{exmp}

\begin{exmp}[Symmetric square of a $K3$ surface]
This following example was already exhibited in \cite[Example 8.6]{GKP} as a variety with stable but not strongly stable tangent sheaf. 
Let $S$ be a $K3$ surface, and let $X:=S\times S/\langle i \rangle$ where $i$ is the natural involution
$(s_1,s_2)\mapsto (s_2,s_1)$
of $S\times S$. Recall from \cite[Section 6]{beauville83} that the Hilbert scheme $S^{[2]}$ parametrizing length $2$ zero-dimensional subschemes on $S$ is an irreducible symplectic manifold which admits a birational crepant morphism $S^{[2]}\to X$.
Then the following holds.

\medskip

\begin{itemize}
\item The variety $X$ admits a Kähler deformation to a smooth irreducible symplectic variety. 
\item The tangent sheaf $X$ is not strongly stable. 
\item Let $x \in X_{\reg}$. There exists a Ricci-flat Kähler metric $g$ on $X_{\reg}$ such that 
$$\Hol(X_{\reg},g)_x\cong \big(\SU(2)\times \SU(2)\big) \rtimes \mathbb Z/2\mathbb Z \quad \mbox{and} \quad \Hol^{\circ}(X_{\reg},g)_x\cong\SU(2)\times \SU(2).$$
\item We have $\pi_1(X)=\{1\}$ and $\pi_1(X_{\reg})\cong \mathbb Z/2\mathbb Z$. 
\end{itemize}

\medskip

The claim on the existence of a smoothing follows from the existence of the symplectic resolution 
$S^{[2]}\to X$ combined with a theorem of Namikawa \cite[Corollary 2]{Namikawa06} (see also Section~\ref{ex:symp}). 

The tangent sheaf $T_X$ is not strongly stable as $X$ admits a quasi-étale cover $\gamma\colon S \times S\to X$ of degree two and $\sT_{S\times S}$ is obviously not stable. 

Let us consider the two projections $p_i\colon S\times S\to S$ with $i=1,2$.
Given a Ricci-flat K\"ahler metric $g_S$ on $S$, $p_1^*g_S+p_2^*g_S$ defines a Kähler Ricci-flat metric on $S\times S$ that descends to a Ricci-flat Kähler metric $g$ on $X_{\reg}$. 
Let $y$ be a point on $S \times S$ such that $\gamma(y)=x$.
As the restricted holonomy is preserved by passing to an \'etale cover, we must have 
$$\Hol^{\circ}(X_{\reg},g)_x\cong \Hol^{\circ}(S\times S\smallsetminus \Delta,\gamma^*g)_{y}=\SU(2)\times \SU(2)$$
where $\Delta \subset S \times S$ denotes the diagonal.

As in the previous example, we have a surjective map 
$\mathbb Z/2\mathbb Z \twoheadrightarrow \Hol(\Xr,g)_x/\Hol^\circ(\Xr,g)_x$. Suppose that $\Hol(\Xr,g)_x^\circ=\Hol(\Xr,g)_x$. Then, by the Bochner principle, one would get two linearly independent reflexive $2$-forms on $X$, hence two independent $2$-forms on $S\times S$ invariant by the involution, yielding a contradiction. This shows that
$$\Hol(X_{\reg},g)_x\cong \big(\SU(2)\times \SU(2)\big) \rtimes \mathbb Z/2\mathbb Z.$$ 

By a theorem of Armstrong (see \cite{armstrong}), we have $\pi_1(X) =\{1\}$. As for $X_{\reg}$, it admits a degree two cover from $S\times S\smallsetminus \Delta$. Since $\codim \Delta =2$ and $S\times S$ is simply-connected, we conclude that $S\times S\smallsetminus \Delta$ is simply-connected as well, and hence 
$\pi_1(X_{\reg})\cong \mathbb Z/2\mathbb Z$.

\begin{rem}
As already mentioned,
Namikawa's theorem \cite[Corollary~2]{Namikawa06} provides smoothings by Kähler manifolds which are not projective. We do not know whether $X$ admits a projective smoothing by irreducible symplectic varieties.
\end{rem}
\end{exmp}

\providecommand{\bysame}{\leavevmode\hbox to3em{\hrulefill}\thinspace}
\providecommand{\MR}{\relax\ifhmode\unskip\space\fi MR }
\providecommand{\MRhref}[2]{%
  \href{http://www.ams.org/mathscinet-getitem?mr=#1}{#2}
}
\providecommand{\href}[2]{#2}


\begin{thebibliography}{KKMSD73}

\bibitem[Arm82]{armstrong}
M.~A. Armstrong, \emph{Calculating the fundamental group of an orbit space},
  Proc. Amer. Math. Soc. \textbf{84} (1982), no.~2, 267--271.

\bibitem[{Art}76]{artin_def_sing}
Michael {Artin}, \emph{{Lectures on deformations of singularities. Notes by C.
  S. Seshadri, Allen Tannenbaum}}, {Lectures on Mathematics and Physics. 54.
  Bombay: Tata Institute of Fundamental Research. II, 127 p. (1976)}, 1976.

\bibitem[BCHM10]{bchm}
Caucher Birkar, Paolo Cascini, Christopher~D. Hacon, and James McKernan,
  \emph{Existence of minimal models for varieties of log general type}, J.
  Amer. Math. Soc. \textbf{23} (2010), no.~2, 405--468.

\bibitem[Bea83]{beauville83}
Arnaud Beauville, \emph{Vari\'et\'es {K}\"ahleriennes dont la premi\`ere classe
  de {C}hern est nulle}, J. Differential Geom. \textbf{18} (1983), no.~4,
  755--782 (1984).

\bibitem[Bes87]{Besse}
Arthur~L. Besse, \emph{Einstein manifolds}, Ergebnisse der Mathematik und ihrer
  Grenzgebiete (3) [Results in Mathematics and Related Areas (3)], vol.~10,
  Springer-Verlag, Berlin, 1987.

\bibitem[BL04]{birkenhake_lange}
Christina {Birkenhake} and Herbert {Lange}, \emph{{Complex abelian varieties.
  2nd augmented ed.}}, 2nd augmented ed. ed., Berlin: Springer, 2004.

\bibitem[BLR90]{neron_models}
Siegfried Bosch, Werner L\"utkebohmert, and Michel Raynaud, \emph{N\'eron
  models}, Ergebnisse der Mathematik und ihrer Grenzgebiete (3) [Results in
  Mathematics and Related Areas (3)], vol.~21, Springer-Verlag, Berlin, 1990.

\bibitem[Bou61]{bourbaki3-4}
N.~Bourbaki, \emph{\'el\'ements de math\'ematique. {F}ascicule {XXVIII}.
  {A}lg\`ebre commutative. {C}hapitre 3: {G}raduations, filtrations et
  topologies. {C}hapitre 4: {I}d\'eaux premiers associ\'es et d\'ecomposition
  primaire}, Actualit\'es Scientifiques et Industrielles, No. 1293, Hermann,
  Paris, 1961.

\bibitem[Bri10]{brion_action}
Michel Brion, \emph{Some basic results on actions of nonaffine algebraic
  groups}, Symmetry and spaces, Progr. Math., vol. 278, Birkh\"auser Boston,
  Inc., Boston, MA, 2010, pp.~1--20.

\bibitem[BS76]{banica}
Constantin B\u{a}nic\u{a} and Octavian St\u{a}n\u{a}\c{s}il\u{a},
  \emph{Algebraic methods in the global theory of complex spaces}, Editura
  Academiei, Bucharest; John Wiley \& Sons, London-New York-Sydney, 1976,
  Translated from the Romanian.

\bibitem[Con00]{conrad}
Brian Conrad, \emph{Grothendieck duality and base change}, Lecture Notes in
  Mathematics, vol. 1750, Springer-Verlag, Berlin, 2000.

\bibitem[DG67]{dieudonne_grothendieck}
J.~Dieudonn\'e and A.~Grothendieck, \emph{Crit\'eres diff\'erentiels de
  r\'egularit\'e pour les localis\'es des alg\`ebres analytiques}, J. Algebra
  \textbf{5} (1967), 305--324.

\bibitem[Dru16]{bobo}
St{\'e}phane Druel, \emph{A decomposition theorem for singular spaces with
  trivial canonical class of dimension at most five}, Preprint {\tt
  arXiv:1606.09006v2}, 2016.

\bibitem[DS14]{DS}
S.~Donaldson and S.~Sun, \emph{{Gromov-Hausdorff limits of K{\"a}hler manifolds
  and algebraic geometry}}, Acta Math. \textbf{213} (2014), no.~1, 63--106.

\bibitem[EGZ09]{EGZ}
Philippe {Eyssidieux}, Vincent {Guedj}, and Ahmed {Zeriahi}, \emph{{Singular
  K\"ahler-Einstein metrics}}, {J. Am. Math. Soc.} \textbf{22} (2009), no.~3,
  607--639.

\bibitem[GGK17]{GGK}
Daniel Greb, Henri Guenancia, and Stefan Kebekus, \emph{Klt varieties with
  trivial canonical class: holonomy, differential forms, and fundamental
  groups}, Preprint \href{http://arxiv.org/abs/1704.01408}{arXiv:1704.01408},
  2017.

\bibitem[GKKP11]{GKKP}
Daniel Greb, Stefan Kebekus, S{\'a}ndor~J. Kov{\'a}cs, and Thomas Peternell,
  \emph{Differential forms on log canonical spaces}, Publ. Math. Inst. Hautes
  {\'E}tudes Sci. (2011), no.~114, 87--169. \MR{2854859}

\bibitem[GKP11]{gkp_bo_bo}
Daniel Greb, Stefan Kebekus, and Thomas Peternell, \emph{Singular spaces with
  trivial canonical class}, To appear in: Minimal models and extremal rays —
  proceedings of the conference in honor of Shigefumi Mori’s 60th birthday,
  Advanced Studies in Pure Mathematics, Kinokuniya Publishing House, Tokyo,
  2011.

\bibitem[GKP16]{GKP}
\bysame, \emph{Singular spaces with trivial canonical class}, Minimal Models
  and Extremal Rays, Kyoto, 2011, Adv. Stud. Pure Math., vol.~70, Mathematical
  Society of Japan, Tokyo, 2016, Preprint
  \href{http://arxiv.org/abs/1110.5250}{arXiv:1110.5250}, pp.~67--113.

\bibitem[GP11]{sga3}
Philippe Gille and Patrick Polo (eds.), \emph{Sch\'emas en groupes ({SGA} 3).
  {T}ome {I}. {P}ropri\'et\'es g\'en\'erales des sch\'emas en groupes},
  Documents Math\'ematiques (Paris) [Mathematical Documents (Paris)], 7,
  Soci\'et\'e Math\'ematique de France, Paris, 2011, S{\'e}minaire de
  G{\'e}om{\'e}trie Alg{\'e}brique du Bois Marie 1962--64. [Algebraic Geometry
  Seminar of Bois Marie 1962--64], A seminar directed by M. Demazure and A.
  Grothendieck with the collaboration of M. Artin, J.-E. Bertin, P. Gabriel, M.
  Raynaud and J-P. Serre, Revised and annotated edition of the 1970 French
  original.

\bibitem[Gro61]{ega11}
A.~Grothendieck, \emph{\'el\'ements de g\'eom\'etrie alg\'ebrique. {III}.
  \'etude cohomologique des faisceaux coh\'erents. {I}}, Inst. Hautes \'Etudes
  Sci. Publ. Math. (1961), no.~11, 167.

\bibitem[Gro65]{ega24}
\bysame, \emph{\'{E}l\'ements de g\'eom\'etrie alg\'ebrique. {IV}. \'{E}tude
  locale des sch\'emas et des morphismes de sch\'emas. {II}}, Inst. Hautes
  \'Etudes Sci. Publ. Math. (1965), no.~24, 231.

\bibitem[Gro66]{ega28}
\bysame, \emph{\'{E}l\'ements de g\'eom\'etrie alg\'ebrique. {IV}. \'{E}tude
  locale des sch\'emas et des morphismes de sch\'emas. {III}}, Inst. Hautes
  \'Etudes Sci. Publ. Math. (1966), no.~28, 255.

\bibitem[Gro67]{ega32}
\bysame, \emph{\'el\'ements de g\'eom\'etrie alg\'ebrique. {IV}. \'etude locale
  des sch\'emas et des morphismes de sch\'emas {IV}}, Inst. Hautes \'Etudes
  Sci. Publ. Math. (1967), no.~32, 361.

\bibitem[Gro95a]{fga232}
Alexander Grothendieck, \emph{Technique de descente et th\'eor\`emes
  d'existence en g\'eom\'etrie alg\'ebrique. {V}. {L}es sch\'emas de {P}icard:
  th\'eor\`emes d'existence}, S\'eminaire {B}ourbaki, {V}ol.\ 7, Soc. Math.
  France, Paris, 1995, pp.~Exp.\ No.\ 232, 143--161.

\bibitem[Gro95b]{fga221}
\bysame, \emph{Techniques de construction et th\'eor\`emes d'existence en
  g\'eom\'etrie alg\'ebrique. {IV}. {L}es sch\'emas de {H}ilbert}, S\'eminaire
  {B}ourbaki, {V}ol.\ 6, Soc. Math. France, Paris, 1995, pp.~Exp.\ No.\ 221,
  249--276.

\bibitem[Gro05]{sga2}
\bysame, \emph{Cohomologie locale des faisceaux coh\'erents et th\'eor\`emes de
  {L}efschetz locaux et globaux ({SGA} 2)}, Documents Math\'ematiques (Paris)
  [Mathematical Documents (Paris)], 4, Soci\'et\'e Math\'ematique de France,
  Paris, 2005, S{\'e}minaire de G{\'e}om{\'e}trie Alg{\'e}brique du Bois Marie,
  1962, Augment{\'e} d'un expos{\'e} de Mich{\`e}le Raynaud. [With an
  expos{\'e} by Mich{\`e}le Raynaud], With a preface and edited by Yves Laszlo,
  Revised reprint of the 1968 French original.

\bibitem[Har77]{hartshorne77}
Robin Hartshorne, \emph{Algebraic geometry}, Springer-Verlag, New York, 1977,
  Graduate Texts in Mathematics, No. 52.

\bibitem[Har80]{hartshorne80}
\bysame, \emph{Stable reflexive sheaves}, Math. Ann. \textbf{254} (1980),
  no.~2, 121--176.

\bibitem[Kal01]{Kaledin}
Dmitry Kaledin, \emph{{Symplectic resolutions: deformations and birational
  maps}}, Preprint \href{http://arxiv.org/abs/0012008}{arXiv:0012008}, 2001.

\bibitem[Kar00]{karu_stable}
Kalle Karu, \emph{Minimal models and boundedness of stable varieties}, J.
  Algebraic Geom. \textbf{9} (2000), no.~1, 93--109.

\bibitem[{Kaw}85]{kawamata85}
Yujiro {Kawamata}, \emph{{Minimal models and the Kodaira dimension of algebraic
  fiber spaces}}, {J. Reine Angew. Math.} \textbf{363} (1985), 1--46.

\bibitem[KKMSD73]{kkmsd}
G.~Kempf, Finn~Faye Knudsen, D.~Mumford, and B.~Saint-Donat, \emph{Toroidal
  embeddings. {I}}, Lecture Notes in Mathematics, Vol. 339, Springer-Verlag,
  Berlin-New York, 1973.

\bibitem[KM92]{kollar_mori_flips}
J{\'a}nos Koll{\'a}r and Shigefumi Mori, \emph{Classification of
  three-dimensional flips}, J. Amer. Math. Soc. \textbf{5} (1992), no.~3,
  533--703.

\bibitem[KM98]{kollar_mori}
\bysame, \emph{Birational geometry of algebraic varieties}, Cambridge Tracts in
  Mathematics, vol. 134, Cambridge University Press, Cambridge, 1998, With the
  collaboration of C. H. Clemens and A. Corti, Translated from the 1998
  Japanese original.

\bibitem[KMM87]{kmm}
Yujiro Kawamata, Katsumi Matsuda, and Kenji Matsuki, \emph{Introduction to the
  minimal model problem}, Algebraic geometry, {S}endai, 1985, Adv. Stud. Pure
  Math., vol.~10, North-Holland, Amsterdam, 1987, pp.~283--360.

\bibitem[Kol86]{kollar_higher2}
J{\'a}nos Koll{\'a}r, \emph{Higher direct images of dualizing sheaves. {II}},
  Ann. of Math. (2) \textbf{124} (1986), no.~1, 171--202.

\bibitem[Kol93]{Kollar93}
\bysame, \emph{Shafarevich maps and plurigenera of algebraic varieties},
  Invent. Math. \textbf{113} (1993), no.~1, 177--215.

\bibitem[Kol97]{kollar97}
\bysame, \emph{Singularities of pairs}, Algebraic geometry---Santa Cruz 1995,
  Proc. Sympos. Pure Math., vol.~62, AMS, 1997, pp.~221--287.

\bibitem[Laz04]{lazarsfeld1}
Robert Lazarsfeld, \emph{Positivity in algebraic geometry. {I}}, Ergebnisse der
  Mathematik und ihrer Grenzgebiete. 3. Folge. A Series of Modern Surveys in
  Mathematics [Results in Mathematics and Related Areas. 3rd Series. A Series
  of Modern Surveys in Mathematics], vol.~48, Springer-Verlag, Berlin, 2004,
  Classical setting: line bundles and linear series.

\bibitem[LWX14]{LWX}
Chi Li, Xiaowei Wang, and Chenyang Xu, \emph{{On proper moduli spaces of
  smoothable Kähler-Einstein Fano varieties}}, Preprint
  \href{http://arxiv.org/abs/1411.0761v3}{arXiv:1411.0761}, 2014.

\bibitem[MFK94]{git}
D.~Mumford, J.~Fogarty, and F.~Kirwan, \emph{Geometric invariant theory}, third
  ed., Ergebnisse der Mathematik und ihrer Grenzgebiete (2) [Results in
  Mathematics and Related Areas (2)], vol.~34, Springer-Verlag, Berlin, 1994.

\bibitem[Nak04]{nakayama04}
Noboru Nakayama, \emph{Zariski-decomposition and abundance}, MSJ Memoirs,
  vol.~14, Mathematical Society of Japan, Tokyo, 2004.

\bibitem[Nam94]{Namikawa94}
Yoshinori Namikawa, \emph{On deformations of calabi-yau 3-folds with terminal
  singularities}, Topology \textbf{33} (1994), no.~3, 429--446.

\bibitem[Nam01]{Namikawa01}
\bysame, \emph{Deformation theory of singular symplectic {$n$}-folds}, Math.
  Ann. \textbf{319} (2001), no.~3, 597--623.

\bibitem[Nam06]{Namikawa06}
\bysame, \emph{On deformations of {$\Bbb Q$}-factorial symplectic varieties},
  J. Reine Angew. Math. \textbf{599} (2006), 97--110.

\bibitem[NS95]{NS95}
Yoshinori Namikawa and J.~H.~M. Steenbrink, \emph{Global smoothing of
  {C}alabi-{Y}au threefolds}, Invent. Math. \textbf{122} (1995), no.~2,
  403--419.

\bibitem[RZ11a]{RZ}
Xiaochun Rong and Yuguang Zhang, \emph{Continuity of extremal transitions and
  flops for {C}alabi-{Y}au manifolds}, J. Differential Geom. \textbf{89}
  (2011), no.~2, 233--269, Appendix B by Mark Gross.

\bibitem[RZ11b]{RZ0}
Wei-Dong Ruan and Yuguang Zhang, \emph{Convergence of {C}alabi-{Y}au
  manifolds}, Adv. Math. \textbf{228} (2011), no.~3, 1543--1589.

\bibitem[Sch71]{Sch71}
Michael Schlessinger, \emph{Rigidity of quotient singularities}, Invent. Math.
  \textbf{14} (1971), 17--26.

\bibitem[Sch88]{Schoen88}
Chad Schoen, \emph{On fiber products of rational elliptic surfaces with
  section}, Math. Z. \textbf{197} (1988), no.~2, 177--199.

\bibitem[Ser01]{serre}
Jean-Pierre Serre, \emph{Expos\'es de s\'eminaires (1950-1999)}, Documents
  Math\'ematiques (Paris) [Mathematical Documents (Paris)], vol.~1, Soci\'et\'e
  Math\'ematique de France, Paris, 2001.

\bibitem[sga03]{sga1}
\emph{Rev\^etements \'etales et groupe fondamental ({SGA} 1)}, Documents
  Math\'ematiques (Paris) [Mathematical Documents (Paris)], 3, Soci\'et\'e
  Math\'ematique de France, Paris, 2003, S{\'e}minaire de g{\'e}om{\'e}trie
  alg{\'e}brique du Bois Marie 1960--61. [Algebraic Geometry Seminar of Bois
  Marie 1960-61], Directed by A. Grothendieck, With two papers by M. Raynaud,
  Updated and annotated reprint of the 1971 original [Lecture Notes in Math.,
  224, Springer, Berlin].

\bibitem[SSY16]{SSY}
Cristiano Spotti, Song Sun, and Chengjian Yao, \emph{Existence and deformations
  of {K}\"ahler--{E}instein metrics on smoothable {$\Bbb{Q}$}-{F}ano
  varieties}, Duke Math. J. \textbf{165} (2016), no.~16, 3043--3083.

\bibitem[Tak03]{Takayama2003}
Shigeharu Takayama, \emph{Local simple connectedness of resolutions of
  log-terminal singularities}, Internat. J. Math. \textbf{14} (2003), no.~8,
  825--836.

\bibitem[Yau78]{Yau78}
Shing-Tung Yau, \emph{{On the Ricci curvature of a compact K{\"a}hler manifold
  and the complex Monge-Amp{\`e}re equation. I.}}, Commun. Pure Appl. Math.
  \textbf{31} (1978), 339--411.

\end{thebibliography}
\end{document}